\newtheorem{theorem}{Theorem}[section]
\newtheorem{lemma}[theorem]{Lemma}
\newtheorem{proposition}[theorem]{Proposition}
\newtheorem{example}[theorem]{Example}
\theoremstyle{definition}
\newtheorem{remark}[theorem]{Remark}
\newtheorem*{remark*}{Remark}
\newtheorem{definition}[theorem]{Definition}
\newtheorem{definitions}[theorem]{Definitions}
\newtheorem{remarks}[theorem]{Remarks}
\def\la{\lambda}
\def\La{\Lambda}
\def\N{\mathbb N}
\def\E{\mathcal E}
\newcommand{\NN}{\mathbb{N}}
\newcommand{\ZZ}{\mathbb{Z}}
\newcommand{\RR}{\mathbb{R}}
\newcommand{\Cc}{\mathcal{C}}
\newcommand{\F}{\mathcal{F}}
\newcommand{\G}{\mathcal{G}}
\newcommand{\inv}{^{-1}}
\newcommand{\sse}{\subseteq}
\newcommand{\mc}{\mathcal}
\newcommand{\wt}[1]{\widetilde{#1}}
\DeclareMathOperator{\ev}{\text{ev}}
\def\r{{\textsf r}}
\def\l{{\textsf l}}
\def\t{{\textsf t}}
\def\b{{\textsf b}}
\title[Insplitting for 2-graphs and textile systems]{Relating insplittings of 2-graphs and of textile systems}
\author[Brooker]{Samantha Brooker}
\address{Samantha Brooker, Department of Mathematics, Virginia Tech, Blacksburg, VA 24061-1026, USA}\email{bsamantha24@vt.edu}
\author[Ganesan]{Priyanga Ganesan}
\address{Priyanga Ganesan, Department of Mathematical Sciences, University of California San Diego, La Jolla, CA 92093, USA.}\email{pganesan@ucsd.edu}
\author[Gillaspy]{Elizabeth Gillaspy}
\address{Elizabeth Gillaspy,  Department of Mathematical Sciences, University of Montana, Missoula, MT 59812-0864, USA} \email{elizabeth.gillaspy@mso.umt.edu} 
\author[Lin]{Ying-Fen Lin}
\address{Ying-Fen Lin, Mathematical Sciences Research Centre, Queen's University Belfast, Belfast BT7 1NN, United Kingdom}\email{y.lin@qub.ac.uk}
\author[Pask]{David Pask} 
\address{David Pask \\ School of Mathematics and
Applied Statistics  \\
The University of Wollongong\\
NSW  2522\\
AUSTRALIA} \email{david.a.pask@gmail.com}
\author[Plavnik]{Julia Plavnik}
\address{Julia Plavnik, Department of Mathematics, Indiana University, Bloomington, IN 47405, USA \\
and Department of Mathematics and Data Science,  Vrije Universiteit Brussel, 1050 Brussels, BELGIUM}
\email{jplavnik@iu.edu} 
\date{\today}
\subjclass[2010]{Primary 46L05; Secondary 37A55}
\keywords{Rank-$2$ graph, $2$-graph; Textile system; 2-dimensional shifts of finite type; Insplitting; $C^*$-algebra; Conjugacy}
\begin{document}

\tableofcontents

\begin{abstract}

    The graphical operation of insplitting is key to understanding conjugacy of shifts of finite type (SFTs) in both one and two dimensions. In this paper, we consider two approaches to studying 2-dimensional SFTs: textile systems and rank-2 graphs.      Nasu's textile systems describe all two-sided 2D SFTs up to conjugacy \cite{johnson-madden, Aso}, whereas the 2-graphs  (higher-rank graphs of rank 2) introduced by Kumjian and Pask yield associated $C^*$-algebras \cite{Yuxiang-Tang}. Both models have a naturally-associated notion of insplitting (introduced for textile systems in \cite{johnson-madden} and for 2-graphs in \cite{EFGGGP}).  We show   that these notions do not coincide, raising the question of whether insplitting a 2-graph induces a conjugacy of the associated one-sided 2-dimensional SFTs.

    Our first main result shows how to reconstruct 2-graph insplitting using textile-system insplits and inversions,   and consequently proves that  2-graph insplitting induces a conjugacy of dynamical systems. We also present several other facets of the relationship between 2-graph insplitting and textile-system insplitting.  Incorporating an insplit of the ``bottom'' graph of the textile system turns out to be key to this relationship.     By articulating the connection between operator-algebraic and dynamical notions of insplitting in two dimensions, this article lays the groundwork for a $C^*$-algebraic framework for classifying one-sided conjugacy in higher-dimensional SFTs.
\end{abstract}

\newpage
\maketitle
 
\section{Introduction}

The interplay between symbolic dynamics, directed graphs, and the $C^*$-algebras of directed graphs has long been a fruitful
area of research, revealing deep structural connections across these domains.  In this paper, we
explore the relationship between the dynamical notion of insplitting and the structure of the resulting $C^*$-algebras in two dimensions. This lays the groundwork for a $C^*$-algebraic framework for studying one-sided conjugacy
for higher-dimensional dynamical systems.  

While the research presented in 
this article was motivated by $C^*$-algebraic concerns, our main
theorems are phrased purely in dynamical terms; we relegate the $C^*$-algebraic considerations to
Appendix \ref{sec:$C^*$} and future papers.

In both one and two dimensions, shifts of finite type
(SFTs) are precisely those shifts that possess a graphical description. The representation of
1-dimensional SFTs via directed graphs has been known since the early development of the
field (cf.~\cite[Theorem 2.3.2]{lind-marcus}), and this correspondence has been highly successful in describing
conjugacy, flow equivalence and related invariants of dynamical systems. In
the 2-dimensional setting, textile systems (consisting of a pair of directed graphs ``woven'' together) serve a similar purpose. 
Introduced by Nasu in \cite{nasu}, textile systems were shown to model all 2-dimensional SFTs by Johnson and Madden in \cite[Proposition 2.3]{johnson-madden} (see also \cite[Theorem 4.1]{Aso}).
They also characterized conjugacy for SFTs via textile systems in \cite[Corollary 3.10]{johnson-madden} and  \cite[Theorem 3.1]{Aso}.  As in the 1-dimensional case, two 2-dimensional SFTs are conjugate---i.e., dynamically isomorphic---if and only if their textile systems can be transformed into one another via the graphical operations of insplitting, outsplitting and inversion (and their inverses).  (The operation of inversion interchanges the horizontal and vertical directions in the 2-dimensional SFT, and has no analogue in the 1-dimensional setting.)

Directed graphs and their higher-dimensional generalizations have also led to significant advances in the theory of $C^*$-algebras, in which $C^*$-algebraic versions of in- and out-splitting have played a major role. 
Directed graphs (or, equivalently, 1-dimensional SFTs)
yield key examples of $C^*$-algebras, which are tractable because their internal algebraic structure mirrors the structure
of the underlying graph (cf.~\cite{kpr, bhrs,raeburn-szyman}). As $C^*$-algebras are analytic objects with very few
{\em a priori} structural restrictions, the tight structural link between a directed graph and its $C^*$-algebra makes graph $C^*$-algebras a key source of examples for $C^*$-algebraists (cf.~\cite{hong-syman, mann-raeburn-sutherland}).
Indeed, the program of classifying $C^*$-algebras up to isomorphism (or the weaker invariant of Morita equivalence) has seen spectacular success for graph $C^*$-algebras \cite{sorensen-first, errs-cuntz, errs, aer-2}: two graph $C^*$-algebras are isomorphic if and only if their underlying directed graphs can be converted into each other via a finite list of moves, including insplitting (and outsplitting in the case of Morita equivalence).  Graph $C^*$-algebras and related constructions have also led to new insights in symbolic dynamics, by providing new characterizations of dynamical invariants like flow equivalence,
continuous orbit equivalence, one- and two-sided conjugacy, and eventual (one-sided) conjugacy, see
 \cite{cuntz-krieger, giordano-putnam-skau, matsumoto-matui, matsumoto, CEOR, aer-1, carlsen-rout, brix-carlsen, brix-carlsen-2, brix-balanced, ABCE, carlsen-doron-eilers}.

The tight structural link between a directed graph and its $C^*$-algebra also means that many interesting $C^*$-algebras, such as the rotation
algebras \cite{evans-sims} or any $C^*$-algebra with torsion $K_1$ group \cite{cuntz-K-thy, raeburn-szyman} cannot arise as graph
$C^*$-algebras. Higher-rank graphs, or $k$-graphs, were introduced by Kumjian and the fifth-named author in \cite{kp} to provide combinatorial models
for such $C^*$-algebras. 
Like their 1-dimensional cousins the directed graphs, $k$-graphs also have a strong link with dynamical systems, see \cite{KumjianPask3}. In \cite{Yuxiang-Tang}, Tang
established a bijection between 2-graphs (or 2-dimensional higher-rank graphs) and left-resolving (LR) textile systems. 
More generally, Carlsen and Rout initiated in \cite{carlsen-rout} the $C^*$-algebraic analysis of two-sided
conjugacy, eventual one-sided conjugacy, and continuous orbit equivalence for the dynamical
systems arising from $k$-graphs.

Unlike textile systems, 
$k$-graphs are categories, by definition \cite{kp}. That is, in a $k$-graph, concatenation of ``paths'' is really composition of morphisms in a category. For consistency, therefore, we think of paths in a $k$-graph as pointing right-to-left: two paths $p,q$ are composable with product $pq$ if $s(p) = r(q)$.  This convention unfortunately leaves us at odds with the literature on textile systems; we discuss the translation in Remark \ref{rmk:square-convention} below.

Both  graph $C^*$-algebras and $k$-graph $C^*$-algebras are built from the underlying one-sided SFT \cite{kprr, kp}.  This underlies the fact (discovered by Bates and the fifth-named author in \cite{bates-pask}) that insplitting\footnote{In \cite{bates-pask}, as in much of the dynamics literature, the roles of in- and out-splitting are reversed from the discussion in this paper. This is due to our view of $k$-graphs as categories, which leads to a different choice of convention between \cite{bates-pask} and the current paper (which follows \cite{kp}) in the definition of the graph $C^*$-algebra.} yields an isomorphism of graph $C^*$-algebras, while outsplitting yields only a Morita equivalence. Indeed, the asymmetry between the $C^*$-algebraic implications of in- and out-splitting persists for $k$-graph $C^*$-algebras \cite{EFGGGP, listhartke}. This is the main reason why we concern ourselves with one-sided shifts.

However, for 2-dimensional dynamical systems, Theorem \ref{thm:JM-insplit-not-LR} shows that these natural $C^*$-algebraic formulations of in- and out-splitting
do not agree with the textile-system in- and out-splitting of \cite{johnson-madden}. On the other hand, in the 1-dimensional setting, a dynamically-constructed isomorphism of $C^*$-algebras frequently implies that the underlying one-sided shift spaces are conjugate (cf.~\cite{cuntz-krieger, brix-carlsen, ABCE}). This raises the natural question of whether, in the 2-dimensional setting as well, the $C^*$-algebraic
definition of insplitting should yield a conjugacy of the associated one-sided SFTs. Equivalently, are the 2-dimensional versions of insplitting from \cite{EFGGGP}  and \cite{johnson-madden} related by more than mere nomenclature? 

Theorem \ref{thm:6.1} below gives an affirmative answer to this question. More
precisely, suppose $T$ is an LR textile system (Definition \ref{def:textile}); then Theorem \ref{thm:PST} shows that $T$
determines a 2-graph $\Lambda_T$ (Definition \ref{def:2graph}), and hence a $C^*$-algebra by \cite{kp}. Theorem \ref{thm:6.1}
says that a 2-graph insplit on $\Lambda_T$ (as in \cite{EFGGGP}) can be recreated via textile-system insplits
and inversions on $T$. In particular, the 2-graph insplitting of \cite{EFGGGP} yields a conjugacy of
one-sided, 2-dimensional shifts of finite type. Additionally, we present various constructions in section \ref{sec:JM-ghinsp} that recover the textile-system in-splitting \cite{johnson-madden} from a combination of directed graph in-splitting and 2-graph in-splitting.

\subsection*{Structure of the paper}
As one-sided 2-dimensional dynamics have not received as much attention in the literature as their two-sided 1-dimensional counterparts, we begin in Section \ref{sec:prelim} with definitions and basic results about these shifts.  In particular, we define conjugacy and higher block presentations for one-sided 2-dimensional SFTs and show that any conjugacy is induced by a factor map.

Section \ref{sec:textile} recalls the definition of a textile system, and in Section \ref{sec:catmodels} 2-graphs
are briefly introduced, as well as their associated dynamical systems. The link between 2-graphs and textile systems is established in Theorem \ref{thm:PST}.
We review the definitions of insplitting for textile systems and for 2-graphs in Section \ref{sec:2ginsplit}.

Our main results lie in Sections \ref{sec:ghinsp-JM} and \ref{sec:JM-ghinsp}. 
In addition to establishing, in Theorem \ref{thm:6.1}, that 2-graph insplitting can be understood via textile system insplitting, 
we provide several perspectives on the relationship between 2-graph insplitting and textile system insplitting.  
Theorem \ref{thm:lr-insplit=2g-insplit} identifies when a single textile-system insplitting could alternately be interpreted as a 2-graph insplitting, and Theorem \ref{thm:Priyanga} shows how to interpret 2-graph insplitting as a textile insplit together with an extra insplit of the ``bottom'' graph $E$ from the textile system. We also demonstrate that certain insplittings of the base graph simultaneously yield both types of insplitting. Indeed, these different perspectives are equivalent, as we establish in Section \ref{subsec: unification}. We conclude the paper with a brief discussion of the $C^*$-algebraic consequences of  2-graph in- and out-splitting, as well as the dynamical consequences of 2-graph outsplitting, in Appendix \ref{sec:$C^*$}.

\subsection*{Acknowledgments}
This research was supported by the Australian Research Council, Banff Research Station, Alexander von Humboldt Foundation (to J.P.), International Centre for Mathematical Sciences, the US National Science Foundation (grant DMS-1800749 to E.G. and grant DMS-2146392 to J.P.), and the Simons Foundation (Award 889000 as part of the Simons Collaboration on Global Categorical Symmetries to J.P.)

The fifth author would like to thank Aidan Sims, Yuxiang Tang and Samuel Webster for interesting conversations about textile systems. %

This research was initiated at the Women in Operator Algebras II workshop at BIRS, and continued during a Research in Groups stay at ICMS.  We thank both of these organizations for the congenial research atmosphere they provided.

\section{Two-dimensional non-invertible symbolic dynamics}
\label{sec:prelim}

We use the convention that $0 \in \NN$ (see Remark~\ref{rmk:niscat} below).
The standard generators of $\NN^2$ (resp.\ $\ZZ^2$) are denoted $\varepsilon_1,\varepsilon_2$, and we write $n_i$ for the $i^{\textrm{th}}$ coordinate of $n \in \NN^2$. We define a partial order on $\NN^2$ by $m\leq n$ if $m_i \leq n_i$ for $i=1, 2$.
For $m,n \in \mathbb{N}^2$, we write $m \vee n$ for their coordinatewise maximum and $m \wedge n$ for their coordinatewise minimum. 

For completeness, we include a brief overview of two-dimensional one-sided shifts of finite type, as systematic treatments of this topic are scarce in the literature. We assume familiarity with the basic theory of one-dimensional shifts of finite type, as found in \cite{lind-marcus} and \cite{Kitchens}.

To fix notation, we begin with a discussion of one-dimensional dynamical systems.

\subsection{One-dimensional preliminaries}

\begin{definitions}
A \emph{directed graph} is a quadruple $E= (E^0,E^1,r,s)$, where $E^0, E^1$ are sets of vertices and edges, respectively, and $r, s: E^1 \to E^0$ are maps giving the range and source of each edge respectively. We say that $E$ is {\em source-free} or {\em has no sources} if $r$ is onto; it is \emph{essential} if both $r,s$ are onto.
\label{defs:directed}
\end{definitions} 

\begin{example}
\label{ex:bouquet}
From a finite set $X$, we construct an essential (directed) graph $B_X$, the ``bouquet of $X$ loops''. This graph has one vertex and $B_X^1 = \{ e_x: x \in X\}$; the range and source maps are uniquely defined.
\end{example}

For a directed graph $E$ and $n \in \NN$, $E^n$ denotes the directed paths of length $n$ in $E$:
\begin{equation}
    \label{eq:def-En}
    E^n = \{ e_1 \cdots e_n: e_i \in E^1, \ r(e_i) = s(e_{i-1}) \ \text{for all }i\}.
\end{equation}
We denote the collection of finite directed paths in $E$ by $E^* := \bigcup_{n \ge 0} E^n$. The range and source maps $r, s: E^1 \to E^0$ extend naturally to $E^*$ 
(where $r(v)=s(v)=v$ for all $v \in E^0$). 
Given $v,w \in E^0$ and $F \subseteq E^*$ define $vF := r^{-1}(v) \cap F$, $Fw := s^{-1}(w) \cap F$, and $vFw := vF \cap Fw$. 

\begin{remark} \label{rmk:1graph2b}
The set $E^*$ becomes a category where the objects are the vertices $E^0$ and the domain of a morphism (path) is its source 
and the range of a morphism (path) is its categorical range. 
In \cite{maclane} this is called the \emph{free (or path) category generated by $E$} since there are no relations required.
\end{remark}

\begin{remark} \label{rmk:niscat}
One can identify $\NN$ with the path category of the directed graph $B_X$ where $X$ has one element.
\end{remark}

Let $E=(E^0,E^1,r,s)$ be an essential directed graph. Then,
as in \cite[Definition 2.2.5 and Section 13.8]{lind-marcus}, we define
\begin{equation}\label{eq:inf-path-space-defn}
\textsf{X}_E^+ = \{ x=(x_n) \in (E^1)^{\NN} : r(x_n) = s(x_{n-1}) \text{ for all } n \in \NN \}
\end{equation}
to be the one-sided, one-dimensional shift of finite type associated to the directed graph $E$.

\subsection{One-sided, two-dimensional shifts of finite type}

The following definition is adapted from \cite[\S 2]{Schmidt}.

Let $\mathcal{A}$ be a finite set (alphabet).  Define
$\mathcal{A}^{\NN^2} = \{ x : \NN^2 \to \mathcal{A}\}$. A typical point $x \in \mathcal{A}^{\NN^2}$ is written as $x= ( x_n : n \in \NN^2)$, where $x_n \in \mathcal{A}$ denotes the value of $x$ at $n \in \NN^2$. 

We equip $\mathcal A^{\NN^2}$ with the product topology.  That is, the basic open sets of $\mathcal A^{\NN^2}$ are the {\em cylinder sets} $Z(x[0,m]),$ for $x \in \mathcal A^{\NN^2}$ and $m \in \NN^2$, defined by
\[ Z(x[0,m]) = \{ y \in \mathcal A^{\NN^2} : x_n = y _n \text{ for all } 0 \leq n \leq m\}.\]
There is an action $\sigma$ of $\NN^2$ on $\mathcal{A}^{\NN^2}$ defined by
\begin{equation} \label{eq:shift}
( \sigma^m (x))_n = x_{n+m}
\end{equation}
for every $x=(x_n) \in \mathcal{A}^{\NN^2}$ and $m \in \NN^2$. 

\begin{definitions}
A subset $X \subset \mathcal{A}^{\NN^2}$ is called \emph{shift-invariant} if $\sigma^m(X) =X$ for every $m \in \NN^2$, and a closed shift-invariant subset $X \subset \mathcal{A}^{\NN^2}$ is called a \emph{(one-sided) subshift}. If $X \subset \mathcal{A}^{\NN^2}$ is a subshift then we write $\sigma = \sigma_X$ for the restriction of the shift action \eqref{eq:shift} to $X$, and we say that $(X,\sigma)$ is a \emph{shift space}. 

For any subset $S \subset \NN^2$, we denote by $\pi_S : \mathcal{A}^{\NN^2} \to \mathcal{A}^S$ the projection map which restricts every $x \in \mathcal{A}^{\NN^2}$ to $S$. A subshift $X \subset \mathcal{A}^{\NN^2}$ is a {\em one-sided subshift of finite type (SFT)} if there exist a finite set $Z \subset \NN^2$ and a finite collection $\mathcal F$ of functions $f: Z \to \mathcal A$, such that
\begin{equation} \label{eq:sftdef}
X = \{ x \in \mathcal{A}^{\NN^2} : \pi_Z(\sigma^m(x)) \in \mathcal F
\text{ for every } m \in \NN^2 \} .
\end{equation}
\end{definitions}

\noindent
This definition resembles the ``allowed blocks" model of shift spaces described in \cite[Section 2.1]{lind-marcus}: A configuration $x \in \mathcal{A}^{\NN^2}$ belongs to the shift space $X$ if whenever $x$ is restricted to some translate of $Z$ the values of $\mathcal{A}$ found there match those allowed in $Z$.

\subsection{Sliding-block codes, conjugacy, inversion} \label{sec:sbc}

\begin{definition}[Blocks, block map]
Let $m \in \NN^2$ and 
$X \subset \mathcal{A}^{\NN^2}$ be a one-sided SFT. The collection of $m$-\emph{blocks} in $X$ is defined by
\[
B_m (X) = \{ x[a,a+m] : 
x \in X , 
a \in \NN^2 \} .
\]
\end{definition}

\noindent
Without loss of generality $B_0(X)= \mathcal{A}$.

The following is adapted from Kitchens \cite[page 25]{Kitchens}. Let $X$ be a SFT over the alphabet $\mathcal A$, and $Y$ be a SFT over the alphabet $\mathcal B$. Fix $\ell \in \NN^2$. An $\ell$-\emph{block map} $\phi: (X, \sigma_X) \to (Y, \sigma_Y)$ is determined by a map $\Phi : B_\ell (X) \to B_0 (Y)$ that is consistent with $X$ and $Y$ in the following sense: For every $x \in X, a \in \NN^2$, and $i=1,2$, we have
\begin{equation}
    \label{eq:l-block-map}
\Phi ( x [a,a+\ell] ) \Phi ( x[a+\varepsilon_i , a+\ell+\varepsilon_i ] ) \in B_{\varepsilon_i} (Y), 
\end{equation}

\noindent
then 
the associated $\ell$-block map $\phi  : (X,\sigma_X) \to (Y,\sigma_Y)$
is given by
\[
(\phi (x))_n = \Phi(x[ n, n+ \ell]) .
\]

\begin{lemma}
    \label{lem:block-map-cts}
    Every block map $\phi: (X, \sigma_X) \to (Y, \sigma_Y)$ is continuous and shift commuting, that is, $\sigma_Y \circ \phi = \phi \circ \sigma_X.$
\end{lemma}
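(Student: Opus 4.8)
The plan is to verify the two claimed properties directly from the defining formula $(\phi(x))_n = \Phi(x[n,n+\ell])$, handling shift-commutation and continuity separately; both reduce to elementary unwindings of the definitions together with the fact that $\mathcal A$ and $\mathcal B$ are finite (hence discrete).

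For shift-commutation, I would fix $x \in X$, $m \in \NN^2$, and $n \in \NN^2$, and evaluate each side at the coordinate $n$. On one hand, $(\sigma_Y^m(\phi(x)))_n = (\phi(x))_{n+m} = \Phi(x[n+m,n+m+\ell])$ by \eqref{eq:shift} and the definition of $\phi$. On the other hand, $(\phi(\sigma_X^m(x)))_n = \Phi((\sigma_X^m x)[n,n+\ell])$. The key observation is the elementary block identity
\[
(\sigma_X^m x)[n,n+\ell] = x[n+m,n+m+\ell],
\]
which holds because the value of $\sigma_X^m x$ at position $n+j$ equals $x_{n+j+m}$ for every $0 \le j \le \ell$, so the two blocks agree entrywise and are therefore equal as elements of $B_\ell(X)$. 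Applying $\Phi$ yields equality of the two expressions, and since $n$ was arbitrary we conclude $\sigma_Y^m \circ \phi = \phi \circ \sigma_X^m$. (It would suffice to treat $m = \varepsilon_1, \varepsilon_2$, but the computation is uniform in $m$, so I would just carry it out once.)

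For continuity, I would use that the product topology on $\mathcal B^{\NN^2}$ is generated by the subbasic sets $U_{n,b} = \{y : y_n = b\}$ for $n \in \NN^2$ and $b \in \mathcal B$, each of which is open since $\mathcal B$ is discrete; hence it suffices to show that each $\phi^{-1}(U_{n,b})$ is open in $X$. Unwinding the definition gives $\phi^{-1}(U_{n,b}) = \{x \in X : \Phi(x[n,n+\ell]) = b\}$, a condition depending only on the finitely many coordinates $x_k$ with $n \le k \le n+\ell$. Writing $\Phi^{-1}(b) \subseteq B_\ell(X)$ for the finite (possibly empty) set of blocks sent to $b$, this preimage is the finite union $\bigcup_{w \in \Phi^{-1}(b)} \{x \in X : x[n,n+\ell] = w\}$, and each set in the union is the intersection with $X$ of the finitely many open conditions $\{x : x_k = w_{k-n}\}$, hence open. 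A finite union of open sets is open, so $\phi$ is continuous.

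The main obstacle here is bookkeeping rather than anything conceptual: the only points requiring care are the index shift in the block identity $(\sigma_X^m x)[n,n+\ell] = x[n+m,n+m+\ell]$ and the recognition that a ``dependence on finitely many coordinates'' translates, in the product topology, into a finite union of cylinder-type open sets. I would also note in passing that the formula for $\phi$ genuinely lands in $Y$ — this is precisely what the consistency condition \eqref{eq:l-block-map} guarantees — but since the lemma takes $\phi \colon X \to Y$ as given, this well-definedness is not part of what must be proved.
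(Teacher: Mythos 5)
Your proposal is correct and follows essentially the same route as the paper: shift-commutation by evaluating both sides coordinatewise via the identity $(\sigma_X^m x)[n,n+\ell] = x[n+m,n+m+\ell]$, and continuity by showing preimages of (sub)basic open sets are unions of cylinder-type sets. The only cosmetic differences are that the paper checks commutation just for the generators $\varepsilon_i$ and pulls back the basic cylinder sets $Z(y[0,n])$ rather than the subbasic sets $\{y : y_n = b\}$; neither change affects the substance of the argument.
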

\begin{proof}
Suppose $\phi: X \to Y$ is an $\ell$-block map.  Fix $n \in \NN^2$ and consider a basic open set $Z(y[0, n])$ in $Y$. 
Note that 
\begin{align*} 
\phi^{-1}(Z(y[0,n]))&  = \{ x \in X : \Phi(x[j, j+\ell]) = y_j \ \text{for all } 0\leq j\leq n\}\\
&= \bigcup \{ Z(x[0, n+\ell]): \Phi(x[j, j+\ell]) = y_j \ \text{for all } 0\leq j\leq n\},
\end{align*}
being a union of cylinder sets, is open. Hence $\phi$ is continuous.

To see that $\sigma_Y \circ \phi = \phi \circ \sigma_X,$ let $x\in X$, $n \in \NN^2$ and  $i= 1,2.$  Then 
\[ \sigma_Y^{\varepsilon_i}(\phi(x))_n = \phi(x)_{n+\varepsilon_i} = \Phi(x[n+ \varepsilon_i, n + \varepsilon_i + \ell]), \]
while $\phi(\sigma^{\varepsilon_i}_X(x))_n = \phi((x_{k+\varepsilon_i})_k)_n = \Phi(x[n+\varepsilon_i, n+\varepsilon_i +\ell]).$
\end{proof}

The argument of \cite[Theorem 1.4.9]{Kitchens} generalizes to show that any continuous shift commuting map between shift spaces is a block map.
In particular, every conjugacy 
can be viewed as a block map.

\noindent
\textbf{Standing assumption}
In the same way that Lind and Marcus reduce arguments to using $1$-block maps for shifts of finite maps in \cite[\S 2]{lind-marcus}, we may do the same here:
 By considering the sizes of all the forbidden blocks and then applying a suitable higher block presentation (cf. \cite[p.27]{Kitchens}), we may assume that 
\begin{equation} \label{eq:bestF}
Z = \{ 0,1 \}^2 \subset \NN^2 
\end{equation}
in \eqref{eq:sftdef}.

\begin{definition}[Conjugacy]
\label{def:conjugacy}
 The shift spaces $(X,\sigma_X)$ and $(Y,\sigma_Y)$ are \emph{conjugate} if there is a homeomorphism $h : X \to Y$ satisfying 
 \[ h \circ \sigma_X = \sigma_Y \circ h.\]
 
\end{definition}

In two dimensions, the choice of horizontal and vertical directions is somewhat arbitrary. 
Hence (following \cite{johnson-madden}) we define an operation on a shift space which swaps these directions.

\begin{definition}[Inversion] \label{dfn:inversion}
Fix an alphabet $\mathcal A$.
The {\em inversion} of the shift space $(X, \sigma)$ is $(\theta(X), \sigma')$, where 
\begin{align}\label{eq:inversiondef}
    \theta(X)= \{x: \NN^2 \to \mathcal A \ |  \text{ there is }  y \in X 
   & \text{ such that } \\ 
   &\text{ for all } n= (n_1, n_2) \in \NN^2,
    x_{(n_1, n_2)} = y_{(n_2, n_1)}\}.
    \nonumber
\end{align} 
We write $x = \theta(y)$.  The shift map is also inverted in $\theta(X)$; that is, $\sigma'$ is given by 
 $$(\sigma')^{(m_1, m_2)}(\theta(y))_{(n_1, n_2)} := \theta(y)_{(n_1 + m_2, n_2 + m_1)} = y_{(n_2 + m_1, n_1 + m_2)}.$$

\end{definition}

For $x\in \theta(X)$, we write $x= \theta(y)$ for some $y\in X$, and observe that $X$ is a SFT if and only if $\theta(X)$ is.

\begin{lemma}
Let $(X, \sigma)$ be a shift space and $( \theta(X), \sigma' )$ be the inversion of $X$.  Then $\theta : X \to \theta(X)$ is a conjugacy.
\end{lemma}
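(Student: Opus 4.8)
The plan is to verify the three defining properties of a conjugacy for the map $\theta : X \to \theta(X)$: that it is a well-defined bijection, that it is a homeomorphism, and that it intertwines the two shift actions. The construction of $\theta(X)$ in Definition \ref{dfn:inversion} builds the intertwining relation directly into the definition of $\sigma'$, so the key technical work lies in establishing that $\theta$ is a homeomorphism, which I expect to be the main (though still routine) obstacle.

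First I would observe that $\theta$ is a bijection. By definition, $\theta(X)$ is precisely the image of $X$ under the coordinate-swapping map, so $\theta$ is surjective. For injectivity, note that if $\theta(y) = \theta(y')$ then for all $(n_1,n_2) \in \NN^2$ we have $y_{(n_2,n_1)} = y'_{(n_2,n_1)}$; since every element of $\NN^2$ arises as $(n_2,n_1)$ for a suitable choice of $(n_1,n_2)$, this forces $y = y'$. Thus $\theta$ is a well-defined bijection between $X$ and $\theta(X)$.

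Next I would show $\theta$ is continuous with continuous inverse. The coordinate swap $(n_1,n_2) \mapsto (n_2,n_1)$ induces a homeomorphism of the full product space $\mathcal A^{\NN^2}$ onto itself, since in the product topology a map is continuous exactly when each coordinate function is, and $\theta(y)_{(n_1,n_2)} = y_{(n_2,n_1)}$ depends continuously on a single coordinate of $y$. Concretely, one checks that $\theta$ pulls back cylinder sets to cylinder sets: the preimage of a basic open set $Z(x[0,m])$ in $\theta(X)$ corresponds, after swapping the two coordinate directions of the box $[0,m]$, to a cylinder set in $X$. The same argument applied to the swap in the opposite direction shows $\theta^{-1}$ is continuous, so $\theta$ is a homeomorphism onto its image $\theta(X)$.

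Finally, the intertwining relation $\theta \circ \sigma_X = \sigma' \circ \theta$ holds by construction: the formula for $\sigma'$ in Definition \ref{dfn:inversion} was defined precisely so that $(\sigma')^{(m_1,m_2)}(\theta(y))_{(n_1,n_2)} = y_{(n_2+m_1, n_1+m_2)}$, and evaluating $\theta(\sigma_X^{(m_1,m_2)}(y))$ at $(n_1,n_2)$ gives $(\sigma_X^{(m_1,m_2)}(y))_{(n_2,n_1)} = y_{(n_2+m_1, n_1+m_2)}$ as well. Since these agree for every $m, n \in \NN^2$, the map $\theta$ commutes with the shift actions in the sense required by Definition \ref{def:conjugacy}, completing the verification that $\theta$ is a conjugacy. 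The only point requiring care throughout is the bookkeeping of which coordinate is swapped with which; once that is fixed consistently, each step reduces to a direct computation.
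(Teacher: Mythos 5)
Your proposal is correct and follows essentially the same route as the paper's proof: a direct verification that $\theta$ is a bijection, the observation that cylinder sets pull back to cylinder sets (with the two coordinate directions of the box swapped) to get the homeomorphism, and the same coordinate computation showing $\theta \circ \sigma_X = \sigma' \circ \theta$. The only cosmetic difference is that the paper dispatches bijectivity by noting $\theta$ and $\theta^{-1}$ are given by the same formula, whereas you check injectivity and surjectivity separately.
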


\begin{proof}
Evidently, $\theta^{-1} $ and $\theta$ are given by the same formula; in particular, $\theta$ is a bijection. Moreover, $\theta \circ \sigma = \sigma' \circ \theta,$ since for any $m, n \in \NN^2,$
\begin{align*} 
(\sigma') ^{(m_1, m_2)}(\theta(y))_{(n_1, n_2)} &  =  \theta(y)_{(n_1 + m_2, n_2 + m_1)} = y_{(n_2 + m_1, n_1 + m_2)}, \text{ while }\\
  \theta (\sigma^{(m_1, m_2)}(y))_{(n_1, n_2)} & = (\sigma^{(m_1, m_2)}(y))_{(n_2, n_1)} = y_{(n_2 + m_1, n_1 + m_2)}.
  \end{align*}
  It is straightforward to see that $\theta$ is continuous; for instance, the inverse image of the cylinder set 
  $Z(x[(0,(m_1, m_2))])$ is 
  \begin{eqnarray*}
  \{ y \in X: y_{(n_2, n_1)} = x_{(n_1, n_2)} \text{ for all } n = (n_1, n_2) \text{ with } (n_1, n_2) \leq (m_1, m_2)\},
  \end{eqnarray*}
which is $Z(\theta(x)[(0, (m_2, m_1))])$, another cylinder set. We conclude that $\theta$ is a homeomorphism, which completes the proof.
\end{proof}

\section{Textile systems and their dynamical systems} \label{sec:textile}

Just as every one-dimensional SFT can be realized as the edge shift of a directed graph \cite[Theorem 2.3.2]{lind-marcus}, every 2-dimensional SFT can be realized using a pair of graphs woven together into the form of a {\em textile system}. Johnson and Madden establish in \cite[Proposition 2.3]{johnson-madden} that every (two-sided) $2$-dimensional shift of finite type is conjugate to a textile system SFT, by using a higher block argument similar to that given in Section~\ref{sec:sbc}. 

The definition that follows is an adaptation of the original definition given by \cite{nasu}.

\begin{definition}\label{def:textile}
A {\em textile system} $T=(p,q:F \to E)$ consists of two directed graphs $F, E$ and two graph homomorphisms $p, q: F \to E$ such that the  function $A : F^1 \to F^0 \times E^1 \times F^0 \times E^1$ given by
\[
 A(f) = (r(f), p(f), s(f), q(f)) 
\]
is injective. 
\end{definition}

\begin{remark}
In \cite{johnson-madden}, Johnson and Madden restrict their attention to textile systems where $E=B_X$ for some set $X$. We find this condition too restrictive for our aims and shall not enforce this restriction in this paper.   
\end{remark}

A \emph{square} in the textile system $T=(p,q :F \to E)$ is a four-sided object, whose edges are labeled using the edges of $E$  and vertices of $F$ as shown below. 
\begin{equation} \label{eq:square-labels} 
\begin{array}{l}
\begin{tikzpicture}[scale=1.3]
\node at (-3.3,0.7) {\text{The square $T_f$}};
\node at (-3.35,0.3) {\text{labeled by $f \in F^1$:}};
\node at (0.5,0.5) {$T_f$};
\node[inner sep=0.8pt] (00) at (0,0) {$.$};
\node[inner sep=0.8pt] (10) at (1,0) {$.$}
	edge[->] node[auto,black] {$q(f)$} (00);
\node[inner sep=0.8pt] (01) at (0,1) {$.$}
	edge[->] node[auto,black,swap] {$r(f)$} (00);
\node[inner sep=0.8pt] (11) at (1,1) {$.$}
	edge[->] node[auto,black,swap] {$p(f)$} (01)
	edge[->] node[auto,black] {$s(f)$} (10);

\end{tikzpicture}
\end{array}
\end{equation}

\noindent
Let $W_T = \{ T_f : f \in F^1 \}$ denote the collection of squares associated to $T$.

\begin{remark} \label{rmk:square-convention}
Here our conventions for labeling the squares associated to a textile system are different from those in \cite{nasu} and \cite{johnson-madden}. 
This is the first visible sign of our categorical preferences mentioned in the introduction.
To be precise, we use the convention that $r(f)$ lies on the left of the square denoting the edge $f$ and $s(f)$ on the right so that (with concatenation of paths as composition of morphisms) $F^*$ is a category. To maintain consistency with Nasu's definition of LR textile systems (in particular, so that we measure the path lifting properties of $p$ with respect to $r$ and not $s$ in Definitions~\ref{dfn:resolving}) this choice forces us to place $p(f)$ on the top of the square $T_f$ and $q(f)$ on the bottom.
\end{remark}

\begin{definitions} \label{dfn:resolving}
A graph homomorphism $p: F \to E$ is said to have {\em (unique) $r$-path lifting} if, for all $v \in F^0,$ whenever $p(v) = r(e) \in E^0$ {for some $e\in E^1$}, there exists (a unique) $f \in F^1$ with $p(f) = e$ and $r(f) = v.$  Having unique $r$-path lifting is sometimes referred to being {\em left resolving} in the dynamics literature.
Similarly, $p$ has {\em (unique) $s$-path lifting} if, for all $v\in F^0$, whenever $p(v) = s(e) \in E^0$ for some $e\in E^1$, there exists (a unique) $f \in F^1$ with $p(f) = e$ and $s(f) = v$. Having unique $s$-path lifting is sometimes referred to as being {\em right resolving} in the dynamics literature

A textile system $T= (p, q: F\to E)$ is called {\em LR} if $p$ has unique $r$-path lifting and $q$ has unique $s$-path lifting. 
\end{definitions}

\begin{example}\label{ex:tex}
Consider the directed graphs shown below:
\[
\begin{tikzpicture}[yscale=0.75, >=stealth]
\node[inner sep=0.8pt] (v) at (0,0) {$\scriptstyle a_2$};
\node[inner sep=0.8pt] (w) at (2,0) {$\scriptstyle a_1$};

\node[inner sep=2pt, anchor = west] at (-1,0 ){$\scriptstyle l_2$};
\draw[->] (v.north west)
        .. controls (-0.25,0.75) and (-1,0.5) ..
        (-1,0)
        .. controls (-1,-0.5) and (-0.25,-0.75) ..
        (v.south west);
\draw[->] (v.north east)
    parabola[parabola height=0.5cm] (w.north west);

\node[inner sep=2pt, anchor = north] at (1,0.5) {$\scriptstyle c_1$};
\draw[->] (w.south west)
    parabola[parabola height=-0.5cm] (v.south east);

\node[inner sep=2pt, anchor = south] at (1,-0.5) {$\scriptstyle c_2$};
\node at (-2,0) {$F:=$};
\node at (5,0) {$E:=$};

\node[inner sep=0.8pt] (u) at (7,0) {$\scriptstyle v$}
	edge[loop,->,in=45,out=-45,looseness=15] node[auto,black,swap] {$\scriptstyle b_2$} (u)
	edge[loop,->,in=135,out=-135,looseness=15] node[auto,black] {$\scriptstyle b_1$} (u);
\end{tikzpicture}
\]

\noindent Define $p,q : F \to E$ by $p(c_1)= b_1, p(c_2)= p(l_2) = b_2$, and $q(c_1)= q(l_2) = b_1, q(c_2)=b_2$.
Then $T = (p,q:F \to E)$ is a non-LR textile system since $p(l_2)=p(c_2)=b_2$ and $r(l_2)=r(c_2)=a_2$. 
Worse yet, $p$ fails to have $s$-path lifting, since there is no edge in $F^1$ with $s(f)= a_1$ and $p(f)= b_1$, and $q$ fails to have $r$-path lifting since there is no edge $f \in F^1$ with $r(f) = a_1$ and $q(f) = b_2$.
\end{example}

\begin{lemma}\label{lem:upl gives texsys}
Let $E$ and $F$ be directed graphs and $p,q:F\to E$ be graph morphisms. If $p$ has unique $r$- or unique $s$-path lifting, then $(p,q:F \to E)$ is a textile system for any $q$. Similarly, if $q$ has unique $r$- or unique $s$-path lifting, then $(p,q:F \to E)$ is a textile system for any $p$.
\end{lemma}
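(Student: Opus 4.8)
The plan is to unpack the definition of a textile system and reduce the claim to the injectivity of the labeling map $A : F^1 \to F^0 \times E^1 \times F^0 \times E^1$, where $A(f) = (r(f), p(f), s(f), q(f))$. By Definition~\ref{def:textile}, showing that $(p,q:F\to E)$ is a textile system amounts precisely to verifying that $A$ is injective. So the whole task is: assuming $p$ (or $q$) has one of the two unique path-lifting properties, prove that $A(f) = A(g)$ forces $f = g$.

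First I would treat the case where $p$ has unique $r$-path lifting, for an arbitrary $q$. Suppose $f, g \in F^1$ satisfy $A(f) = A(g)$; then in particular $r(f) = r(g) =: v$ and $p(f) = p(g) =: e$. The hypothesis of unique $r$-path lifting says that once we fix the vertex $v \in F^0$ with $p(v) = r(e)$ and the edge $e \in E^1$, there is at most one edge $h \in F^1$ with $r(h) = v$ and $p(h) = e$. Both $f$ and $g$ are such edges, so uniqueness gives $f = g$ directly. Note that the first two coordinates of $A$ already suffice here, and $q$ plays no role, which is exactly why the statement holds for any $q$.

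The remaining three cases are symmetric variants obtained by the same argument with the appropriate pair of coordinates of $A$. If $p$ has unique $s$-path lifting, I would instead read off $s(f) = s(g)$ and $p(f) = p(g)$ from $A(f) = A(g)$ and invoke uniqueness of $s$-path lifting to conclude $f = g$; again $q$ is irrelevant. The two cases in which $q$ (rather than $p$) enjoys a unique path-lifting property are handled identically, now using the coordinates $r(f), q(f)$ (for unique $r$-path lifting of $q$) or $s(f), q(f)$ (for unique $s$-path lifting of $q$), with $p$ now unconstrained. In every case the extra coordinates of $A$ beyond the relevant two are simply not needed.

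I do not anticipate a genuine obstacle: the lemma is essentially a direct translation of the uniqueness clause in Definition~\ref{dfn:resolving} into the injectivity clause of Definition~\ref{def:textile}. The only point requiring minor care is bookkeeping—making sure that in each of the four cases I select the correct two coordinates of $A$ and match them against the correct instance of ``(unique) $r$-path lifting'' or ``(unique) $s$-path lifting,'' and confirming that the hypothesis $p(v) = r(e)$ (or $p(v)=s(e)$, etc.) needed to apply unique path lifting is automatically satisfied because $f$ itself witnesses it. Since all four cases follow the same template, I would prove one in full and indicate that the others are analogous.
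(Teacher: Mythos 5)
Your proposal is correct and follows exactly the paper's own argument: reduce the claim to injectivity of the map $A(f) = (r(f), p(f), s(f), q(f))$, note that $A(f)=A(g)$ gives $r(f)=r(g)$ and $p(f)=p(g)$, and conclude $f=g$ from unique $r$-path lifting, with the other three cases handled \emph{mutatis mutandis}. Nothing to add.
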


\begin{proof}
Suppose that $p$ has unique $r$-path lifting. We check that $f\mapsto ((r(f),p(f),s(f),q(f))$ is injective. Let $f , g \in F^1$ be such that $((r(f),p(f),s(f),q(f)) = ((r(g),p(g),s(g),q(g))$. Then $p(f) = p(g)$ and $r(f)=r(g)$, hence $f=g$. The other cases are proved \emph{mutatis mutandis}.
\end{proof}

\noindent
However, Example~\ref{ex:tex} shows that the converse of Lemma~\ref{lem:upl gives texsys} does not hold.

The following is adapted from \cite[\S 4]{Schmidt}.

\begin{definition}[Textile tiling]\label{dfn:textiletiling}
Let $T = ( p,q : F \to E)$ be a 
textile system, with the associated collection $W_T = \{ T_f : f \in F^1 \}$ of squares.  A (one-sided) \emph{textile 
weaved by $T$} 
is a covering of $\RR_{\ge 0}^2$ by translating copies of elements of $W_T$ with non-overlapping interiors such that the following conditions are satisfied:
\begin{enumerate}[label = (\roman*)]
\item every corner of each square lies in $\NN^2 \subset \RR_{\ge 0}^2 $; 
\item two squares are only allowed to touch along edges 
in the following sense:
\begin{itemize}
\item[(T1)] $s(f) = r(f')$ whenever $T_f, T_{f'}$ are horizontally adjacent squares with $T_f$ to the left of $T_{f'}$, or 
\item[(T2)] $p(f) = q(f')$ if $T_f, T_{f'}'$ are vertically adjacent with $T_{f'}$ above $T_f$.
\end{itemize}
\end{enumerate}
We write $\textsf{X}_T^+ $ for the subshift of $W_T^{\N^2}$ (with the usual horizontal and vertical shift maps) consisting of all textiles weaved by $T$. 
That is, 
\begin{equation}
\label{eq:textiles}
\textsf{X}_T^+ = \{ x \in W_T^{\NN^2} : s(x_{v})= r(x_{v+\varepsilon_1}), p(x_w)=q(x_{w+\varepsilon_2}), \;\; \forall v,w \in \mathbb{N}^2\} .
\end{equation}
\end{definition}

\begin{remark} \label{rmk:textilesysissft}
It is straightforward to show that $\textsf{X}_{T}^+ \subset W_T^{\NN^2}$ is closed and shift invariant. 
When the graph $F$ is finite, $\textsf{X}_T^+$ is a shift of finite type as it is of the form \eqref{eq:sftdef} for $Z=\{0,1\}^2$:
the restrictions of \eqref{eq:textiles} specify which $2\times 2$ blocks of tiles are allowed.

For simplicity we shall identify $W_T$ with $F^1$, that is, the alphabet of the shift space $\textsf{X}_T^+$ is $F^1$.
\end{remark}

We now proceed to give sufficient conditions for $\textsf{X}_T^+$ to be nonempty.

\begin{theorem}\label{lem:basic}
    [cf.~\cite[Proposition 4.6]{Yuxiang-Tang}  ]\label{thm:cantile}
Let $T = (p,q : F \to E)$ be a textile system in which 
$F$ is source-free, $q|_{F^0}$ is onto, 
and $q$ has $r$-path lifting. Then $\textsf X_T^+$ is nonempty. 
\end{theorem}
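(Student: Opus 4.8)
The plan is to build an explicit point $x \in \textsf{X}_T^+$ one horizontal row at a time, starting from the bottom row $(x_{(n_1,0)})_{n_1 \ge 0}$ and proceeding upward. The key observation is that the two gluing conditions defining $\textsf{X}_T^+$ in \eqref{eq:textiles} decouple: the horizontal condition $s(x_v) = r(x_{v+\varepsilon_1})$ says precisely that each row, read left to right, is an infinite path in $F$, while the vertical condition $p(x_w) = q(x_{w+\varepsilon_2})$ relates a row to the one directly above it through the homomorphisms $p$ and $q$.

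First I would construct the bottom row. Since $F$ is source-free, $r \colon F^1 \to F^0$ is onto, so (as $F$ is nonempty) I may start from any edge and repeatedly extend to the right: given an edge with source $v$, source-freeness supplies an edge with range $v$. This produces an infinite path $(x_{(n_1,0)})_{n_1}$ in $F$, i.e.\ a bottom row satisfying the horizontal condition. Then, given a row $(x_n)_n$ already constructed, I would produce the row $(y_n)_n$ directly above it, meaning a path in $F$ with $q(y_n) = p(x_n)$ for every $n$. Here the $r$-path lifting property of $q$ does the work. To begin the new row I use that $q|_{F^0}$ is onto to choose a vertex $v_0 \in F^0$ with $q(v_0) = p(r(x_0)) = r(p(x_0))$; applying $r$-path lifting of $q$ to the edge $p(x_0)$ at the vertex $v_0$ then yields $y_0 \in F^1$ with $q(y_0) = p(x_0)$ and $r(y_0) = v_0$. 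Inductively, having built $y_{n-1}$ with $q(y_{n-1}) = p(x_{n-1})$, I set $v = s(y_{n-1})$ and lift the edge $p(x_n)$, obtaining $y_n$ with $q(y_n) = p(x_n)$ and $r(y_n) = s(y_{n-1})$, so that both gluing conditions hold along the new row.

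The crux---and the step I would verify most carefully---is that the hypothesis of $r$-path lifting can actually be invoked at each inductive step, i.e.\ that the compatibility condition $q(v) = r(p(x_n))$ holds for $v = s(y_{n-1})$. This is exactly where the fact that $p$ and $q$ are \emph{graph homomorphisms} is essential: using $r(x_n) = s(x_{n-1})$ (the horizontal condition for the lower row) together with the inductive hypothesis $q(y_{n-1}) = p(x_{n-1})$, one computes $q(s(y_{n-1})) = s(q(y_{n-1})) = s(p(x_{n-1})) = p(s(x_{n-1})) = p(r(x_n)) = r(p(x_n))$, which is precisely the condition required to lift. Note that only the existence (not uniqueness) of lifts is needed, so the bare $r$-path lifting hypothesis suffices.

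Iterating the upward step over all rows defines $x$ on all of $\NN^2$: the bottom row covers $\{(n_1,0)\}$, and each upward pass extends the definition from row $n_2$ to row $n_2+1$. By construction $x$ satisfies $s(x_v) = r(x_{v+\varepsilon_1})$ and $p(x_w) = q(x_{w+\varepsilon_2})$ for all $v,w$, so $x \in \textsf{X}_T^+$ and the shift space is nonempty. The three hypotheses are used in distinct places---source-freeness of $F$ builds the bottom row, surjectivity of $q|_{F^0}$ starts each higher row, and $r$-path lifting of $q$ propagates each row rightward---so I would expect the main subtlety to be bookkeeping the homomorphism identities in the crux computation rather than any genuine obstruction.
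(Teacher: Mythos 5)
Your proof is correct and takes essentially the same approach as the paper's: both construct a tiling row by row, using source-freeness of $F$ to build the bottom row, surjectivity of $q|_{F^0}$ to start each new row, and $r$-path lifting of $q$ to extend each row to the right. The compatibility computation you single out as the crux, $q(s(y_{n-1})) = s(q(y_{n-1})) = s(p(x_{n-1})) = p(r(x_n)) = r(p(x_n))$, is precisely the paper's verification that $u_1 := q(w_1) = s(p(x_0^0)) = r(p(x_1^0))$ before invoking the lifting hypothesis.
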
 

\begin{minipage}[l]{0.4\textwidth}
\[
\begin{tikzpicture}[scale=1.5]
        
\node[inner sep=1pt, circle] (10) at (1,0) {$.$};
\node[inner sep=1pt, circle] (11) at (1,1) {\tiny $u_0$};

\node[inner sep=1pt, circle] (22) at (2,2) {$.$};
\node[inner sep=1pt, circle] (21) at (2,1) {\tiny $u_1$};
        
\node[inner sep=1pt, circle] (30) at (3,0) {$.$};
\node[inner sep=1pt, circle] (31) at (3,1) {$.$};
    
\node[inner sep=1pt, circle] (40) at (4,0) {$.$};
\node[inner sep=1pt, circle] (41) at (4,1) {$.$};

\node[inner sep=1pt, circle] (12) at (1,2) {$.$};
\node[inner sep=1pt, circle] (20) at (2,0) {$.$};
\node[inner sep=1pt, circle] (32) at (3,2) {$.$};
\node[inner sep=1pt, circle] (42) at (4,2) {$.$};

    
\draw[->] (11)--(10) node[pos=0.5, left] {\tiny\color{black}$ $};
    
    
\draw[->] (20)--(10) node[pos=0.5, below] {\tiny\color{black}$ $};
\draw[->] (21)--(11) node[pos=0.5, below] {\tiny\color{black}$p(x_0^0)$};
\draw[->] (21)--(20) node[pos=0.5, left] {\tiny\color{black}$ $};
    
    
\draw[->] (30)--(20) node[pos=0.5, below] {\tiny\color{black}$ $};
\draw[->] (31)--(21) node[pos=0.5, below]{\tiny\color{black}$p(x_1^0)$};
\draw[->] (31)--(30) node[pos=0.5, left] {\tiny\color{black}$ $};
    
    
\draw[->] (40)--(30) node[pos=0.5, below] {\tiny\color{black}$ $};
\draw[->] (41)--(31) node[pos=0.5, above] {\tiny\color{black}$ $};
\draw[->] (41)--(40) node[pos=0.5, left] {\tiny\color{black}$ $};

\node at (1.5,1.2) {\tiny $q(x_0^1)$};
\node at (1.5, 0.4) {\tiny $x_0^0$};
\node at (2.5, 0.5) {\tiny $x_1^0$};
\node at (3.5, 0.5) {\tiny $x^0_2$};
\node at (4.4,0.5) {$\ldots$};

\node at (2.5,1.2) {\tiny $q(x_1^1)$};
\node at (1.5,1.6) {\tiny $x^1_0$};
\node at (2.5,1.5) {\tiny $x^1_1$};
\node at (3.5,1.5) {\tiny $x^1_2$};
\node at (4.4,1.5) {$\ldots$};

\node at (1.5,2.5) {$\vdots$};
\node at (2.5,2.5) {$\vdots$};
\node at (3.5,2.5) {$\vdots$};

\draw[->] (12)--(11) node[pos=0.5, right] {\tiny\color{black}$ $};
    
    
\draw[->] (22)--(12) node[pos=0.5, below] {\tiny\color{black}$ $};

\draw[->] (22)--(21) node[pos=0.5, left] {\tiny\color{black}$  $};
  
  
\draw[->] (42)--(32) node[pos=0.5, below] {\tiny\color{black}$ $};
\draw[->] (42)--(41) node[pos=0.5, left] {\tiny\color{black}$ $};
  
    
\draw[->] (32)--(22) node[pos=0.5, below] {\tiny\color{black}$ $};
  \draw[->] (32)--(31) node[pos=0.5, left] {\tiny\color{black}$ $};

\end{tikzpicture}
\]
\end{minipage}%
\begin{minipage}[l]{0.6\textwidth}

{\em Proof. }
We first show that for any infinite path $x^0 \in \textsf X^+_F$ in $F$, there is $x^1 \in \textsf X_F^+$ such that $q(x^1) = p(x^0).$

As $F$ is source-free, $\textsf{X}^+_F$ defined in \eqref{eq:inf-path-space-defn} is nonempty so we may choose $x^0 = (x_i^0 )_{i \in \NN}  \in \textsf{X}^+_F$. Consider $x_0^0 \in F^1$ and let $u_0 = r( p(x_0^0))$. Since $q$ has $r$-path lifting and $q|_{F^0}$ is onto,
we may choose $w_0 \in F^0$ with $q (w_0) =u_0$ and $x^1_0 \in F^1$ such that $r ( x^1_0) = w_0$ and $q (x^1_0 ) =p ( x_0^0 )$. Let $w_1 = s(x^1_0) $; as $q$ is a graph homomorphism, 
\[ u_1 := q(w_1)  = s(q(x^1_0)) = s(p(x_0^0)) = r(p(x_1^0)).\]
The fact that $q$ has $r$-path lifting now implies the existence of $x^1_1 \in F^1$ with $q(x^1_1) = p(x^0_1)$ and $r(x^1_1) = w_1$. Let $w_2 = s(x^1_1)$. Continuing inductively, we construct $x^1 = (x_i^1)_{i\in \N} \in X^+_F$ with $q(x^1_i) = p(x_i^0)$ for all $i$.

\end{minipage}
Following the same construction, we can create $(x^n)_{n \in \N} \subseteq \textsf X_F^+$ such that $p(x^n) = q(x^{n+1})$ for all $n\in \N$.  That is, by construction, $x = (x^i_j)_{(j,i)\in \N^2} \in \textsf{X}_T^+. \hfill  \square$

The following definition follows the notation of \cite{johnson-madden}; the textile system $\widehat{T}$ is called the dual textile system in \cite[page 15]{nasu}.

\begin{definition} [Inverted textile system]\label{dfn:inverted-textile}
Let $T=(p,q:F \to E)$ be a textile system. Define directed graphs $\widehat{E} = ( E^0 , F^0 , q , p )$  and $\widehat{F} = ( E^1 , F^1 , q , p )$. Then $\hat{p} = ( s_E , s_F)$, $\hat{q} = ( r_E , r_F )$ are graph morphisms from $\widehat{F}$ to $\widehat{E}$ and $\widehat{T}= ( \hat{p} , \hat{q} : \widehat{F} \to \widehat{E}  )$  is a textile system, called the \emph{inverted textile system}. 
\end{definition}

\noindent 

We record the following Lemma, whose proof is straightforward.

\begin{lemma}
For any textile system $T$, we have $(\widehat{T})\widehat{\vphantom{T}} = T$, and (in the notation of Definition \ref{dfn:inversion}) $\theta({\sf X}_T^+) = {\sf X}_{\widehat{T}}^+$. 
    Furthermore, if $T=(p,q:F \to E)$ is LR, then the inverted system $\widehat{T} = ( \hat{p},\hat{q}: \widehat{F}\to  \widehat{E} )$ is LR.
\end{lemma}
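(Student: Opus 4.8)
The plan is to establish all three assertions by unwinding Definition~\ref{dfn:inverted-textile}, guided by a single geometric observation that I would state at the outset: the square $\widehat{T}_f$ of the inverted system is the reflection of $T_f$ across its main diagonal. Indeed, decoding the labels in \eqref{eq:square-labels} for $\widehat{T}$ shows that $\widehat{T}_f$ carries $s(f)$ on top, $r(f)$ on the bottom, $q(f)$ on the left and $p(f)$ on the right---exactly the transpose of $T_f$. Recognising inversion as this transpose is what makes each of the three claims transparent.

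For the involutivity $(\widehat{T})\widehat{\vphantom{T}} = T$, I would apply Definition~\ref{dfn:inverted-textile} twice, tracking the four pieces of data. The first inversion records $q, p$ (restricted to $F^0$, resp.\ $F^1$) as the range and source maps of $\widehat E$, resp.\ $\widehat F$, and promotes the source and range maps of $E$ and $F$ to the new morphisms $\hat p = (s_E, s_F)$, $\hat q = (r_E, r_F)$. Applying the recipe a second time to $\widehat T = (\hat p, \hat q \colon \widehat F \to \widehat E)$, the range and source of the doubly-inverted graphs are now read off from $\hat q, \hat p$, which return $r_E, s_E, r_F, s_F$; hence $\widehat{\widehat E} = (E^0, E^1, r_E, s_E) = E$ and $\widehat{\widehat F} = (F^0, F^1, r_F, s_F) = F$. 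Meanwhile the doubly-inverted morphisms $(s_{\widehat E}, s_{\widehat F})$ and $(r_{\widehat E}, r_{\widehat F})$ are assembled from the source and range of $\widehat E, \widehat F$, which are exactly $p$ and $q$, so they unwind to $p$ and $q$. This step is pure bookkeeping; the only subtlety is remembering that $p$ and $q$ sit on the \emph{vertex} level of $\widehat E$ and the \emph{edge} level of $\widehat F$ simultaneously.

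For the identity $\theta({\sf X}_T^+) = {\sf X}_{\widehat T}^+$, I would work directly from the defining constraints \eqref{eq:textiles} and the explicit formula $\theta(x)_{(n_1,n_2)} = x_{(n_2,n_1)}$ of Definition~\ref{dfn:inversion}. First I would decode the two adjacency conditions defining ${\sf X}_{\widehat T}^+$: using $\hat p|_{\widehat F^1} = s$, $\hat q|_{\widehat F^1} = r$, $s_{\widehat F} = p$ and $r_{\widehat F} = q$, the horizontal condition reads $p(y_v) = q(y_{v+\varepsilon_1})$ and the vertical condition reads $s(y_w) = r(y_{w+\varepsilon_2})$. Substituting $y = \theta(x)$ and relabelling coordinates then shows that the horizontal constraint on $\theta(x)$ is precisely the vertical constraint $p(x_w) = q(x_{w+\varepsilon_2})$ on $x$, and the vertical constraint on $\theta(x)$ is the horizontal constraint $s(x_v) = r(x_{v+\varepsilon_1})$ on $x$. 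This gives $\theta({\sf X}_T^+) \subseteq {\sf X}_{\widehat T}^+$; the reverse inclusion follows by applying the same computation to $\widehat T$ and using $\theta \circ \theta = \id$ together with the involutivity just proved. Since $\theta$ is already known to be a conjugacy, only this set equality needs checking.

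Finally, for preservation of the LR property, I would translate the path-lifting conditions of Definitions~\ref{dfn:resolving} for $\widehat T$ back into statements about $p, q, r, s$. Decoding unique $r$-path lifting for $\hat p$---with a vertex $V \in \widehat F^0 = E^1$, an edge $\mathcal e \in \widehat E^1 = F^0$, and a lift $\mathcal f \in \widehat F^1 = F^1$---yields precisely the assertion that for every $V \in E^1$ and $\mathcal e \in F^0$ with $s(V) = q(\mathcal e)$ there is a unique $\mathcal f \in F^1$ with $s(\mathcal f) = \mathcal e$ and $q(\mathcal f) = V$, which is verbatim the unique $s$-path lifting of $q$. Symmetrically, unique $s$-path lifting for $\hat q$ unwinds to unique $r$-path lifting of $p$. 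Hence $T$ being LR forces $\widehat T$ to be LR. The main obstacle throughout is not mathematical depth but notational bookkeeping: one must keep straight, at each step, whether $p$ or $q$ acts on $F^0$ or on $F^1$ and whether it plays the role of range or of source, and---most delicately---match the quantifier structure of the path-lifting definition correctly, since a transposed but misread quantifier would silently swap the $r$- and $s$-lifting hypotheses.
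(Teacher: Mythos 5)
Your proof is correct. The paper itself offers no argument for this lemma---it simply records it ``whose proof is straightforward''---and your three-part verification (the bookkeeping for $(\widehat{T})\widehat{\vphantom{T}}=T$, the coordinate-swap computation showing the horizontal/vertical constraints of ${\sf X}_{\widehat{T}}^+$ pull back under $\theta$ to the vertical/horizontal constraints of ${\sf X}_T^+$, and the translation of unique $r$-path lifting for $\hat{p}$ into unique $s$-path lifting for $q$ and vice versa) is exactly the straightforward definitional check the authors had in mind, with all identifications ($\widehat{F}^0=E^1$, $\widehat{E}^1=F^0$, $\hat{p}=(s_E,s_F)$, $\hat{q}=(r_E,r_F)$) decoded correctly.
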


\section{2-graphs and their dynamical systems} \label{sec:catmodels}

Theorem~\ref{thm:PST} below shows that all LR textile systems can be described as higher-rank graphs of rank two, or $2$-graphs.  
As we mentioned in the introduction, graphical moves such as insplitting have been developed in \cite{EFGGGP} for 2-graphs (and indeed for higher-rank graphs in general), and a main goal of this paper is to clarify the relationship between 2-graph insplitting and textile system insplitting.  Thus, this section defines 2-graphs and establishes the equivalence between $2$-graphs and LR textile systems.

\subsection{Rank two graphs and two-colored graphs}\label{sec:2coloredpresentation}

In what follows, we view $\N^2$ as a category with one object, namely 0; composition of morphisms is given by coordinate-wise addition.  Thus, the standard notation ``$n \in \N^2$'' indicates that $n$ is a morphism in $\N^2$.  For consistency with this perspective, for a general category $\Lambda$, we will write ``$\lambda \in \Lambda$'' to indicate that $\lambda$ is a morphism in $\Lambda$. We will identify the objects in $\Lambda$ with the identity morphisms.

\begin{definition}\cite[Definitions 1.1]{kp}\label{def:2graph}
A \emph{rank two graph} (or a \emph{$2$-graph}) is a countable category $\Lambda$ with a \emph{degree} functor $d:\Lambda \to \NN^2$ satisfying the \emph{factorization property}: if $\lambda\in\Lambda$ and $m,n\in\NN^2$ are such that $d(\lambda)=m+n$, then there are unique $\mu,\nu \in \Lambda$ with $d(\mu)=m$, $d(\nu)=n$ and $\lambda=\mu\nu$. 
\end{definition}

Given $m \in \NN^2$, we define $\Lambda^m := d^{-1}(m)$. 

The factorization property then allows us to identify $\Lambda^0$ (the morphisms of degree 0) with the identity morphisms (equivalently, the objects  $\operatorname{Obj}(\Lambda)$) of $\Lambda$; given any $\lambda \in \Lambda$ there are unique $v, w \in \Lambda^0$ with $\lambda = v \lambda w$.  We write $v =: r(\lambda)$ and $w = :s(\lambda)$, and we call $\Lambda^0$ the \emph{vertices} of $\Lambda$. A vertex $v \in \Lambda^0$ is a \emph{sink} if $s^{-1} (v) = \{ v\} $, and $w \in \Lambda^0$ is a \emph{source} if $r^{-1} (w) = \{w\}$. 

For $v,w \in \Lambda^0$ and $F \subseteq \Lambda$, define $vF := r^{-1}(v) \cap F$, $Fw := s^{-1}(w) \cap F$, and $vFw := vF \cap Fw$. If $v \Lambda^m$ is finite for all $m \in \mathbb{N}^2$ and $v \in \Lambda^0$ then $\Lambda$ is called \textit{row-finite.} If $v \Lambda^m$ and $\Lambda^m v$ are nonempty for all $m\in \mathbb{N}^2$ and $v \in \Lambda^0$ then $\Lambda$ is called \textit{essential.}

By the factorization property, for each $\lambda\in\Lambda$ and $m \leq n \leq d(\lambda)$, we may write $\lambda=\lambda'\lambda''\lambda'''$, where $d(\lambda')=m, d(\lambda'') = n-m$ and $d(\lambda''')=d(\lambda)-n$; then $\lambda (m,n) :=\lambda''$.
We write 
\[ \ev_{m, n}(\lambda) := \lambda(m,n).\]

For more information about $k$-graphs, see \cite{kp,RSY1} for example.

\begin{example}
Following \cite{kp} let $\Omega_2 $ be the category with 
$\operatorname{Mor}(\Omega_2)= \{ (m,n) \in \mathbb{N}^2  \times \NN^2: 0 \leq m \le n \}$; 
composition of morphisms is given by $ ( \ell,m) (q,n)= \delta_{m,q}(\ell, n).$ 
Set $d ( m , n ) = n - m$; then $d: \Omega_2 \to \NN^2$ is
a functor and $( \Omega_2 , d )$ is a row-finite $2$-graph. The vertices $\Omega_2^0 = \{ (m,m) : m \in \mathbb{N}^2 \}$ may be identified with $\mathbb{N}^2 = \operatorname{Obj}(\Omega_2)$. 
\end{example}

One can also profitably think of a 2-graph as a quotient of the path category of a directed graph with 2 colors of edges.  To be precise, for $i = 1,2$ we call $d^{-1}(\varepsilon_i) =\Lambda^{\varepsilon_i} $ the \textit{edges of color $i$} in $\Lambda$, and call $\Lambda^1 := \Lambda^{\varepsilon_1} \sqcup \Lambda^{\varepsilon_2}$  the \textit{edges} in $\Lambda$. Then every element of $\Lambda$ can be written as a composition of colored edges: given $\lambda \in \Lambda$ with $d(\lambda) = n = (n_1, n_2)$, writing 
\[ n = \overbrace{\varepsilon_1 + \cdots + \varepsilon_1}^{n_1} + \overbrace{\varepsilon_2 + \cdots + \varepsilon_2}^{n_2} \]

and the factorization property tells us that $\lambda = e_1 \cdots e_{n_1} f_1 \cdots f_{n_2}$ for a unique collection of edges $e_i \in \Lambda^{\varepsilon_1}$ of color 1 and $ f_i \in \Lambda^{\varepsilon_2}$  of color 2.

To see why $\Lambda$ is a nontrivial quotient of the path category of the edge-colored directed graph $(\Lambda^0, \Lambda^{\varepsilon_1} \sqcup \Lambda^{\varepsilon_2}, r, s)$, observe that the representation $\lambda = e_1 \cdots e_{n_1} f_1 \cdots f_{n_2}$ depends on the order in which we write $n = (n_1, n_2)$ as a sum of generators of $\NN^2$.  For example, 
since $
 \varepsilon_1 + \varepsilon_2 = \varepsilon_2 + \varepsilon_1$, the factorization property implies that for any $\lambda \in \Lambda^{\varepsilon_1 + \varepsilon_2}$, there are unique $e, e' \in \Lambda^{\varepsilon_1}$ and $f, f' \in \Lambda^{\varepsilon_2}$ satisfying 
\[ \lambda = e f = f' e'.\]
Thus, to obtain $\Lambda$ from the path category of $(\Lambda^0, \Lambda^{\varepsilon_1} \sqcup \Lambda^{\varepsilon_2}, r, s)$, we need to take the quotient with respect to the equivalence relation $\sim$ generated by 
\[ ef \sim f'e' \, \text{ if } \, ef, f'e' \text{ represent the same element of } \Lambda.\]
The following makes this idea precise. 

\begin{definition}
\label{def:2-colored-graph}
A \emph{$2$-colored graph} $(G, d)$ is a directed graph $G=(G^0,G^1,r,s)$  along with a \emph{degree map} $d : G^1 \to \{\varepsilon_1,\varepsilon_2\}$.  We think of $\varepsilon_1, \varepsilon_2$ as the {\em colors} of the edges in $G$.
   
We extend the degree map to a map $d: G^* \to \NN^2$ on the path category of $G$ by 
\[ 
d(\lambda) = (m, n) \text{ if } \lambda \text{ has $m$ edges of degree $\varepsilon_1$ and $n$ edges of degree $\varepsilon_2$.}
\]
Let $\sim$ be an equivalence relation on $G^*$ that preserves the degree, range, and source of paths. 
Then by \cite[Theorems 4.4 and  4.5]{hazle-raeburn-sims-webster} the quotient $\Lambda= G^*/\sim$ of the path category of $G$ by an equivalence relation $\sim$ is a $2$-graph if and only if $\sim$ satisfies the following conditions:
\begin{enumerate}
\item[(KG0)] \textbf{Unique factorization:} If $\lambda \in G^*$ is a path such that $\lambda = \lambda_2 \lambda_1$, then the equivalence class $[\lambda] = [p_2] [p_1]$ whenever $p_i\in [\lambda_i]$ for $i=1,2$.
\item[(KG1)] \textbf{Non-degeneracy:} If $f, g\in G^1$ are edges, then $f \sim g \iff f = g.$
\item[(KG2)]\begin{minipage}{0.75\textwidth}
\textbf{Completeness:} Whenever $fg \in G^2$ 
is a path consisting of one edge of each color, there is a unique path $g' f' \sim fg$ with $d(g') = d(g)$ and $d(f') = d(f)$.
\end{minipage}
\begin{minipage}{0.25\textwidth}
\[ 
\begin{tikzpicture}[>=stealth,yscale=1.5,xscale=2]
\node at (0.53,0.5) {\tiny $fg \sim g'f' $
};
\node (00) at (0,0) {
};
\node (10) at (1,0) {$ $}
	edge[->,blue,thick] node[auto,black] {\tiny $\exists ! g'$} (00);
\node (01) at (0,1) {$ $}
	edge[->,red,dashed,thick] node[auto,black,swap] {$f$} (00);
\node (11) at (1,1) {$ $}
	edge[->,blue,thick] node[auto,black,swap] { $g$} (01)
	edge[->,red,dashed,thick] node[auto,black] {\tiny $\exists ! f'$} (10);
\end{tikzpicture}
\]
\end{minipage}
\end{enumerate}
    \end{definition}

\begin{remark}
\label{rmk:2-skel-2colograph}
If $\Lambda = G^*/\sim$ is a 2-graph, we call $(G,d)$ the {\em 1-skeleton} of $\Lambda$. 
As $\sim$ respects the degree, range, 
and source maps from $G$, these descend to maps on $\Lambda$. The fundamental building blocks of $\Lambda$ are the {\em  commuting squares} of $\Lambda = G^*/\sim$:
\[
\Cc_\Lambda = \{ (fg ,g'f') : fg \sim g'f',  \ d(f)=d(f') = \varepsilon_2, d(g) =d(g') = \varepsilon_1\} .
\]
\end{remark}

Indeed (cf.~\cite[Proposition 4.3]{pask-quigg-raeburn}), fixing a set $\mathcal C_\Lambda$  of commuting squares satisfying (KG2) leads to a uniquely-defined  equivalence relation $\sim$ on $G^*$ which respects degree, source, and range and also satisfies (KG0) and (KG1) (that is, an equivalence relation making $G^*/\sim$ into a 2-graph).  To be precise,  we guarantee (KG1) by defining 
$[e_1] = \{ e_1\}$ if $e_1 \in G^0 \cup G^1,$ and we ensure (KG0) by extending $\sim$ to be a symmetric and transitive relation on $G^*$.

\subsection{2-graphs and textile systems}
The perspective of 2-colored graphs enables us to articulate the link between 2-graphs and (LR) textile systems. 
 The following definition was first given in \cite{Yuxiang-Tang}.

 \begin{definition}[$2$-colored graph associated to a textile system]\label{dfn:2colgraph from T}
 Let $T = (p,q:F \to E)$ be a textile system. Define the 2-colored graph $(G_T,d)$ as follows: Let $(G_T)^0 = E^0$,  $(G_T)^1 = E^1 \sqcup F^0$. For $e \in E^1$ define
 $s  ( e ) = s_E (e)$, $r (e) = r_E (e)$ and $d (e) = \varepsilon_1$, and for $w \in F^0$ define
 $s (w) = p(w)$, $r(w) = q(w)$ and $d (w) = \varepsilon_2$. 
 
 We define an equivalence relation $\sim$ on $G_T^*$ by setting $[\alpha] = \{ \alpha\}$ for any $\alpha \in G_T^0 \cup G_T^1$; defining $\sim$ on 2-colored paths of length 2 by 
 \begin{equation} v e \sim e' w \text{ if and only if there exists } \ f \in F^1 \text{ with } r(f) = v, s(f) = w, p(f) = e, q(f) = e';
 \label{eq:squares}
 \end{equation}
 and extending $\sim$ to  paths of other lengths inductively by defining 
 $[\lambda \mu] = [\lambda] [\mu]$. 
 \end{definition}
That is, the commuting squares in $G_T$ correspond to the edges in $F,$ as follows:
\[ 
\begin{tikzpicture}[>=stealth, scale=1.5]
\node (00) at (0,0) 
{};
\node (10) at (1,0) {$ $}
	edge[->,blue,thick] node[auto,black] {\tiny $q ( f)$} (00);
\node (01) at (0,1) {$ $}
	edge[->,red,dashed,thick] node[auto,black,swap] {\tiny $r ( f )$} (00);
\node (11) at (1,1) {$ $}
	edge[->,blue,thick] node[auto,black,swap] {\tiny $p(f)$} (01)
	edge[->,red,dashed,thick] node[auto,black] {\tiny $s(f)$}  (10);
 \node at (0.5, 0.5) {$f$};
\end{tikzpicture}
\]

\noindent
The next result, which is a major foundation for the research in this paper, is a generalization of \cite[Theorem 3.8]{Yuxiang-Tang}.

\begin{theorem} \label{thm:PST}
Let $T=(p,q:F \to E)$ be a textile system with $p,q$  surjective. Let $(G_T,d)$  
be the associated $2$-colored graph  and 
 $\sim$ be the equivalence relation on $G_T^*$ from Definition \ref{dfn:2colgraph from T}. 
Then 
$ \Lambda_T:= G_T^*/\sim$ is a 2-graph
if and only if $T$ is LR (that is, $p$ has unique $r$-path lifting and $q$ has unique $s$-path lifting).
\end{theorem}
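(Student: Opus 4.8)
The plan is to read the statement through the criterion recalled in Definition~\ref{def:2-colored-graph}: the quotient $\Lambda_T = G_T^*/\sim$ is a $2$-graph if and only if $\sim$ preserves degree, range and source and satisfies (KG0), (KG1) and (KG2). By the construction of $\sim$ in Definition~\ref{dfn:2colgraph from T}, conditions (KG0) and (KG1) are automatic: we declared $[\alpha] = \{\alpha\}$ for every $\alpha \in G_T^0 \cup G_T^1$ (which is (KG1)), and we extended $\sim$ to longer paths precisely by $[\lambda\mu] = [\lambda][\mu]$ (which is (KG0)). I would first dispatch the routine observation that the generating relation $ve \sim e'w$ preserves degree ($\varepsilon_1 + \varepsilon_2$ on each side), range and source: if $ve \sim e'w$ is witnessed by $f \in F^1$, then because $p,q$ are graph homomorphisms one has $r_{G_T}(ve) = q(r(f)) = r_E(q(f)) = r_{G_T}(e'w)$ and $s_{G_T}(ve) = s_E(p(f)) = p(s(f)) = s_{G_T}(e'w)$. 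Thus the whole theorem reduces to the single claim that (KG2) holds if and only if $T$ is LR.

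Next I would enumerate the composable bicolored paths of length two in $G_T$. Since the color-$\varepsilon_1$ edges are $E^1$ and the color-$\varepsilon_2$ edges are $F^0$, such a path is either (a) a path $ve$ with $v \in F^0$, $e \in E^1$ and $s_{G_T}(v) = r_{G_T}(e)$, i.e.\ $p(v) = r_E(e)$; or (b) a path $ew$ with $e \in E^1$, $w \in F^0$ and $s_{G_T}(e) = r_{G_T}(w)$, i.e.\ $s_E(e) = q(w)$. The relation \eqref{eq:squares} pairs a path of type (a) with one of type (b): the square indexed by $f \in F^1$ records $ve \sim e'w$ with $(v,e,w,e') = (r(f),p(f),s(f),q(f)) = A(f)$, and injectivity of $A$ (Definition~\ref{def:textile}) ensures distinct edges index distinct squares. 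Condition (KG2) demands that this pairing be a bijection: every type-(a) path has a unique type-(b) partner and, symmetrically, every type-(b) path has a unique type-(a) partner.

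Finally I would match the two halves of this bijection to the two resolving hypotheses. For type-(a) paths, completeness says that for every $v \in F^0$ and $e \in E^1$ with $p(v) = r_E(e)$ there is a unique $f \in F^1$ with $r(f) = v$ and $p(f) = e$ (whereupon $w = s(f)$ and $e' = q(f)$ are forced) --- which is verbatim the statement that $p$ has unique $r$-path lifting (Definition~\ref{dfn:resolving}). Dually, completeness for type-(b) paths says that for every $w \in F^0$ and $e \in E^1$ with $q(w) = s_E(e)$ there is a unique $f \in F^1$ with $s(f) = w$ and $q(f) = e$, i.e.\ that $q$ has unique $s$-path lifting. Since $T$ is LR exactly when both of these hold, combining the two cases yields (KG2) $\iff$ $T$ is LR, as desired. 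I expect the only genuine subtlety to be bookkeeping: one must respect the right-to-left composition convention and the color assignments so that type-(a) paths pair with the $r$-path lifting of $p$ (rather than with $q$ and $s$), and one must invoke both the existence and the uniqueness clauses of (KG2) to recover the word \emph{unique} in each path-lifting property (and conversely). I would also track where surjectivity of $p,q$ is used; it appears inessential to the correspondence above, but it secures the compatibility of the skeleton $G_T$ with the conventions of Definition~\ref{dfn:2colgraph from T} and the essentiality of $\Lambda_T$ relied upon elsewhere.
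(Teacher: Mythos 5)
Your proposal is correct and follows essentially the same route as the paper: reduce everything to condition (KG2) and match the two kinds of bicolored length-two paths in $G_T$ with unique $r$-path lifting of $p$ and unique $s$-path lifting of $q$, respectively; the paper merely organizes this as two separate implications (LR $\Rightarrow$ (KG2) and (KG2) $\Rightarrow$ LR) rather than your single case-by-case equivalence. If anything, your explicit appeal to the injectivity of $A$ to pass from uniqueness of the equivalent path back to uniqueness of the witnessing edge $f$ is a point the paper's proof leaves implicit.
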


\begin{proof}
First suppose that $p$ has unique $r$-path lifting and $q$ has unique $s$-path lifting. Let $\alpha\beta$ be a $2$-colored path in $G_T^2$. Suppose, without loss of generality, that 
$d(\alpha) = \varepsilon_1$ 
and $d(\beta) = \varepsilon_2$. 
By Definition~\ref{dfn:2colgraph from T} this implies $\alpha \in E^1$ and $\beta \in F^0$, and the fact that $\alpha \beta$ is a path in $G_T$ implies that $s_E(\alpha) = s_{G_T}(\alpha) =  r_{E_T}(\beta) = q(\beta)$. 

Since $q$ has unique $s$-path lifting, there is a unique $f\in F^1\beta$ with $q(f) = \alpha$. 
By definition of $\sim$, we have $r(f) p(f) \sim \alpha \beta$; the uniqueness of $f$ guarantees that $\sim$ satisfies (KG2).
If 
$\alpha$ has color 2 and $\beta$ has color 1 the same argument applies, using unique $r$-path lifting of $p$ in place of unique $s$-path lifting of $q$. 
Thus $\Lambda_T = G_T^*/\sim$ is a 2-graph.

Now suppose that 
$G_T^*/\sim$ is a 2-graph, that is, $\sim$ satisfies (KG2).
We will show that $p$ has unique $r$-path lifting. The argument for $q$ having unique $s$-path lifting is similar. Fix $e \in E^1$, $w \in F^0$  with $r_E(e) = p(w)$, that is, with $r_{G_T}(e) = s_{G_T}(w)$. Then 
$we$ is a 2-colored composable path in $G_T$, 
so by (KG2), there is a unique 2-colored path $e' v$ such that $d(e') = d(e)$, $d(v) = d(w)$,
and $we \sim e'v$.  That is, 
there exists a unique $f \in F^1$ with $r_F(f) = w$, and $p(f) =e$. So $p$ has unique $r$-path lifting. 
\end{proof}

In other words, an LR textile system $T = (p, q: F\to E)$ yields a 2-graph $\Lambda_T$ by identifying $F^1$ with $\Lambda_T^{\varepsilon_1 + \varepsilon_2}.$  Conversely, given a 2-graph $\Lambda$, we can define a textile system $T_\Lambda = (p, q: F_\Lambda \to E_\Lambda)$ by 
\begin{equation} 
\label{eq:textile-from-2graph}
E_\Lambda = (\Lambda^0, \Lambda^{\varepsilon_1}, r, s), \quad F_\Lambda = (\Lambda^{\varepsilon_2}, \Lambda^{\varepsilon_1 + \varepsilon_2}, \ev_{0, \varepsilon_2}, \ev_{\varepsilon_1, \varepsilon_1 + \varepsilon_2}), \quad p = \ev_{\varepsilon_2, \varepsilon_1 + \varepsilon_2}, \quad q = \ev_{0, \varepsilon_1}. \end{equation}
The uniqueness guaranteed by the factorization property implies that $T_\Lambda$ is an LR textile system, and we have $\Lambda_{T_\Lambda} = \Lambda$ and $T_{\Lambda_T} = T$.

 \begin{remark}
     \label{rmk:Lambda-T-range-source}
     By Definition \ref{dfn:2colgraph from T}, if $\Lambda = G^*/\sim$ is a 2-graph with associated textile system $T = (p,q: F \to E)$, then $\Lambda^0 = E^0$ and $\Lambda^{ 1} = E^1 \sqcup F^0.$ For any $v \in F^0$,
     \[ s_\Lambda(v) = p(v), \quad r_\Lambda(v) = q(v), \]
     whereas $s_\Lambda(e) = s(e), r_\Lambda(e) = r(e)$ for $e \in E^1.$
 \end{remark}
 
Next, we show that the notion of \textit{essentiality} translates well between LR textile systems and rank-2 graphs. 
Recall (Definition \ref{def:2graph}) that a 2-graph $\Lambda$ is called \textit{essential} if $v \Lambda^m$ and $\Lambda^m v$ are nonempty for all $m \in \mathbb{N}^2$ and $ v \in \Lambda^0$. We now introduce an analogous notion of \textit{essential} textile systems as follows:

\begin{definition}[Essential textile system] \label{def: essential textile system}
A textile system $T= (p,q: F \to E)$ is said to be \textit{essential} if $F$ is essential (as in Definitions \ref{defs:directed}) and $p$ and $q$ are surjective maps.

\end{definition}
Note that if $T$ is an essential textile system, it follows immediately that $E$ is also essential. We extensively use this fact in the following proposition to prove that essential  LR textile systems are equivalent to essential rank-2 graphs. 

\begin{proposition}\label{essential equivalence}
An LR textile system $T= (p,q: F \to E)$ is essential if and only if its corresponding 2-graph $\Lambda = \Lambda_T$ is essential.
\end{proposition}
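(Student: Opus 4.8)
The plan is to prove both implications by unwinding the dictionary between the textile system $T = (p,q:F \to E)$ and its associated $2$-graph $\Lambda = \Lambda_T$ recorded in Remark \ref{rmk:Lambda-T-range-source}. Since essentiality of $\Lambda$ asks that $v\Lambda^m$ and $\Lambda^m v$ be nonempty for all $m \in \NN^2$ and $v \in \Lambda^0$, while essentiality of $T$ packages together essentiality of $F$ together with surjectivity of $p$ and $q$, the strategy is to reduce the $2$-graph condition for general $m$ down to the generating degrees $\varepsilon_1$ and $\varepsilon_2$, and then match each of those four primitive nonemptiness statements against a corresponding statement about $F$, $E$, $p$, and $q$.

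First I would record the translation at the level of generators. By Remark \ref{rmk:Lambda-T-range-source} we have $\Lambda^0 = E^0$, $\Lambda^{\varepsilon_1} = E^1$ (with the $E$-range and source), and $\Lambda^{\varepsilon_2} = F^0$ (with $r_\Lambda(w) = q(w)$ and $s_\Lambda(w) = p(w)$). Thus, for $v \in \Lambda^0 = E^0$:
\begin{itemize}
\item $v\Lambda^{\varepsilon_1} \ne \emptyset$ and $\Lambda^{\varepsilon_1}v \ne \emptyset$ for all $v$ mean exactly that $r_E$ and $s_E$ are onto, i.e. $E$ is essential;
\item $v\Lambda^{\varepsilon_2} \ne \emptyset$ for all $v \in E^0$ means that every $v$ equals $q(w)$ for some $w \in F^0$, i.e. $q|_{F^0}$ is onto; dually, $\Lambda^{\varepsilon_2}v \ne \emptyset$ for all $v$ says $p|_{F^0}$ is onto.
\end{itemize}
This already handles the degree-$\varepsilon_i$ cases. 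The main content, then, is to connect these generator-level conditions to the \emph{global} essentiality data of $T$ (essentiality of $F$ and surjectivity of the edge maps $p,q:F^1 \to E^1$), and to promote the generator-level $\Lambda$-conditions up to arbitrary $m$.

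For the promotion to arbitrary $m$, I would argue inductively: to produce $\lambda \in v\Lambda^m$ one factors $m = \varepsilon_i + m'$, first finds an edge $\mu \in v\Lambda^{\varepsilon_i}$, then applies the inductive hypothesis at the vertex $s(\mu)$ to find $\nu \in s(\mu)\Lambda^{m'}$ and sets $\lambda = \mu\nu$; the symmetric argument handles $\Lambda^m v$. The key subtlety — and the step I expect to be the main obstacle — is that essentiality of $F$ is genuinely needed (beyond mere surjectivity of $p,q$ and essentiality of $E$) to guarantee that this inductive construction does not stall, and conversely that essentiality of $\Lambda$ forces $F$ to be essential. Concretely, for the forward direction I would translate essentiality of $F$ (both $r_F, s_F : F^1 \to F^0$ onto) together with surjectivity of $p,q$ into the statement that every vertex of $\Lambda$ both receives and emits paths of each degree, using Theorem \ref{thm:cantile} or its one-step analogue to lift adjacencies; for the converse I would take the nonemptiness of $v\Lambda^{\varepsilon_1+\varepsilon_2}$ and $\Lambda^{\varepsilon_1+\varepsilon_2}v$ — which by the identification $\Lambda^{\varepsilon_1+\varepsilon_2} \cong F^1$ and the LR factorization property encode precisely that $r_F$ and $s_F$ hit every vertex of $F$ — to recover essentiality of $F$, and read off surjectivity of $p,q$ on edges from the commuting-square correspondence. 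Assembling these equivalences in both directions completes the proof.
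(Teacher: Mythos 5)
Your generator-level dictionary and the induction promoting the degree-$\varepsilon_1$, $\varepsilon_2$ conditions to all $m\in\NN^2$ are fine, and they match the (implicit) first half of the paper's proof. But your converse direction has a genuine gap, located exactly where the paper's proof does its real work. You claim that the nonemptiness of $v\Lambda^{\varepsilon_1+\varepsilon_2}$ and $\Lambda^{\varepsilon_1+\varepsilon_2}v$, via the identification $\Lambda^{\varepsilon_1+\varepsilon_2}\cong F^1$, ``encodes precisely that $r_F$ and $s_F$ hit every vertex of $F$.'' This is a type error: those conditions are indexed by vertices $v\in\Lambda^0=E^0$, not by vertices of $F$ (which sit inside $\Lambda$ as \emph{edges} of color $2$). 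Under the identification, a square $f\in F^1$ has $r_\Lambda(f)=q(r_F(f))$ and $s_\Lambda(f)=p(s_F(f))$, so nonemptiness of $v\Lambda^{\varepsilon_1+\varepsilon_2}$ and $\Lambda^{\varepsilon_1+\varepsilon_2}v$ for all $v$ says only that $q\circ r_F$ and $p\circ s_F$ map onto $E^0$ --- every vertex of $E$ is the top-left (resp.\ bottom-right) corner of some square. That is strictly weaker than surjectivity of $r_F,s_F$ onto $F^0$: a priori a vertex $w\in F^0$ could receive no edge of $F$ while some other $w'$ with $q(w')=q(w)$ does. Likewise, edge-surjectivity of $p$ and $q$ does not simply ``read off from the commuting-square correspondence''; the correspondence only tells you which $E$-edges occur as tops/bottoms of squares, and you must still produce a square with a prescribed top (or bottom) edge.

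What closes both gaps --- and is the substance of the paper's proof --- is using the LR property as a lifting tool against the \emph{generator-level} conditions you already extracted. For essentiality of $F$: given $w\in F^0$, essentiality of $E$ (the degree-$\varepsilon_1$ conditions) provides $e\in E^1$ with $r_E(e)=p(w)$, and unique $r$-path lifting of $p$ yields $f\in F^1$ with $p(f)=e$ and $r_F(f)=w$; dually, $s$-path lifting of $q$ applied to an edge into $s_E^{-1}(q(w))$ shows $s_F$ is onto. For edge-surjectivity: given $e\in E^1$, the degree-$\varepsilon_2$ condition $s_E(e)\Lambda^{\varepsilon_2}\neq\emptyset$ gives $w\in F^0$ with $q(w)=s_E(e)$, and $s$-path lifting of $q$ produces $f$ with $q(f)=e$; symmetrically for $p$ via $r$-path lifting. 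None of this passes through the degree-$(\varepsilon_1+\varepsilon_2)$ sets at all. A secondary remark on your forward direction: essentiality of $F$ is not needed to keep the induction from ``stalling'' (once the generator conditions hold at every vertex the induction runs unconditionally); it is needed earlier, to prove that $E$ is essential at all, since surjectivity of $p,q$ alone does not give $r_E,s_E$ onto. Your appeal to Theorem \ref{thm:cantile} there is also unnecessary --- the verification is a one-line direct computation, as in the paper.
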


\begin{proof}
First assume that the textile system $T= (p,q: F \to E)$ is essential. Since $p,q,r_E,s_E, r_F, s_F$ are all surjective, we have that for each $z \in \Lambda^0 = E^0$,
\begin{equation*}
z \Lambda^{(1,0)} = r_E^{-1}(z) \ne \emptyset, \;\;\;\;
 \Lambda^{(1,0)} z = s_E^{-1}(z) \ne \emptyset,\;\;\;\;
\end{equation*}
\begin{equation*} 
z \Lambda^{(0,1)}  = q^{-1}(z) \ne \emptyset,\;\;
 \Lambda^{(0,1)} z = p^{-1}(z) \ne \emptyset. 
 \end{equation*}
It follows that $z \Lambda^m$ and $\Lambda^m z$ are nonempty for all $m \in \mathbb{N}^2$. Thus, $\Lambda$ is essential when $T$ is essential.
 
Conversely, assume that $\Lambda$ is essential. 
Then, for each $z \in E^0$, we have 
\[ p^{-1}(z) = \Lambda^{(0,1)} z \ne \emptyset, \;\;\;\; q^{-1}(z) = z \Lambda^{(0,1)}  \ne \emptyset.\]
\[ r_E^{-1}(z) = z \Lambda^{(1,0)} \ne \emptyset, \;\;\;\; s_E^{-1}(z) = \Lambda^{(1,0)} z  \ne \emptyset.\]
Similarly, if $e \in E^1$ with $s_E(e) =z$, the fact that $z\Lambda^{(0,1)}\not= \emptyset$ implies that there exists $w \in F^0$ with $q(w) = z = s_E(e). $ As $T$ is LR, there exists (a unique) $f \in F^1$ with $q(f) = e$; that is, $q$ is surjective.   An analogous argument, using $r$-path lifting of $p$, will show that $p$ is surjective. 

 We have shown that $p$ and $q$ are surjective and $E$ is essential. Now, suppose $v \in F^0$ and $z = p(v)$. 
Since $E$ is essential, there exists $e \in E^1$ such that $r_E(e) = z = p(v)$.
By $r$-path lifting  of $p$, there exists $f \in F^1$ such that $p(f) = e$ and $r_F(f) = v$.
So, $r_F^{-1}(v) \ne \emptyset$ for any $v \in F^0$ and hence $r_F$ is surjective.
Similarly, suppose $v \in F^0$ and $z = q(v)$. 
Since $E$ is essential, there exists $e \in E^1$ such that $s_E(e) = z = q(v)$.
By $s$-path lifting  of $q$, there exists $f \in F^1$ such that $q(f) = e$ and $s_F(f) = v$.
So, $s_F^{-1}(v) \ne \emptyset$ for any $v \in F^0$ and hence $s_F$ is surjective.
Thus, $F$ is also essential.
\end{proof}

\subsection{The shift space associated to a rank two graph}

\begin{definitions}
Let $\Lambda$ be a $2$-graph. The \emph{one-sided infinite path space of $\Lambda$} is 
\[
\Lambda^\infty ~=~ \{ x : \Omega_2  \rightarrow \Lambda  : x \;
\mbox{is a degree preserving functor} \} .
\]

\noindent
For each $p \in \NN^2$, define $\sigma^p : \Lambda^\infty
\rightarrow \Lambda^\infty$ by $\sigma^p (x) (m,n) = x(m+p,n+p)$
for $x \in \Lambda^\infty$ and $(m,n) \in \Omega$. Note that
$\sigma^{p+q} = \sigma^p \circ \sigma^q$.

For $\lambda \in \Lambda$ define
\[
Z ( \lambda ) =  
\{ y  \in \Lambda^\infty: y ( 0 , d ( \lambda ) ) = \lambda \} .
\]
We endow $\Lambda^\infty$ with the topology generated by the basic open sets $\{ Z(\lambda): \lambda \in \Lambda\}$.  With this topology, the shift maps $\sigma^p$ are local homeomorphisms (cf.~\cite[Remarks 2.5]{kp}).
\end{definitions}

\noindent

Moreover, if we associate to $\Lambda$ the textile system $T_\Lambda$ as in Equation \eqref{eq:textile-from-2graph}, and construct the associated shift of finite type ${\sf X}_{T_\Lambda}^+$ as in Equation \eqref{eq:textiles}, we obtain the following result.

\begin{lemma}
Let $\Lambda$ be a 2-graph with $
\Lambda^{\varepsilon_1} \sqcup \Lambda^{\varepsilon_2}$ finite. Then the shift space $( \Lambda^\infty , \sigma )$ is finite-type.  In particular, we can identify $\Lambda^\infty$ with ${\sf X}^+_{T_\Lambda}$.
\label{lem:same-path-spaces}
\end{lemma}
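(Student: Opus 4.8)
The plan is to construct an explicit shift-equivariant homeomorphism $\Phi$ between $\Lambda^\infty$ and ${\sf X}^+_{T_\Lambda}$, and then invoke Remark \ref{rmk:textilesysissft} to conclude that the latter, and hence the former, is an SFT. Recall from \eqref{eq:textile-from-2graph} that the alphabet of ${\sf X}^+_{T_\Lambda}$ is $F_\Lambda^1 = \Lambda^{\varepsilon_1+\varepsilon_2}$, with range, source, top, and bottom maps $r_F = \ev_{0,\varepsilon_2}$, $s_F = \ev_{\varepsilon_1,\varepsilon_1+\varepsilon_2}$, $p = \ev_{\varepsilon_2,\varepsilon_1+\varepsilon_2}$, and $q = \ev_{0,\varepsilon_1}$. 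I define $\Phi \colon \Lambda^\infty \to (\Lambda^{\varepsilon_1+\varepsilon_2})^{\NN^2}$ by recording the unit squares of an infinite path, $\Phi(x)_n = x(n,\, n+\varepsilon_1+\varepsilon_2)$. The first step is to check that $\Phi(x) \in {\sf X}^+_{T_\Lambda}$. Factoring $x$ appropriately and using functoriality, both sides of the condition (T1) from Definition \ref{dfn:textiletiling} equal $x(v+\varepsilon_1, v+\varepsilon_1+\varepsilon_2)$, namely $s_F(\Phi(x)_v) = x(v+\varepsilon_1, v+\varepsilon_1+\varepsilon_2) = r_F(\Phi(x)_{v+\varepsilon_1})$; condition (T2) holds symmetrically. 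Thus the textile tiling conditions reduce, via the factorization property, to identities between evaluations of $x$ that hold on the nose.

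Next I establish that $\Phi$ is a bijection. Injectivity is immediate: the factorization property expresses every edge $x(\ell, \ell+\varepsilon_i)$ as an evaluation of the unit square $x(\ell, \ell+\varepsilon_1+\varepsilon_2) = \Phi(x)_\ell$, and every $x(m,n)$ is a composite of such edges, so the family $(\Phi(x)_n)_n$ determines $x$ completely. Surjectivity is the crux. Given $y \in {\sf X}^+_{T_\Lambda}$, I assign to each horizontal lattice edge $(n, n+\varepsilon_1)$ the colour-$1$ edge $a_n := \ev_{0,\varepsilon_1}(y_n) = q(y_n)$, and to each vertical lattice edge $(n, n+\varepsilon_2)$ the colour-$2$ edge $b_n := \ev_{0,\varepsilon_2}(y_n) = r_F(y_n)$. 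Conditions (T1) and (T2) guarantee that these labels are consistent where tiles abut; for instance, the top edge $p(y_{n-\varepsilon_2})$ of one tile equals the bottom edge $q(y_n)$ of the tile above it, so consecutive edge-labels compose in $\Lambda$. Meanwhile, the two factorizations $y_n = q(y_n)\,s_F(y_n) = r_F(y_n)\,p(y_n)$ show that the unit square at $n$ maps to a commuting square of $\Lambda$ with common value $y_n$. Because every commuting square is respected, the edge-labelling extends to a unique degree-preserving functor $x \colon \Omega_2 \to \Lambda$ (any two monotone staircase paths from $m$ to $n$ being linked by elementary square flips, each of which preserves the composite), and $x(n, n+\varepsilon_1+\varepsilon_2) = q(y_n)\,s_F(y_n) = y_n$; that is, $\Phi(x) = y$.

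Finally, I check that $\Phi$ is a shift-equivariant homeomorphism. Shift-equivariance is a one-line computation: $\Phi(\sigma^p x)_n = x(n+p,\, n+p+\varepsilon_1+\varepsilon_2) = \Phi(x)_{n+p} = \sigma^p(\Phi(x))_n$. For the topology, note that the cylinder sets $Z(\lambda)$ with $d(\lambda) \geq \varepsilon_1+\varepsilon_2$ form a neighbourhood basis of $\Lambda^\infty$, and each such set fixes exactly the tiles $\Phi(x)_n$ lying in a rectangle, hence corresponds to a basic cylinder set of the product topology on ${\sf X}^+_{T_\Lambda}$; conversely the basic cylinders of ${\sf X}^+_{T_\Lambda}$ pull back to such $Z(\lambda)$. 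Thus $\Phi$ and $\Phi^{-1}$ are continuous, and $\Phi$ identifies $(\Lambda^\infty, \sigma)$ with $({\sf X}^+_{T_\Lambda}, \sigma)$ as shift spaces. Since $\Lambda^{\varepsilon_1} \sqcup \Lambda^{\varepsilon_2}$ is finite and $|\Lambda^{\varepsilon_1+\varepsilon_2}| \leq |\Lambda^{\varepsilon_1}|\,|\Lambda^{\varepsilon_2}|$ by the factorization property, the graph $F_\Lambda$ is finite, so Remark \ref{rmk:textilesysissft} shows ${\sf X}^+_{T_\Lambda}$ is an SFT of the form \eqref{eq:sftdef} with $Z = \{0,1\}^2$. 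Transporting along $\Phi$, the shift space $(\Lambda^\infty, \sigma)$ is finite-type.

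I expect the surjectivity step to be the main obstacle: turning an abstract textile $y$ into a genuine functor on all of $\Omega_2$. The real content there is the well-definedness of $x$, i.e.\ path-independence of the edge-composites, which I anticipate reduces cleanly to the fact that each unit square commutes; and this is automatic, since the two factorizations of $y_n$ coincide. Everything else is bookkeeping with the factorization property and the tiling conditions (T1) and (T2).
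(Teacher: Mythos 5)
Your proposal is correct and follows essentially the same route as the paper: the paper defines $\varphi(y)_n = y(n,n+(1,1))$ (your $\Phi$) and the inverse $\psi$ by assembling the squares of a tiling into morphisms of $\Lambda$, exactly your surjectivity step, and then notes shift-equivariance and preservation of cylinder sets. The only difference is one of detail: the paper asserts the key assembly fact (``the squares $\{x_{m+v}\}$ identify a unique morphism of degree $n-m$'') without spelling out the staircase/path-independence argument that you make explicit.
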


\begin{proof}
Recall that ${\sf X}_{T_\Lambda}^+$ consists of all tilings of $\RR^2_{\geq 0}$, using squares from $F_\Lambda^1 = \Lambda^{\varepsilon_1 + \varepsilon_2}$, which satisfy Definition \ref{dfn:textiletiling}.  In particular, thanks to the factorization property in $\Lambda$ and the fact that adjacent squares in a tiling must coincide on their common edges, whenever $m \leq n$ in $\N^2$ and $x \in {\sf X}_{T_\Lambda}^+$, the squares $\{ x_{m+v}\}_{0\leq v \leq n-m}$ identify a unique morphism in $\Lambda$ of degree $n-m$.  We denote this morphism by $x(m,n),$ for coherence with the definition of $\Lambda^\infty.$ 

Indeed, we can define $\psi: {\sf X}_{T_\Lambda}^+ \to \Lambda^\infty$ by $\psi(x)(m,n) = x(m,n)$ whenever $m \leq n$, and the
inverse map $\varphi: \Lambda^\infty \to {\sf X}_{T_\Lambda}^+$ is given by (for $n \in \N^2$)
\[ \varphi(y)_n = y(n, n + (1,1)). \]
It is straightforward to check that $\psi$ and $\varphi$ are shift commuting and take cylinder sets to cylinder sets.  That is, $\psi$ is a conjugacy, giving the desired identification of $\Lambda^\infty$ with ${\sf X}_{T_\Lambda}^+$ as dynamical systems.
\end{proof}

\begin{remarks}
    [Dynamics results concerning higher-rank graphs] 
    We briefly summarize here some results in the literature related to applications of higher-rank graphs to dynamics.
\begin{enumerate}[leftmargin=19pt,label=(\roman*)]
\item In \cite{Skalski-Zacharias} the authors compute the topological entropy of the shift associated to a higher-rank graph. The factorization property ensures that it is zero. They also compute the entropy induced in the associated $C^*$-algebra (which is also zero).
\item In \cite{KumjianPask3} the authors study the dynamical system arising from a higher-rank graph. They show that the action of shift map on the path space of a  higher-rank graph is an expansive action. Additionally they show that if the higher-rank graph is primitive, then the shift map is topologically mixing. The existence of the Parry measure described in \cite[Proposition 4.2]{KumjianPask3} led to the work by others on the KMS states on higher-rank graphs (cf.~\cite{aHLRS,aHLRS14,FGKP,FGLP}).
\item In \cite{pask-raeburn-weaver} the authors use a set of basic data coming from algebraic information (inspired by the work of Schmidt) to create a set of 2-dimensional tiles. Using a standard overlap condition from dynamics they construct a $2$-graph from such tiles and show that, in many situations, the associated $C^*$-algebras are purely infinite and simple. Moreover, their shift spaces tally with the one coming from the data applied to the Schmidt examples (see \cite{Schmidt}). In \cite{Lewin-Pask} the authors verify that the entropy of these shifts are zero using different techniques to \cite{Skalski-Zacharias}. 
\item In \cite{carlsen-rout-kgraph,brownlowe-carlsen-whittaker,CEOR} the authors consider orbit equivalence of rank one and higher-rank dynamical systems.
\end{enumerate}
    
\end{remarks}

\section{Insplitting for textile systems and rank-2 graphs} \label{sec:2ginsplit}

In the literature, one can find both a definition for insplitting a textile system \cite{johnson-madden} and for insplitting a higher-rank graph \cite{EFGGGP}.  Unfortunately, when the higher-rank graph is the 2-graph associated to an LR textile system as in Theorem \ref{thm:PST}, these definitions do not agree.  We briefly review their definitions before moving on to the main results of this paper in Sections 
\ref{sec:ghinsp-JM} and \ref{sec:JM-ghinsp} below, which address the 
problem of reconciling the two notions of insplitting.

\subsection{Insplitting of textile systems} 
\label{sec:jm-insplits}


\begin{definition}\cite[Definition 2.4.7]{lind-marcus}
    For a directed graph $F$, an {\em insplitting partition} of $F$ is a partition of $r^{-1}(v)\subseteq F^1$, for each $v \in F^0$, into $m(v)$ (nonempty) subsets $\mathcal F_v^1, \ldots, \mathcal F_v^{m(v)}$, for some $m(v) \geq 1$.  The {\em insplit graph} $F_I$ has \[
F_I^0 = \{ v^i : v \in F^0 , 1 \le i \le m(v) \} \text{ and }
F_I^1 =\{ f^j : 1 \leq j \leq m(s(f)) , f \in F^1 \} = \bigcup_{f\in F^1} \bigcup_{i=1}^{m(s(f))} \{ f^i \}  ,
\]
with $s(f^j) = s(f)^j$ and $r(f^j) = r(f)^k$ if $f \in \mathcal F_{r(f)}^k$.
\label{def:graph insplitting}
\end{definition}

\vspace{3mm}

In \cite[\S 3]{johnson-madden}, Johnson and Madden give the definition of  insplitting for textile systems with $E = B_X$ being a one-vertex graph. 
Here we have adapted their definition to textile systems with 
no restrictions on $E$.

\begin{definition}[Textile insplitting] \label{dfn:textile-insplit}
Let $T = (p,q : F \to E )$ be a textile system.  
An {\em insplit textile system} is $T_I = (p_I, q_I: F_I \to E_I)$, where $E_I = E$ and $F_I$ is an insplit of $F$ as in Definition \ref{def:graph insplitting}. 

We have $p_I(f^j) = p(f), p_I(v^i) = p(v)$, and $q_I$ is similarly induced from $q$. 

We sometimes call $T_I$ a {\em Johnson--Madden insplit} of $T$, to distinguish from the 2-graph insplits of Definition \ref{def:2-graph-insplit}.
\end{definition}

\smallskip
\noindent
It is straightforward to check that $p_I, q_I$ are graph homomorphisms, and if $T$ is a textile system, then so is $T_I$.

\begin{remarks}[The effect of insplitting on squares] \label{rmk:upisOK} \hfill
\begin{enumerate}[leftmargin=0.7cm, label=(\roman*)]
 
\item Let $T=(p,q:F \to E)$ be a textile system. 
Suppose square $T_f$ is below square $T_g$, that is $p(f)=q(g)$.  Then this connection persists in any textile insplit $T_I = ( p_I , q_I : F_I \to E )$ of $T$, that is, $p_I ( f^j) = p(f) = q(g) = q_I (g^k)$ regardless of the values of $j,k$.

\item Let $T=(p,q:F \to E)$ be a textile system. Fix a partition $\{ \mathcal{F}_v^i : 1 \le i \le m(v) , v \in F^0 \}$ to give an insplitting $T_I$. Suppose $f,g \in F^1$ are such that $s(f)=r(g)$, so squares $T_f$ and $T_g$ are adjacent, and suppose $g \in \mathcal{F}_{r(g)}^\ell$. In order that $f^j , g^k \in F_I^1$ be  consecutive edges in $T_I$ we must have
\[
s(f)^j= s( f^j )\stackrel{!}{=} r( g^k )= r(g)^\ell ;
\]
that is $j=\ell$.
\end{enumerate}
\end{remarks}

\begin{example} [Textile system insplitting]\label{ex:JMex}
Given the directed graph $F$ shown on the left in \eqref{eq:before-after}. Let $X=\{a,b\}$ and form the textile system $T_{JM} = ( p,q:  F \to B_X )$ with squares
\begin{equation}  \label{eq:jmis1}
\begin{array}{ll}
\begin{tikzpicture}[scale=1]

\begin{scope}[xshift=0cm]

\node[inner sep=0.8pt] (00) at (0,0) {$. $};
\node[inner sep=0.8pt] (01) at (0,1) {$. $};    
\node[inner sep=0.8pt] (10) at (1,0) {$. $};
\node[inner sep=0.8pt] (11) at (1,1) {$. $};

\node[inner sep=1pt, circle] (94) at (0.5,0.5) {\tiny ${e}$};
     
\draw[->] (10)--(00) node[pos=0.5, below] {\tiny\color{black}$a$};
\draw[->] (11)--(01) node[pos=0.5, above] {\tiny\color{black}$a$};
\draw[->] (01)--(00) node[pos=0.5, left] {\tiny\color{black}$u$};
\draw[->] (11)--(10) node[pos=0.5, right] {\tiny\color{black}$u$};

\end{scope}

\begin{scope}[xshift=3cm]

\node[inner sep=0.8pt] (00) at (0,0) {$. $};
\node[inner sep=0.8pt] (01) at (0,1) {$. $};    
\node[inner sep=0.8pt] (10) at (1,0) {$. $};
\node[inner sep=0.8pt] (11) at (1,1) {$. $};

\node[inner sep=1pt, circle] (93) at (0.5,0.5) {\tiny ${f}$};
     
\draw[->] (10)--(00) node[pos=0.5, below] {\tiny\color{black}$b$};
\draw[->] (11)--(01) node[pos=0.5, above] {\tiny\color{black}$b$};
\draw[->] (01)--(00) node[pos=0.5, left] {\tiny\color{black}$v$};
\draw[->] (11)--(10) node[pos=0.5, right] {\tiny\color{black}$u$};

\end{scope}

\begin{scope}[xshift=6cm]

\node[inner sep=0.8pt] (00) at (0,0) {$ .$};
\node[inner sep=0.8pt] (01) at (0,1) {$. $};    
\node[inner sep=0.8pt] (10) at (1,0) {$. $};
\node[inner sep=0.8pt] (11) at (1,1) {$. $};

\node[inner sep=1pt, circle] (92) at (0.5,0.5) {\tiny ${g}$};
     
\draw[->] (10)--(00) node[pos=0.5, below] {\tiny\color{black}$a$};
\draw[->] (11)--(01) node[pos=0.5, above] {\tiny\color{black}$a$};
\draw[->] (01)--(00) node[pos=0.5, left] {\tiny\color{black}$v$};
\draw[->] (11)--(10) node[pos=0.5, right] {\tiny\color{black}$v$};

\end{scope}

\begin{scope}[xshift=9cm]
\node[inner sep=0.8pt] (00) at (0,0) {$. $};
\node[inner sep=0.8pt] (01) at (0,1) {$. $};    
\node[inner sep=0.8pt] (10) at (1,0) {$. $};
\node[inner sep=0.8pt] (11) at (1,1) {$. $};

\node[inner sep=1pt, circle] (91) at (0.5,0.5) {\tiny ${h}$};
    
\draw[->] (10)--(00) node[pos=0.5, below] {\tiny\color{black}$b$};
\draw[->] (11)--(01) node[pos=0.5, above] {\tiny\color{black}$b$};
\draw[->] (01)--(00) node[pos=0.5, left] {\tiny\color{black}$u$};
\draw[->] (11)--(10) node[pos=0.5, right] {\tiny\color{black}$v$};

\end{scope}

\end{tikzpicture} &
\end{array}
\end{equation}

\noindent
Set $m(u)=1,m(v)=2$, and $\mathcal{F}_v^1=\{ g \}$, $\mathcal{F}_v^2=\{f \}$. Then $F_I$ is as shown below on the right. 
\begin{equation} \label{eq:before-after}
\begin{array}{l}
\begin{tikzpicture}[yscale=0.75, >=stealth]

\node[inner sep=0.8pt] (u) at (0,0) {$\scriptstyle u$}
edge[loop,->,in=225,out=135,looseness=15] node[auto,black,swap] {$\scriptstyle e$} (u);

\node[inner sep=0.8pt] (v) at (2,0) {$\scriptstyle v$}
edge[loop,->,in=-45,out=45,looseness=15] node[auto,black,right] {$\scriptstyle g$} (v);

\draw[->] (u.north east)
    parabola[parabola height=0.5cm] (v.north west);

\node[inner sep=6pt, above] at (1,0.5) {$\scriptstyle f$};
\draw[->] (v.south west)
    parabola[parabola height=-0.5cm] (u.south east);

\node[inner sep=6pt, below] at (1,-0.5) {$\scriptstyle h$};

\node at (-2,0) {$F:=$};


\begin{scope}[xshift=7cm,yshift=-1cm]

\node at (-2.2,1.2) {$F_I:=$};

\node[inner sep=0.8pt] (u) at (0,1) {\tiny $u$} edge[loop,->,in=225,out=135,looseness=15] node[auto,black,swap] {$\scriptstyle e^1$} (u);

\node[inner sep=0.8pt] (v1) at (4,0) {\tiny $v^1$} edge[loop,->,in=-45,out=45,looseness=10] node[auto,black,right] {$\scriptstyle g^1$} (v1);
\node (v2) at (4,2) {\tiny $v^2$};

\draw[<-] (v1) -- (v2);

\node at (4.2,1) {\tiny $g^2$};

\draw[<-] (u) -- (v2);
\node at (2,0.7) {\tiny $h^1$};

\node[inner sep=6pt, above] at (2,2.1) {$\scriptstyle f^1$};
\draw[<-] (v2.north west)
    parabola[parabola height=0.5cm] (u.north east);

\draw[->] (v1) -- (u);
\node at (2,1.75) {\tiny $h^2$};

\end{scope}
\end{tikzpicture}
\end{array}
\end{equation}
\noindent 
Definition \ref{dfn:textile-insplit} tells us that the squares associated to $T_{JM_I}$ are as follows:
\begin{equation} \label{eq:jmis2}
\begin{array}{ll}
\begin{tikzpicture}[>=stealth]
\node[inner sep=0.8pt] (00) at (0,0) {$ .$};
\node[inner sep=0.8pt] (10) at (1,0) {$ .$}
	edge[->] node[auto,black] {\tiny $a$} (00);
\node[inner sep=0.8pt] (01) at (0,1) {$ .$}
	edge[->] node[auto,black,swap] {\tiny $u$} (00);
\node[inner sep=0.8pt] (11) at (1,1) {$. $}
	edge[->] node[auto,black,swap] {\tiny $a$} (01)
	edge[->] node[auto,black] {\tiny $u$} (10);
\node[inner sep=0.8pt]  at (0.5,0.5) {$\scriptstyle e^1$};

\begin{scope}[xshift=2.5cm]
\node[inner sep=0.8pt] (00) at (0,0) {$. $};
\node[inner sep=0.8pt] (10) at (1,0) {$. $}
	edge[->] node[auto,black] {\tiny $b$} (00);
\node[inner sep=0.8pt] (01) at (0,1) {$. $}
	edge[->] node[auto,black,swap] {\tiny $v^2$} (00);
\node[inner sep=0.8pt] (11) at (1,1) {$. $}
	edge[->] node[auto,black,swap] {\tiny $b$} (01)
	edge[->] node[auto,black] {\tiny $u$} (10);
\node[inner sep=0.8pt]  at (0.5,0.5) {$\scriptstyle f^1$};

\end{scope}
\begin{scope}[xshift=5.5cm]
\node[inner sep=0.8pt] (00) at (0,0) {$. $};
\node[inner sep=0.8pt] (10) at (1,0) {$ .$}
	edge[->] node[auto,black] {\tiny $a$} (00);
\node[inner sep=0.8pt] (01) at (0,1) {$. $}
	edge[->] node[auto,black,swap] {\tiny $v^1$} (00);
\node[inner sep=0.8pt] (11) at (1,1) {$. $}
	edge[->] node[auto,black,swap] {\tiny $a$} (01)
	edge[->] node[auto,black] {\tiny $v^1$} (10);
\node[inner sep=0.8pt]  at (0.5,0.5) {$\scriptstyle g^1$};

\end{scope}

\begin{scope}[xshift=8cm]
\node[inner sep=0.8pt] (00) at (0,0) {$. $};
\node[inner sep=0.8pt] (10) at (1,0) {$. $}
	edge[->] node[auto,black] {\tiny $b$} (00);
\node[inner sep=0.8pt] (01) at (0,1) {$. $}
	edge[->] node[auto,black,swap] {\tiny $u$} (00);
\node[inner sep=0.8pt] (11) at (1,1) {$. $}
	edge[->] node[auto,black,swap] {\tiny $b$} (01)
	edge[->] node[auto,black] {\tiny $v^1$} (10);
\node[inner sep=0.8pt]  at (0.5,0.5) {$\scriptstyle h^1$};

\end{scope}

\begin{scope}[xshift=10.5cm]
\node[inner sep=0.8pt] (00) at (0,0) {$. $};
\node[inner sep=0.8pt] (10) at (1,0) {$ .$}
	edge[->] node[auto,black] {\tiny $a$} (00);
\node[inner sep=0.8pt] (01) at (0,1) {$. $}
	edge[->] node[auto,black,swap] {\tiny $v^1$} (00);
\node[inner sep=0.8pt] (11) at (1,1) {$ $}
	edge[->] node[auto,black,swap] {\tiny $a$} (01)
	edge[->] node[auto,black] {\tiny $v^2$} (10);
\node[inner sep=0.8pt]  at (0.5,0.5) {$\scriptstyle g^2$};

\end{scope}

\begin{scope}[xshift=13cm]
\node[inner sep=0.8pt] (00) at (0,0) {$. $};
\node[inner sep=0.8pt] (10) at (1,0) {$. $}
	edge[->] node[auto,black] {\tiny $b$} (00);
\node[inner sep=0.8pt] (01) at (0,1) {$. $}
	edge[->] node[auto,black,swap] {\tiny $u$} (00);
\node[inner sep=0.8pt] (11) at (1,1) {$. $}
	edge[->] node[auto,black,swap] {\tiny $d$} (01)
	edge[->] node[auto,black] {\tiny $v^2$} (10);
\node[inner sep=0.8pt]  at (0.5,0.5) {$\scriptstyle h^2$};

\end{scope}

\end{tikzpicture} &
\end{array}
\end{equation}
\end{example}

\begin{remark}
Looking at the tiles in \eqref{eq:jmis1}, one can see that the original textile system is LR. However, \eqref{eq:jmis2} shows that in $T_{JM_I}$, there are two different squares (those labelled $g^1, g^2$) which both have left edge $v^1$ and top edge $a$. Hence $T_{JM_I}$ is not LR. 
Nonetheless, the 
shift spaces associated to  $T$ and $T_{JM_I}$ are conjugate by Theorem \ref{thm:insplit-conjugacy} below. \end{remark}

\noindent
The problem encountered in Example~\ref{ex:JMex} occurs in general.

\begin{theorem}[Insplitting always removes LR]
\label{thm:JM-insplit-not-LR}
Let $T = ( p,q : F \to E )$ be a textile system which is LR. Fix a partition $\{ \mathcal{F}_v^i :  1 \le i \le m(v), v \in F^0 \}$ with $m(v) \ge2$ for some $v$. Then the insplit textile system $T_I$ is not LR.
\end{theorem}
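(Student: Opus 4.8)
The plan is to show that the insplit $T_I=(p_I,q_I:F_I\to E)$ fails to be LR by breaking the unique $r$-path lifting property of $p_I$; concretely, I will exhibit a split vertex and an edge of $E$ that admits \emph{no} $r$-path lift at all. Fix $v\in F^0$ with $m(v)\ge 2$, so that $v$ gives rise to distinct vertices $v^1,\dots,v^{m(v)}\in F_I^0$ and the partition $\mathcal F_v^1,\dots,\mathcal F_v^{m(v)}$ of $r^{-1}(v)$ has at least two nonempty classes.

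First I would produce the edge of $E$ to be lifted. Choosing $f_1\in\mathcal F_v^1$ (nonempty), set $e:=p(f_1)\in E^1$; since $p$ is a graph homomorphism and $r_F(f_1)=v$, we get $r_E(e)=p(r_F(f_1))=p(v)$. By Definition~\ref{dfn:textile-insplit} we have $p_I(v^2)=p(v)=r_E(e)$, so the hypothesis of the $r$-path lifting property for $p_I$ is satisfied at the vertex $v^2$ with respect to $e$.

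The crux is to show that no lift exists at $v^2$. Suppose $g'\in F_I^1$ satisfied $p_I(g')=e$ and $r_{F_I}(g')=v^2$. Every edge of $F_I$ has the form $g^j$ with $g\in F^1$ and $1\le j\le m(s(g))$, and $r_{F_I}(g^j)=r_F(g)^k$ where $g\in\mathcal F_{r_F(g)}^k$; the requirement $r_{F_I}(g^j)=v^2$ therefore forces $r_F(g)=v$ and $g\in\mathcal F_v^2$, while $p_I(g^j)=p(g)=e$. Thus $g$ would be an edge with $r_F(g)=v$ and $p(g)=e=p(f_1)$. Invoking the hypothesis that $T$ is LR, unique $r$-path lifting of $p$ forces $g=f_1$; but $f_1\in\mathcal F_v^1$ and the partition classes are disjoint, so $f_1\notin\mathcal F_v^2$, a contradiction. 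Hence $e$ has no $r$-path lift at $v^2$, so $p_I$ has no $r$-path lifting, and in particular $T_I$ is not LR.

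I expect the main obstacle to be conceptual rather than computational: one must recognize that the right route is to break \emph{existence} of $r$-path lifting, not uniqueness. Arguing via uniqueness (as the discussion after Example~\ref{ex:JMex} does, where $g^1,g^2$ share left edge $v^1$ and top edge $a$) requires an edge $f\in\mathcal F_v^1$ with $m(s(f))\ge 2$, i.e.\ an edge emanating from a split vertex, which the bare hypothesis $m(v)\ge 2$ does not guarantee; the existence argument above needs only $m(v)\ge 2$. The one technical point to handle with care is the range bookkeeping on split edges—that $r_{F_I}(g^j)=r_F(g)^k$ with $k$ the class index of $g$—since this is exactly what pins the putative lift to the class $\mathcal F_v^2$ and lets the unique $r$-path lifting of $p$ (which pins it to $f_1\in\mathcal F_v^1$) produce the contradiction.
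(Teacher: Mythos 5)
Your proof is correct, but it takes a genuinely different route from the paper's. The paper breaks the \emph{uniqueness} half of $r$-path lifting: it fixes an edge $f$ with \emph{source} $v$ (the split vertex), so that $f$ has $m(v)\ge 2$ distinct copies $f^i\neq f^j$ in $F_I$, and observes that these share both their range ($r_I(f^i)=r_I(f^j)=r(f)^k$) and their $p_I$-image ($p(f)$), so $p_I$ is not left-resolving. You instead break the \emph{existence} half: the edge $e=p(f_1)$ with $f_1\in\mathcal F_v^1$ admits no $r$-lift at $v^2$ at all, since any candidate lift $g^j$ would force $g$ to satisfy $r_F(g)=v$, $p(g)=p(f_1)$, hence $g=f_1$ by unique $r$-path lifting of $p$ in $T$, putting $g$ in the wrong partition class. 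The trade-off you identify is real: the paper's argument is shorter, but it silently assumes $s^{-1}(v)\neq\emptyset$, i.e.\ that some edge emanates from the split vertex, and nothing in the hypotheses guarantees this (the theorem does not assume $F$ is essential). Your argument needs only the nonemptiness of the partition classes, which is built into Definition \ref{def:graph insplitting}. Moreover, in the degenerate case where no split vertex has outgoing edges, every edge of $F$ has exactly one copy in $F_I$, and one checks that $r$-lifts in $F_I$, when they exist, are automatically unique; so in that case the uniqueness route cannot be repaired and an existence-failure argument like yours is forced. In this sense your proof is strictly more general and closes a small implicit assumption in the paper's proof.
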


\begin{proof}
Let $v$ be a vertex with $m(v) \ge 2$. Let $1 \le i < j \le m(v)$, and fix an edge $f \in \mathcal{F}^k$ with source $v$. Consider the edges $f^i \neq f^j$ in $F_I$. By definition $r_I ( f^i) = r_I ( f^j ) = r(f)^k$. But $p_I( f^i )= p(f)= p_I( f^j )$ and 
\[
p_I ( r_I ( f^i ) ) = p_I ( r(f)^k ) = r(f).  
\]
Hence $p_I$ is not left-resolving.
\end{proof}

Recall from Definition~\ref{dfn:textiletiling} a textile system $T = ( p,q : F \to E )$ gives rise to a textile tiling $\textsf{X}_T^+$. By Remark~\ref{rmk:textilesysissft} we may consider this as a one-sided 2-dimensional shift of finite type with alphabet $\mathcal{A}=F^1$. The following result was stated in \cite[Lemma 3.4]{johnson-madden} for textile systems with $E = B_X$; for completeness, we include a proof of the general result here.

\begin{theorem}[Textile insplitting induces conjugacy] \label{thm:insplit-conjugacy}
Let $T= (p,q : F \to E )$ be a textile system. Fix a partition $\{ \mathcal{F}_v^i : 1 \le i \le m(v), v \in F^0 \}$ to form an insplitting $T_I$. Then $\textsf{X}_T^+$ is conjugate to $\textsf{X}_{T_I}^+$.
\end{theorem}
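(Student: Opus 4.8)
The plan is to build an explicit conjugacy $\phi\colon \textsf{X}_T^+ \to \textsf{X}_{T_I}^+$ as a block map (in the sense of \eqref{eq:l-block-map}) together with an explicit inverse, and then invoke Lemma~\ref{lem:block-map-cts} to get continuity and shift-commutativity for free. For $f \in F^1$, let $k(f)$ denote the unique index with $f \in \mathcal{F}_{r(f)}^{k(f)}$; this is well defined since the sets $\mathcal{F}_v^i$ partition $r^{-1}(v)$. I would take the window $\ell = \varepsilon_1$ and define the letter map by $\Phi(x[n,n+\varepsilon_1]) = x_n^{k(x_{n+\varepsilon_1})}$, so that $\phi(x)_n = x_n^{k(x_{n+\varepsilon_1})}$. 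In words: each square $x_n$ is replaced by the copy of itself whose source-superscript records the partition class of its right-hand neighbour. The superscript is admissible because horizontal adjacency in $x$ gives $r(x_{n+\varepsilon_1}) = s(x_n)$, so $k(x_{n+\varepsilon_1}) \le m(r(x_{n+\varepsilon_1})) = m(s(x_n))$, which is exactly the range of legal source-superscripts for copies of $x_n$ in $F_I^1$ (Definition~\ref{def:graph insplitting}).

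The next step is to verify that $\Phi$ genuinely satisfies \eqref{eq:l-block-map}, i.e.\ that $\phi(x)$ lands in $\textsf{X}_{T_I}^+$. The key structural observation is that $p_I$ and $q_I$ ignore superscripts, so the vertical constraint $p_I(\phi(x)_w) = q_I(\phi(x)_{w+\varepsilon_2})$ collapses to $p(x_w) = q(x_{w+\varepsilon_2})$, which holds automatically for $x \in \textsf{X}_T^+$. Thus only the horizontal direction carries any content, and the two-dimensional problem reduces to the classical one-dimensional in-splitting argument applied one horizontal row at a time. For the horizontal constraint, $s(\phi(x)_n) = s(x_n)^{k(x_{n+\varepsilon_1})}$ while $r(\phi(x)_{n+\varepsilon_1}) = r(x_{n+\varepsilon_1})^{k(x_{n+\varepsilon_1})}$ by Definition~\ref{def:graph insplitting}; these coincide precisely because $s(x_n) = r(x_{n+\varepsilon_1})$ and the two superscripts are equal.

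For the inverse I would use the $0$-block map $\psi\colon \textsf{X}_{T_I}^+ \to \textsf{X}_T^+$ that erases superscripts, $\psi(y)_n = f$ whenever $y_n = f^j$; compatibility of $r,s,p,q$ on $F$ with their insplit counterparts immediately gives $\psi(y) \in \textsf{X}_T^+$. Clearly $\psi \circ \phi = \id$. The one identity requiring genuine care --- and the crux of the whole argument --- is $\phi \circ \psi = \id$: writing $y_n = f^j$ and $y_{n+\varepsilon_1} = g^l$, horizontal adjacency in $\textsf{X}_{T_I}^+$ forces $s(f)^j = r(g)^{k(g)}$, whence $j = k(g)$, which is exactly the superscript that $\phi$ reinstates. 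This rigidity is what upgrades $\psi$ from a mere left inverse to a two-sided inverse, and it is the only place where the definition of horizontal adjacency in an insplit graph is used in an essential way.

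Finally, since $\phi$ and $\psi$ are mutually inverse block maps, Lemma~\ref{lem:block-map-cts} shows both are continuous and shift-commuting, so $\phi$ is a homeomorphism intertwining the two shift actions, i.e.\ a conjugacy. I expect the construction of $\phi$ and the adjacency checks to be routine; the main (though still mild) obstacle is the verification $\phi \circ \psi = \id$, which hinges on the observation that the source-superscript of $y_n$ is uniquely pinned down by the partition class of the underlying edge of its right neighbour $y_{n+\varepsilon_1}$.
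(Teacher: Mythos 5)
Your proposal is correct and is essentially the paper's own proof: the paper uses exactly the same pair of maps, namely the $0$-block map that erases superscripts and the $\varepsilon_1$-block map $f \mapsto f^j$ where the right-hand neighbour lies in $\mathcal F^j_{s(f)}$, verifies they are mutually inverse via the same rigidity observation (horizontal adjacency in $F_I$ pins the superscript to the partition class of the right neighbour), and concludes via Lemma~\ref{lem:block-map-cts}. The only differences are cosmetic: you label the maps in the opposite direction and spell out the (implicit in the paper) check that the recoding lands in $\textsf{X}_{T_I}^+$.
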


\begin{proof}
Define a $0$-block map $\Phi : B_{0}( \textsf{X}_{T_{I}}^+ ) \to B_0 ( \textsf{X}_T^+ )$ by $\Phi ( f^j)=f$
for $f \in F^1$, $1 \le j \le  m(s(f))$. Let $\phi : \textsf{X}_{T_I}^+ \to \textsf{X}_T^+$ be the induced map. 
To define the inverse map $\psi: \textsf{X}_T^+ \to \textsf{X}_{T_I}^+$, we use an $\varepsilon_1$ block map.
That is, we define $\Psi: B_{\varepsilon_1}(X_T^+) \to B_0(X_{T_I}^+)$ by 
\[ \Psi \begin{pmatrix}
   f & g
\end{pmatrix} = f^j \text{ where }g \in \mathcal F^j_{s(f)}.\]

To see  that $\psi \circ \phi = \text{id}$, let $x \in \textsf{X}^+_{T_I}$.  Then for each $m \in \NN^2, \ x_m = f^{j_m}$ for some $f\in F^1, 1 \leq j_m \leq m(s(f))$. As $s(f^{j_m}) = s(f)^{j_m}$, we must have $r(x_{m+\varepsilon_1}) = s(f)^{j_m},$ that is, 
\[ \Phi(x_{m+\varepsilon_1}) \in \mathcal F^{j_m}_{s(f)} \, \text{ for all }m.\]
Consequently, for all $m$, $ \Psi(\Phi(x_m) \Phi(x_{m +\varepsilon_1})) = \Phi(x_m)^{j_m} = x_m,$ i.e.,~$\psi \circ \phi = \text{id}.$

To see that $\phi \circ \psi = \text{id},$ let $y \in \textsf{X}_T^+.$  For each $m \in \N^2,$ we have $y_{m+\varepsilon_1} \in \mathcal F^{j_m}_{s(y_m)}$ for a unique $j_m$.  We then have  
\[ (\psi(y))_m = y_m^{j_m}\]
and so $(\phi(\psi(y)))_m = y_m;$ that is, $\phi \circ \psi = \text{id}.$  As both $\phi, \psi$ are block maps, Lemma \ref{lem:block-map-cts} and Definition \ref{def:conjugacy} imply that $\phi, \psi$ are conjugacies, as claimed. 

\end{proof}

\subsection{Insplitting for rank two graphs}\label{sec:2-graphinsplit} 

Following \cite[\S 3]{EFGGGP} a $2$-graph insplitting is defined using the $1$-skeleton 
of $\Lambda$. In order that the result is a $2$-graph, the authors of \cite{EFGGGP} impose the  {\em  pairing condition} 
on the insplitting partition.  For ease of notation, in \cite{EFGGGP} insplitting is defined at a single vertex $v$, and  the  insplitting partition consists of exactly two non-empty sets. To capture the full generality of insplitting, 
we extend the definition from \cite{EFGGGP} to the following. 

\begin{definitions}[$2$-graph insplittings, pairing property]\label{dfn:pairing}
Let $\Lambda = G^*/\sim$ be a row-finite $2$-graph with 1-skeleton $(G, d)$. 

\begin{minipage}[l]{0.7\textwidth}
An \emph{insplitting partition} of $G$ 
is a partition $\mathcal G = \{ \mathcal G_v : v \in G^0 \}$ of $G^1$, 
where $\mathcal G_v = \{ \mathcal G^1_v , \ldots , \mathcal G^{m(v)}_v\}$ is a partition of 
$v G^1$ into $m(v)$ nonempty sets  for each $v \in G^0$, which satisfies the 
\emph{pairing condition}:  whenever $a, f \in v G^1$
for some $v \in G^0$ and there exist edges $g ,b\in G^1$ 
such that $ ag  \sim fb$,
then $f \in \mathcal G^j_v$ if and only if $a \in \mathcal G^j_v$. 
\end{minipage}%
\begin{minipage}[l]{0.3\textwidth}
\begin{equation} \label{eq:pairing}
\begin{array}{l}
\begin{tikzpicture}[>=stealth,yscale=1.5,xscale=2]
\node at (0.53,0.5) {\tiny $ag \sim bf $
};
\node (00) at (0,0) {$v$};
\node (10) at (1,0) {$. $}
	edge[->,blue,thick] node[auto,black] {$a$} (00);
\node (01) at (0,1) {$. $}
	edge[->,red,dashed,thick] node[auto,black,swap] {$f$} (00);
\node (11) at (1,1) {$. $}
	edge[->,blue,thick] node[auto,black,swap] {\tiny $\exists b$} (01)
	edge[->,red,dashed,thick] node[auto,black] {\tiny $\exists g$} (10);
\end{tikzpicture}
\end{array}
\end{equation}
\end{minipage}%
\label{def:2-graph-insplit}

\noindent
An insplitting partition of a 2-graph gives rise to a new 2-graph (the {\em insplit 2-graph $\Lambda_I$ of $\Lambda$}), as follows.
Given an insplitting partition $\mathcal G$ of $G$, 
we define a $2$-colored graph $G_I = \big( (G^0_I, G^1_I, r_I, s_I),d_I\big)$, 
where
\begin{align*}
G_I^0 &= \{ v^1, v^2, \ldots, v^{m(v)}: v \in G^0\} 
\text{ and } \,
G^1_I= \bigcup_{f \in 
G^1  }    \{f^1, f^2, \ldots,  f^{m(s(f))} \}
\end{align*}
with $d_I(f^i)= d(f)$ for all $1 \le i \le m(v)$.

\noindent
The range and source maps in the directed graph $G_I$ are defined as follows:
\begin{align*}
r_I ( f^i ) & =  r ( f )^j \text{ where } f \in \mathcal G_{r(f)}^{j}  , \label{eq:rdef} \\
s_I (f^i) & = s(f)^i .   
\end{align*}

The equivalence relation in 
 $G_I$ is given by $f^i g^k \sim_I  a^j b^k$ if and only if $(g \in \mathcal G_{s(f)}^i, b \in \mathcal G_{s(a)}^j)$, and $ab \sim  fg \in \Lambda.$ (The fact that the two paths in a commuting square are required to have the same source forces $g, b$ to have the same superscript.)

\end{definitions}

\begin{remark}
    Requiring that the  partition $\mathcal G$ satisfies the pairing condition is equivalent to specifying that the edges $\lambda(0, \varepsilon_1)$ and $\lambda(0, \varepsilon_2)$ lie in the same partition set $\mathcal G^j_{r(\lambda)}$ for each $\lambda \in \Lambda^{\varepsilon_1 + \varepsilon_2}$.
\end{remark}

The fact that $\Lambda_I := G_I^*/\sim_I$ is indeed a 2-graph is established in the following Theorem.

\begin{theorem}[$2$-graph insplitting produces $2$-graph] \label{thm:inplitcomplete}
Let $\Lambda$ be an essential row-finite $2$-graph with associated $2$-colored graph $(G,d)$. Let $\mathcal G$ be  an insplitting partition of $(G,d)$. 
Then the associated quotient $G_I^*/\sim_I$ is a 2-graph $\Lambda_I$. \end{theorem}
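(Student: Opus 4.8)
The plan is to invoke the characterization of $2$-graphs via $2$-colored graphs recorded just after Definition \ref{def:2-colored-graph}: by \cite[Proposition 4.3]{pask-quigg-raeburn} (cf.\ \cite[Theorems 4.4 and 4.5]{hazle-raeburn-sims-webster}), it suffices to show that the length-two relations $f^i g^k \sim_I a^j b^k$ from Definition \ref{def:2-graph-insplit} form a consistent family of commuting squares---that is, that in each such relation the two paths share the same range, source, and degree---and that this family satisfies the completeness condition (KG2). Conditions (KG0) and (KG1) then come for free, since $\sim_I$ is declared to be trivial on $G_I^0 \cup G_I^1$ and extended multiplicatively. Throughout, essentiality of $\Lambda$ guarantees that $v G^1 \neq \emptyset$ for every $v \in G^0$, so that the insplitting partition (and hence $G_I$ itself) is defined at every vertex, while row-finiteness ensures each $\mathcal G_v$ is a genuine partition.

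First I would dispatch the consistency checks on a single square $f^i g^k \sim_I a^j b^k$, where $ab \sim fg$ in $\Lambda$. Degree is preserved because $d_I(f^i)=d(f)$ by construction. Source is preserved because $s_I(g^k)=s(g)^k = s(b)^k = s_I(b^k)$, using $s(g)=s(b)$ for the commuting square $ab \sim fg$ in $\Lambda$. The one genuinely structural point is range-preservation, $r_I(f^i)=r_I(a^j)$. Writing $v:=r(f)=r(a)$, the relation $ab \sim fg$ exhibits $a,f \in vG^1$ together with a commuting square of precisely the shape required by the pairing condition. Hence the pairing property forces $f$ and $a$ to lie in the same partition set $\mathcal G_v^\ell$, so that $r_I(f^i)=v^\ell=r_I(a^j)$. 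This is exactly the role the pairing condition was designed to play.

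It remains to verify (KG2). Given a composable bicolored path $f^i g^k$ in $G_I$---say $d(f)=\varepsilon_2$, $d(g)=\varepsilon_1$---composability forces $g \in \mathcal G_{s(f)}^i$, so $fg$ is a composable bicolored path in $\Lambda$. Applying (KG2) for the $2$-graph $\Lambda$ produces a unique flip $ab \sim fg$ with $d(a)=\varepsilon_1$ and $d(b)=\varepsilon_2$. I then define $j$ to be the unique index with $b \in \mathcal G_{s(a)}^j$ (which exists since $\{\mathcal G_{s(a)}^\ell\}$ partitions $s(a)G^1 \ni b$). A short check confirms that $a^j b^k$ is a composable path in $G_I$ with the same range and source as $f^i g^k$ and that $a^j b^k \sim_I f^i g^k$; the color-reversed case is symmetric. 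For uniqueness, any competing flip $a'^{\,j'} b'^{\,k'}$ must satisfy $a'b' \sim fg$ in $\Lambda$ and, by the very form of the relation $\sim_I$, must carry the same second-edge superscript $k'=k$; uniqueness of the flip in $\Lambda$ then yields $a'=a$ and $b'=b$, and the partition membership of $b$ pins down $j'=j$.

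The main obstacle---and the heart of the argument---is not the existence or uniqueness of the underlying flip, which are inherited directly from $\Lambda$, but rather the bookkeeping of superscripts: one must check that the flipped path $a^j b^k$ is genuinely a path in $G_I$ with the correct range. It is the pairing condition that reconciles the range-vertex superscript of $a^j$ with that of $f^i$, and it is the requirement that the two legs of a commuting square share a source that forces the second-edge superscript $k$ to be preserved. Once these constraints are seen to be mutually compatible, (KG2) follows, and the cited characterization then delivers that $\Lambda_I = G_I^*/\sim_I$ is a $2$-graph.
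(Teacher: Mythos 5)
Your proposal is correct and follows essentially the same route as the paper's proof: reduce to verifying (KG2) for $\sim_I$, lift the unique factorization $ab \sim fg$ from $\Lambda$, and use the partition membership of $b$ (together with composability forcing $g \in \mathcal G_{s(f)}^i$) to pin down the superscripts of the flipped path. Your additional explicit check that the squares $f^i g^k \sim_I a^j b^k$ preserve range via the pairing condition is left implicit in the paper's argument, but this is a matter of exposition rather than a different method.
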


\begin{proof}
Recall from Remark \ref{rmk:2-skel-2colograph} that we denote by $\sim_I$ the equivalence relation satisfying (KG0) and (KG1) which is generated by the set of commuting squares specified in Definitions \ref{def:2-graph-insplit}. To see that  $G_I^*/\sim_I$ is a 2-graph, 
it suffices to show that every 2-colored path $f^i g^k$ in $G_I$ is equivalent to a unique $2$-colored path $a^j b^k$.  If $f^i g^k \in G_I^2$, then $g \in \mathcal G_{s(f)}^i$; in particular $s(f) =  r(g)$, so $fg \in G^2$.  Since $\Lambda$ is a 2-graph, there is a unique path $ab \in G^2$ with $d(a) = d(g), d(b) = d(f)$ and $ab \sim fg$. In particular, $s(b) = s(g) $ and $r(b) = s(a)$.  There is a unique $j$ such that $b \in \mathcal G_{s(a)}^j$; therefore, $a^j b^k$ is the unique 2-colored path with $d(a^j) = d(g^k)$ and $a^j b^k \sim f^i g^k$.

\end{proof}

We prove in Appendix \ref{sec:$C^*$} that Definitions \ref{def:2-graph-insplit}  yield the same $C^*$-algebraic properties as the original definition of 2-graph insplitting from \cite{EFGGGP}.

\begin{remark}
    \label{rmk:squares-in-insplit}
    Thanks to the definition of the equivalence relation $\sim_I$ given at the end of Definitions \ref{def:2-graph-insplit}, every commuting square in $\Lambda_I$ arises from a (unique) commuting square in $\Lambda$.  In fact, each commuting square $\lambda = a b \sim f g$ in $\Lambda$ yields $m(s(g)) = m(s_\Lambda(\lambda))$ commuting-square ``children'' in $\Lambda_I$.
\end{remark}

\begin{example} [Failing the pairing property]\label{ex:colex}

Consider the $2$-colored graph $G$ below with commuting squares as shown.  Observe that each 2-colored path in $G$ occurs in exactly one commuting square, that is, $\sim$ satisfies (KG2) and $G^*/\sim$ is a 2-graph.
\[
\begin{tikzpicture}
\node[inner sep=0.8pt] (s) at (0,0.5) {$\scriptstyle s$}
edge[blue,loop,->,in=225,out=135,looseness=15,thick] node[auto,black,swap] {$\scriptstyle a$} (s);

\node[inner sep=0.8pt] (t) at (2,0.5) {$\scriptstyle t$}
edge[blue,loop,->,in=-45,out=45,looseness=15,thick] node[auto,black,right] {$\scriptstyle c$} (t);

\draw[blue,->,thick] (s.north east)
    parabola[parabola height=0.5cm] (t.north west);

\node[inner sep=6pt, above] at (1,0.6) {$\scriptstyle b$};
\draw[blue,->,thick] (t.south west)
    parabola[parabola height=-0.5cm] (s.south east);
\node[inner sep=6pt, below] at (1,0.4) {$\scriptstyle d$};

\node[inner sep=6pt, below] at (1,-0.3) {$\scriptstyle v$};
\draw[->,dashed,red,thick] (t.south west)
    parabola[parabola height=-0.7cm] (s.south east);
\draw[->,dashed,red,thick] (s.north east)
    parabola[parabola height=0.7cm] (t.north west);
    \node[inner sep=6pt, above] at (1,1.2) {$\scriptstyle u$};

\begin{scope}[xshift=4cm]

\node[inner sep=0.8pt] (00) at (0,0) {$ $};
\node[inner sep=0.8pt] (10) at (1,0) {$ $}
	edge[->,blue] node[auto,black] {\tiny $c$} (00);
\node[inner sep=0.8pt] (01) at (0,1) {$ $}
	edge[->,dashed,red] node[auto,black,swap] {\tiny $u$} (00);
\node[inner sep=0.8pt] (11) at (1,1) {$ $}
	edge[->,blue] node[auto,black,swap] {\tiny $a$} (01)
	edge[->,dashed,red] node[auto,black] {\tiny $u$} (10);
\node[inner sep=0.8pt]  at (0.5,0.5) {$\scriptstyle e$};
\node[inner sep=0.8pt]  at (-0.07,-0.07) {$\scriptstyle t$};

\end{scope}

\begin{scope}[xshift=7cm]
\node[inner sep=0.8pt] (00) at (0,0) {$ $};
\node[inner sep=0.8pt] (10) at (1,0) {$ $}
	edge[->,blue] node[auto,black] {\tiny $d$} (00);
\node[inner sep=0.8pt] (01) at (0,1) {$ $}
	edge[->,dashed,red] node[auto,black,swap] {\tiny $v$} (00);
\node[inner sep=0.8pt] (11) at (1,1) {$ $}
	edge[->,blue] node[auto,black,swap] {\tiny $b$} (01)
	edge[->,dashed,red] node[auto,black] {\tiny $u$} (10);
\node[inner sep=0.8pt]  at (0.5,0.5) {$\scriptstyle f$};
\node[inner sep=0.8pt]  at (-0.07,-0.07) {$\scriptstyle s$};

\end{scope}
\begin{scope}[xshift=10cm]
\node[inner sep=0.8pt] (00) at (0,0) {$ $};
\node[inner sep=0.8pt] (10) at (1,0) {$ $}
	edge[->,blue] node[auto,black] {\tiny $a$} (00);
\node[inner sep=0.8pt] (01) at (0,1) {$ $}
	edge[->,dashed,red] node[auto,black,swap] {\tiny $v$} (00);
\node[inner sep=0.8pt] (11) at (1,1) {$ $}
	edge[->,blue] node[auto,black,swap] {\tiny $c$} (01)
	edge[->,dashed,red] node[auto,black] {\tiny $v$} (10);
\node[inner sep=0.8pt]  at (0.5,0.5) {$\scriptstyle g$};
\node[inner sep=0.8pt]  at (-0.07,-0.07) {$\scriptstyle s$};

\end{scope}

\begin{scope}[xshift=13cm]
\node[inner sep=0.8pt] (00) at (0,0) {$ $};
\node[inner sep=0.8pt] (10) at (1,0) {$ $}
	edge[->,blue] node[auto,black] {\tiny $b$} (00);
\node[inner sep=0.8pt] (01) at (0,1) {$ $}
	edge[->,dashed] node[auto,black,swap] {\tiny $u$} (00);
\node[inner sep=0.8pt] (11) at (1,1) {$ $}
	edge[->,blue] node[auto,black,swap] {\tiny $d$} (01)
	edge[->,dashed,red] node[auto,black] {\tiny $v$} (10);
\node[inner sep=0.8pt]  at (0.5,0.5) {$\scriptstyle h$};
\node[inner sep=0.8pt]  at (-0.07,-0.07) {$\scriptstyle t$};

\end{scope}

\end{tikzpicture}
\]

\noindent

Since $av\sim vc$ and $du \sim vb$, the pairing condition implies that any insplitting partition $\mathcal G$ will have only one set in the partition $\mathcal G_s$. Similarly, since $ua \sim cu$ and  $ud \sim bv$, there will be only one set in $\mathcal G_t$.  That is, there is no nontrivial 2-graph insplitting of this 2-graph.
\end{example}

\section{2-graph insplitting in terms of Johnson--Madden moves}\label{sec:ghinsp-JM}

If we wish to insplit an LR textile system $T$, we have two options.  Of course, we can perform a textile-system insplitting, which (thanks to Theorem \ref{thm:JM-insplit-not-LR})  will yield a non-LR textile system.  However, we can  alternatively form the associated 2-graph $\Lambda = \Lambda_T$ as in Theorem \ref{thm:PST};  perform a 2-graph insplitting on $\Lambda_T$ to yield a 2-graph $\Lambda_I$; and construct the associated textile system $T_I$ as in \eqref{eq:textile-from-2graph}. Theorem \ref{thm:PST} tells us that $T_I$ will again be LR, that is, it cannot arise from $T$ via a textile insplitting. Indeed, this discrepancy was our inspiration for the research contained here.

Despite this inconsistency, we show in Theorem \ref{thm:6.1} below that we can reconstruct 2-graph insplitting from a sequence of conjugacy-preserving moves (textile-system insplitting and inversion) on the associated textile system.  

\begin{theorem}\label{thm:6.1}
Let $T=(p, q: F\to E)$ be a LR textile system and $\Lambda_T$ be the associated $2$-graph. Let $\mathcal G  = \{ \mathcal G^i_z: 1\leq i \leq m(z) , z \in \Lambda^0 \}$ be an insplitting partition 
of $\Lambda_T$, and let $\Lambda_I$ and $T_I= ( p_I, q_I : F_I \to E_I )$ be the resulting insplit 2-graph and associated textile system. 
Then by performing a Johnson-Madden insplit on $T$; inverting this textile system; performing a second Johnson-Madden insplit; and inverting again we construct a textile system $T_D$ such that $T_I$ and $T_D$ give rise to identical tilings of $\mathbb R_{\geq 0}^2$. 
In particular, $2$-graph insplitting gives a conjugacy of one-sided dynamical systems.
\end{theorem}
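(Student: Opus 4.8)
The plan is to explicitly track how a single 2-graph insplit on $\Lambda_T$ can be manufactured out of the four textile-system moves in the theorem, and then to invoke the conjugacy results already established for each individual move. The key conceptual point is that a 2-graph insplit, as in Definitions~\ref{def:2-graph-insplit}, splits \emph{both} the $\varepsilon_1$-edges and the $\varepsilon_2$-edges of the $1$-skeleton according to the \emph{same} partition of $v G^1$ at each vertex $v$, subject to the pairing condition. In textile-system language via \eqref{eq:textile-from-2graph}, the $\varepsilon_1$-edges are the edges of $E$ and the $\varepsilon_2$-edges are the vertices $F^0$ of the ``top'' graph. A single Johnson--Madden insplit (Definition~\ref{dfn:textile-insplit}) only splits $F$ while leaving $E$ untouched, so it cannot by itself reproduce the simultaneous splitting of both edge-colors that the 2-graph insplit demands. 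The role of the two inversions is precisely to swap the horizontal and vertical directions (Definition~\ref{dfn:inverted-textile}), so that the second Johnson--Madden insplit acts on what was the ``other'' direction; composing insplit--invert--insplit--invert thus splits in both directions while returning the horizontal/vertical roles to their original orientation.

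\textbf{First} I would set up notation carefully, unwinding the insplitting partition $\mathcal G$ of $\Lambda_T$ into its effect on $E^1 = \Lambda^{\varepsilon_1}$ and on $F^0 = \Lambda^{\varepsilon_2}$ at each vertex $z \in \Lambda^0 = E^0$; since $\mathcal G_z$ is a partition of $z G^1 = z(E^1 \sqcup F^0)$, each class $\mathcal G_z^i$ contains some $E$-edges and some $F^0$-``edges.'' \textbf{Next}, I would compute the intermediate textile systems step by step: apply a Johnson--Madden insplit to $T$ using the partition of $F$ induced by $\mathcal G$ (restricted to the vertices of $F$, i.e.\ the elements of $F^0$, partitioned by source), obtaining $T^{(1)}$; invert to get $\widehat{T^{(1)}}$ using Definition~\ref{dfn:inverted-textile}; apply a second Johnson--Madden insplit $T^{(2)}$ dictated by the remaining part of $\mathcal G$ (now splitting what has become the ``top'' graph of the inverted system, which corresponds to the original horizontal $E$-direction); and invert once more to obtain $T_D$. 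At each stage I would read off the squares of the resulting textile system from the square-labeling convention in \eqref{eq:square-labels} together with Remarks~\ref{rmk:upisOK}.

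\textbf{The crux} of the proof is the final comparison: I must show that the squares of $T_D$ and the squares of $T_I$ (the latter computed from $\Lambda_I$ via \eqref{eq:textile-from-2graph} and Remark~\ref{rmk:squares-in-insplit}) are in label-preserving bijection, so that the two textile systems weave identical tilings of $\mathbb{R}_{\geq 0}^2$ in the sense of Definition~\ref{dfn:textiletiling}. Concretely, a commuting square of $\Lambda_I$ has the form $a^j b^k \sim f^i g^k$ coming from a square $ab \sim fg$ of $\Lambda$, with the superscripts governed by the partition classes; I would verify that this matches exactly the square produced by following a single square of $T$ through the insplit--invert--insplit--invert sequence, checking that the edge labels on all four sides agree and that the pairing condition is exactly what guarantees the superscripts are consistently assignable. \textbf{The main obstacle} I anticipate is bookkeeping: the two inversions transpose the roles of $p,q$ and of $r,s$ (cf.\ Definition~\ref{dfn:inverted-textile}), so one must be scrupulous about which direction is being split at each stage and about how the superscripts $i,j,k$ attach to the four sides of a square after transposition. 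The pairing condition from Definitions~\ref{dfn:pairing}—forcing $\lambda(0,\varepsilon_1)$ and $\lambda(0,\varepsilon_2)$ into the same partition class—is what makes the two separate directional insplits compatible, and I expect the proof to hinge on showing that this is precisely the condition needed for the two Johnson--Madden insplits to glue into a single coherent 2-graph insplit.

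\textbf{Finally}, the dynamical conclusion is immediate: by Theorem~\ref{thm:insplit-conjugacy} each Johnson--Madden insplit induces a conjugacy $\textsf{X}_T^+ \cong \textsf{X}_{T_I}^+$, and by the Lemma following Definition~\ref{dfn:inverted-textile} each inversion $\theta$ is a conjugacy with $\theta(\textsf{X}_T^+) = \textsf{X}_{\widehat{T}}^+$. Composing these four conjugacies yields a conjugacy $\textsf{X}_T^+ \cong \textsf{X}_{T_D}^+ = \textsf{X}_{T_I}^+$, and Lemma~\ref{lem:same-path-spaces} identifies $\textsf{X}_{T_I}^+$ with $\Lambda_I^\infty$; hence 2-graph insplitting induces a conjugacy of the associated one-sided two-dimensional shifts of finite type.
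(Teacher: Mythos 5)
Your four-move outline (insplit--invert--insplit--invert, with both Johnson--Madden partitions induced from $\mathcal G$) and your concluding paragraph composing the conjugacies of Theorem~\ref{thm:insplit-conjugacy} and of inversion are exactly the paper's strategy. However, your crux step contains a genuine gap: you assert that the squares of $T_D$ and the squares of $T_I$ are in label-preserving bijection, and this is false whenever some $m(z)\geq 2$. There is a counting obstruction: each square $\lambda\in F^1$ of $T$ acquires an \emph{independent} superscript from each of the two Johnson--Madden insplits (the first splits $\lambda$ into copies $\lambda^i$, and after inversion the second splits each $\lambda^i$ into copies $(\lambda^i)^j$), so $T_D$ has exactly $\sum_\lambda m(p(s(\lambda)))^2$ squares, whereas by Remark~\ref{rmk:squares-in-insplit} the insplit 2-graph $\Lambda_I$ (hence $T_I$) has only the $\sum_\lambda m(p(s(\lambda)))$ ``diagonal'' children $\lambda^i$. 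No bijection of alphabets can exist, and indeed the paper stresses after its proof that $T_D$ and $T_I$ are \emph{not} the same textile system --- which is precisely why the theorem is phrased in terms of identical tilings rather than isomorphic systems.

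The missing idea is that the off-diagonal squares $(\lambda^i)^j$ with $i\neq j$ are \emph{inessential}: they never occur in any point of $\textsf{X}_{T_D}^+$. This is where the pairing condition of Definitions~\ref{dfn:pairing} actually does its work --- not, as you suggest, to make superscripts ``consistently assignable'' in a bijection, but to rule out the extra squares dynamically: if a tiling contained $(\lambda^i)^j$ with $i\neq j$, the square above it forces the class of the shared edge $p(\lambda)$ to be $j$, the square to its right forces the class of the shared edge $s_F(\lambda)$ to be $i$, and the pairing condition then guarantees there is no square of $F^1$ whose left edge lies in class $j$ and whose bottom edge lies in class $i$, so the top-right corner of that $2\times 2$ block can never be filled. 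Hence every tiling by $T_D$ uses only the diagonal squares, and the relabeling $(\lambda^i)^i\mapsto\lambda^i$ identifies $\textsf{X}_{T_D}^+$ with $\textsf{X}_{T_I}^+$. With this argument inserted at your crux, the remainder of your proposal (including the final composition of conjugacies and the identification of $\Lambda_I^\infty$ with $\textsf{X}_{T_I}^+$ via Lemma~\ref{lem:same-path-spaces}) goes through as written.
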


\begin{proof}

We will first perform four conjugacy-preserving textile-system moves on the textile system $T$ and then compare the resulting shift space with the one associated to the textile system $T_I$ associated to the $2$-graph insplitting on $\Lambda_T$. 

 \noindent\textbf{Step 1}   
Recall $\Lambda_T^0 = E^0$. By hypothesis  we can insplit the associated 2-graph $\Lambda_T$  using the partition $\{\mathcal{G}^i_z: 1\leq i\leq m(z) , \ z \in E^0 \}$ of $
\Lambda_T^1= E^1 \sqcup F^0$. We use this partition to create a partition of $F^1 = \Lambda_T^{\varepsilon_1 + \varepsilon_2}$ as follows: Fix $v\in F^0$, and write $z = p(v)$.  For each $1\leq i\leq m(z)$, we define 
\begin{equation}
\label{eq:first-JM-insplit} 
\mathcal F^i_v= \{ \lambda : \lambda \in F^1 ,  \ r ( \lambda ) = v  , \ p(\lambda) \in \mathcal G_z^i  \} .
\end{equation}
\noindent
The collection $\{\mathcal{F}_v^i: v\in F^0, 1\leq i\leq m(p(v))\}$ gives a partition of $F^1$. 
Use this partition to perform an insplitting of $T$ as in Definition~\ref{dfn:textile-insplit}, resulting in the textile system $T_A:= (p_A , q_A : F_A \to E_A= E )$, where
  \begin{multline*}
F_A^0 = \{ v^i : v \in F^0 , 1 \le i \le m(p(v)) \}  
\text{ and } F_A^1 = \{ {\lambda^i}
: \lambda \in F^1,\ 
1 \le i \le {m(p(s(\la)))} \} .
\end{multline*}

\noindent
{We have $p_A ( \lambda^i ) = p( \lambda )$ and $q_A ( \lambda^i ) = q ( \lambda )$, with $s_A ( \lambda^i ) = s (\lambda)^i$ and $r_A ( \lambda^i )=r(\la)^k= v^k$ where $\lambda \in \mathcal{F}_{v}^k$.}

 \noindent\textbf{Step 2} Recall the definition of inverted textile system from Definition~\ref{dfn:inverted-textile}. The inverted textile system $T_B:= (p_B, q_B: F_B \to E_B )$ from $T_A$ has $F_B^0= E_A^1= E^1$, $E_B^0= E_A^0= E^0$,
\[ 
F_B^1= \{ {\lambda^i}
: \lambda \in F^1, \ 
1 \le i \le m(p(s(\la)))\}, \ \text{ and } \ E_B^1= F_A^0 = \{ v^i : 1 \le i \le m(p(v)) \}, 
\]

\noindent
 with $r_{F_B}( {\lambda^i}
 )= q_A({\lambda^i} 
 ) =q(\lambda)$, $s_{F_B}({\lambda^i} 
 )= p_A({\lambda^i} 
 )=p(\lambda)$, $s_{E_B}(u^i) = p_A(u^i)=p(u)$, and $r_{E_B}(u^i)= q_A(u^i)= q(u)$. Similarly, $p_B ({\lambda^i} 
 )= s_A({\lambda^i} 
 ) = s (\lambda)^i, \ q_B ({\lambda^i} 
 ) = r_A ( {\lambda^i} 
 ) = r (\lambda)^\ell$ if $\lambda \in \mathcal{F}^\ell_{v}$.   We also have  $p_B(e) = s(e)$ and  $q_B(e) = r(e).$

 \noindent\textbf{Step 3} Recall the partition  $\{\mathcal{G}^i_z: z \in E^0 ,  \ 1\leq i\leq m(z)\}$ of $\Lambda_T^{1} = E^1 \sqcup F^0$ from Step 1. In a similar way to Step 1, we use this partition to create a partition of $F_B^1$ 
as follows: Fix $z \in E^0$ and  $e \in F_B^0 = E^1 $ with $p_B(e)= z$ and $1\le j \le m(z)$. Parallel to Step 1, we would like to define $\mathcal{H}^j_e = \{\lambda^i \in F_B^1 : \ e =  r_{F_B} ( \lambda^i ), \ p_B (  \lambda^i) \in \mathcal G^j_z\}$; however, $p_B(\lambda^i) = s(\lambda)^i$ is not in $\mathcal G^j_z \subseteq E^1 \sqcup F^0.$  Hence we define 
\begin{align*} \label{eq:gmedef}
\mathcal{H}^j_e &= 
\{\lambda^i \in F_B^1 : \ e =  r_{F_B} ( \lambda^i ), \ s(  \lambda)
\in \mathcal G_{p_B(e)}^j 
\} \\
& =\{ \lambda^i : \lambda \in F^1 , \ e = q( \lambda ) , \ s ( \lambda )  \in  \mathcal G_{s(e)}^j,  
 { 1 \leq i \leq m(p(s(\lambda)))}\}.
\end{align*} 

 We observe that $\mathcal H_e^j$ is always nonempty, since $T$ is LR and $\mathcal G$ is a 2-graph insplitting partition.  In particular, in each $\mathcal G^j_z$, we have at least one $w \in F^0$ and one $\wt e \in E^1.$  Thus, given $e \in E^1$ with $p_B(e) = s(e) = z$ and $1 \leq j\leq m(z),$ choose $w \in \mathcal G^j_z \cap F^0.$  Then $q(w) = z = s(e)$ (cf.~Definition \ref{dfn:2colgraph from T}).  Consequently, the unique $s$-path lifting of $q$ implies that there is a unique $\lambda \in F^1$ with $q(\lambda) = e$ and $s(\lambda) = w,$ and we have $\lambda^i \in \mathcal H^j_e$ for all $1\leq i\leq m(s(p(\lambda))).$ 

For every $\lambda \in F^1$, $s(\lambda)
$ lies in a unique $\mathcal G^j_{s(q(\lambda))}$, so 
$\{\mathcal{H}^j_e: e\in F_B^0, 
1 \leq j \leq m(p_B(e))\}$ forms a partition of $F_B^1$. We thus perform an insplitting using this partition and get the textile system $T_C:= (p_C, q_C : F_C\to E_C$). Then $F_C^0= \{ e^j: e\in F_B^0= E^1, 1 \le j \le m(s(e)) \}$, and 
\begin{align*}
F_C^1 = \{ ( \lambda^i )^j : \lambda \in F^1 , \ e = q_A ( \lambda^i ) , \  & s(\lambda) \in  \mathcal G_{s(e)}^k , \\
& 1 \leq i  \leq m(s(p(\lambda))),  
 1 \leq j \leq m(p_B(s_B(\lambda^i))) = m(s(p(\lambda))) \},
\end{align*}
and $E_C= E_B$ given in Step 2.
We have $s_{C} (( \lambda^i )^j ) = s_{F_{B}}(\lambda^i)^j$, $r_C((\lambda^i )^j )= r_{F_{B}}( \lambda^i )^{\ell}$, where $\lambda^i \in \mathcal{G}^\ell_{e}$, and $p_{C}((\lambda^i )^j )= p_B( \lambda^i)$, $q_C(( \lambda^i )^j)= q_B(\lambda^i) $. 

 \noindent\textbf{Step 4} Now we invert the textile system $T_C$ to get $T_D:= (p_D, q_D: F_D \to E_D)$ where $F_D^0 = E_C^1 = E_B^1= F_A^0$,
\[
F_D^1 = F_C^1  = \{(\lambda^i)^j : \lambda^i \in F_A^1  \ , 1 \leq i , j \leq m(p(s(\lambda))) 
\},  
\]
 and $E_D^1 = F_C^0$. In the graph $F_D$, we have $r_{F_D}((\lambda^i )^j) = q_C((\lambda^i)^j) = r(\lambda)^\ell$ if $p(\lambda) \in \G^\ell_{p(r(\lambda))},$ $s_{F_D}(( \lambda^i)^j) = p_C ((\lambda^i)^j) = s(\lambda)^i$; and on $E_D$, we have $r_{E_D} (e^j) = q_C (e^j) = q_B(e) = r(e)$, $s_{E_D}(e^j) = p_C (e^j) = p_B(e) = s(e)$. Finally, we have the graph homomorphisms $p_D((\lambda^i)^j )= s_C(( \lambda^i)^j ) = (s_B(\lambda^i))^j = p(\lambda)^j$ and $q_D((\lambda^i)^j)= r_C((\lambda^i)^j) = (r_B(\lambda^i))^k= q(\lambda)^k$ if $s(\lambda) \in \G^k_{s(q(\lambda))}$.

In order to compare the resulting textile system $T_D$ with the 2-graph $\Lambda_I$,
recall that in a textile system, the quadruple $(p, q, r, s)$ is injective.  Thus, we can visualize the textile system $T_D$ by describing the boundary of each square $(\lambda^i)^j \in F_D^1$ (for $1 \leq i, j \leq m(p(s(\lambda))))$.    Unpacking the definitions above, we see that 
 
\begin{align*}
p_D ((\lambda^i)^j ) &= s_C (( \lambda^i)^j )= s_{F_B} ( \lambda^i)^j =p_A (\lambda^i)^j = p (\lambda)^j, \\
q_D ((\lambda^i)^j ) &= r_C ( (\lambda^i)^j ) =  r_{F_B} (\lambda^i)^k = 
q ( \lambda )^k, \text{ where $\lambda^i\in \mathcal{H}^k_{q(\lambda)}$, i.e., }s_F(\lambda) \in \mathcal G^k_{s(q(\lambda))}, \\
r_D(( \lambda^i)^j ) &= q_C ( (\lambda^i)^j ) = q_B ( \lambda^i ) = 
r_A (\lambda^i ) = r_F (\lambda )^\ell \text{ where $\lambda\in \mathcal{F}^\ell_{r(\lambda)}$, i.e., }p(\lambda) \in \mathcal G^\ell_{p(r(\lambda))}, \\
s_D ((\lambda^i)^j ) &= p_C ( (\lambda^i)^j ) = p_{B} ( \lambda^i ) = s_A (\lambda^i ) = s_F(\lambda )^i.
\end{align*}
Pictorially, we have the square 
\[ 
\begin{tikzpicture}[>=stealth][scale=1.2]
\node[inner sep=0.8pt] (00) at (0,0) {$ .$};
\node[inner sep=0.8pt] (10) at (1,0) {$ .$}
	edge[->] node[auto,black] {\tiny $q(\lambda)^k $} (00);
\node[inner sep=0.8pt] (01) at (0,1) {$. $}
	edge[->] node[auto,black,swap] {\tiny $ r_F(\lambda)^\ell$} (00);
\node[inner sep=0.8pt] (11) at (1,1) {$. $}
	edge[->] node[auto,black,swap] {\tiny $p(\lambda)^j$} (01)
	edge[->] node[auto,black] {\tiny $s_F(\lambda)^i$} (10);
\node[inner sep=0.8pt]  at (0.5,0.5) {\tiny $ (\lambda^i)^j$};
\end{tikzpicture}
\]

Now, consider the result of performing 2-graph insplitting on $\Lambda_T$ with respect to the partition $\{ \mathcal G^i_z: z \in \Lambda^0 = E^0, 1\leq i \leq m(z)\}$.

Given $\lambda \in F^1 = \Lambda^{\varepsilon_1 + \varepsilon_2}$, write  $\lambda = q(\lambda) s_F(\lambda) \sim r_F(\lambda) p(\lambda)$ with $s_F(\lambda) \in \mathcal G_{s(q(\lambda))}^k$ and $p(\lambda) \in \mathcal G_{r(p(\lambda))}^\ell$.  Recall from Remark \ref{rmk:squares-in-insplit} that every commuting square in $\Lambda_I$ arises from a commuting square in $\Lambda$, and each $\lambda \in \Lambda^{\varepsilon_1 + \varepsilon_2}$ yields $m(s(p(\lambda)))$ ``children'' in $\Lambda_I^{\varepsilon_1 + \varepsilon_2}$. Indeed, the children of the aforementioned $\lambda$ are  $\{ r_F(\lambda)^\ell p(\lambda)^i \sim_I q(\lambda)^k s_F(\lambda)^i \}_{i=1}^{m(s(p(\lambda))}$.  That is, in $\Lambda_I$
we have the commuting squares
\[ 
\begin{tikzpicture}[>=stealth][scale=1.2]
\node[inner sep=0.8pt] (00) at (0,0) {$. $};
\node[inner sep=0.8pt] (10) at (1,0) {$ .$}
	edge[->] node[auto,black] {\tiny $q(\lambda)^k $} (00);
\node[inner sep=0.8pt] (01) at (0,1) {$ .$}
	edge[->] node[auto,black,swap] {\tiny $ r_F(\lambda)^\ell$} (00);
\node[inner sep=0.8pt] (11) at (1,1) {$. $}
	edge[->] node[auto,black,swap] {\tiny $p(\lambda)^i $} (01)
	edge[->] node[auto,black] {\tiny $s_F(\lambda)^i$} (10);
\node[inner sep=0.8pt]  at (0.5,0.5) {$ \lambda^i$};
\end{tikzpicture}
\]
for $1 \leq i \leq m(s(p(\lambda)))$.

While all of the squares from the 2-graph insplitting  appear in $T_D$ constructed earlier, we also have many ``extra'' squares in $T_D$, namely $\{ (\lambda^i)^{j}: i\not= j\}$. However, we claim that none of these extra squares will show up in any $2\times 2$ block; in particular, they will not appear in the infinite path space $X_D$. To establish the claim, suppose that we have a $2\times 2$ block in ${\sf X}_D^+$ of the form 
\[
\begin{tikzpicture}[scale=1.5,->=stealth]
        
\node[inner sep=1pt, circle] (10) at (1,0) {$.$};
\node[inner sep=1pt, circle] (11) at (1,1) {\tiny $. $};

\node[inner sep=1pt, circle] (22) at (2,2) {$.$};
\node[inner sep=1pt, circle] (21) at (2,1) {\tiny $.$};
        
\node[inner sep=1pt, circle] (30) at (3,0) {$.$};
\node[inner sep=1pt, circle] (31) at (3,1) {$.$};
    
\node[inner sep=1pt, circle] (12) at (1,2) {$.$};
\node[inner sep=1pt, circle] (20) at (2,0) {$.$};
\node[inner sep=1pt, circle] (32) at (3,2) {$.$};
    
    
\draw (11)--(10) node[pos=0.5, left] {\tiny\color{black}$ $};
    
    
\draw (20)--(10) node[pos=0.5, below] {\tiny\color{black}$ $};
\draw (21)--(11) node[pos=0.5, below] {\tiny\color{black}$ $};
\draw (21)--(20) node[pos=0.5, left] {\tiny\color{black}$ $};
    
    
\draw (30)--(20) node[pos=0.5, below] {\tiny\color{black}$ $};
\draw (31)--(21) node[pos=0.5, below]{\tiny\color{black}$ $};
\draw (31)--(30) node[pos=0.5, left] {\tiny\color{black}$ $};
    

\node at (1.5,1.2) {\tiny $ $};
\node at (1.5, 0.4) {\tiny $\lambda^{i^j}$};
\node at (2.5, 0.5) {\tiny $\mu^{k^n}$};

\node at (2.5,1.2) {\tiny $ $};
\node at (1.5,1.6) {\tiny $\eta^{a^b}$};

\draw (12)--(11) node[pos=0.5, right] {\tiny\color{black}$ $};
    
    
\draw (22)--(12) node[pos=0.5, below] {\tiny\color{black}$ $};

\draw (22)--(21) node[pos=0.5, left] {\tiny\color{black}$  $};

\draw (32)--(22) node[pos=0.5, below] {\tiny\color{black}$ $};
  \draw (32)--(31) node[pos=0.5, left] {\tiny\color{black}$ $};
\end{tikzpicture}
\]
where $i \neq j$. Then the superscript on the edge $q(\eta)= p(\lambda)$ in the diagram above must be $j$, that is, {$\eta^a \in \mathcal H^j_{q(\eta)}$}, and so $s(\eta)\in \mathcal G_z^j$ for some $z\in E^0$. Similarly, the superscript  on the edge $r(\mu)= s(\lambda)$ in the diagram above must be $i$, that is, $\mu \in \mathcal F_{r(\mu)}^i$, or in other words, $p(\mu) \in \mathcal G_{z'}^i$ for some $z'\in E^0$. Since the sets $\mathcal G_z^i$ satisfy the pairing condition, if $i \neq j$ there is no square in $\Lambda^{\varepsilon_1 + \varepsilon_2} = F^1$ whose left edge lies in $\mathcal G^j$ and whose bottom edge lies in $\mathcal G^i$.  Consequently, there is no edge in $F_D^1$ that will fill in the unlabeled top-right square of this $2\times 2$ block. In other words, no tiling in ${\sf X}_D^+$ will contain any of the squares $\lambda^{i^j}$ with $i\neq j$; so the map $\lambda^{i^i} \mapsto \lambda^i$ gives the desired relabeling which identifies ${\sf X}_D^+$ with ${\sf X}_I^+$. 

We can also view this relabeling in the setting of textile systems, as follows.  Define $F_D' = (F_D^0, \{ \lambda^{i^i}: \lambda \in F^1, 1 \leq i \leq m(p(s(\lambda)))\}, r_D, s_D).$ Then the restrictions of $p_D, q_D$ to $F_D'$ are still graph homomorphisms, so 
$T':= (p_D, q_D : F'_D \to  E_D)$ is a textile system.  Moreover, as we observed above, ${\sf X}_D^+ = {\sf X}_{T'}^+ = {\sf X}_{T_I}^+$.

\end{proof}

\begin{remark*}
Theorem \ref{thm:6.1} can be summarized as the following commuting diagram. Note that we do not claim that the textile systems $T_D$ and $T_I$ are identical but the corresponding infinite path spaces are, in the sense that the diagram commutes at the level of the infinite path spaces.

\begin{center}
\begin{tikzcd}
T \arrow[r, "JM"] \arrow[d, leftrightsquigarrow] & T_A \arrow[r, "inv"] & T_B \arrow[r, "JM"] & T_C \arrow[r, "inv"] & T_D \arrow[d, leftrightsquigarrow] \\
\Lambda \arrow[rrrr, "insplit"] & & & & \Lambda_I 
\end{tikzcd}\end{center}
\end{remark*}

\begin{example}\label{eq:the example}
Consider the LR textile system $T= (p, q: F \to E)$, where the graphs $F, E$ are depicted below. \[
\begin{tikzpicture}
    
\begin{scope}[xshift=-4cm,yshift=-1cm]
\node at (-2,1) {$F:=$};

\node[inner sep=0.8pt] (u) at (0,1) {\tiny $f_1$} edge[loop,->,in=225,out=135,looseness=10] node[auto,black,swap] {$\scriptstyle \lambda_1$} (u);

\node[inner sep=0.8pt] (v1) at (4,1) {\tiny $f_3$} edge[loop,->,in=-45,out=45,looseness=10] node[auto,black,right] {$\scriptstyle \lambda_4$} (v1);
\node (v2) at (2,2) {\tiny $f_2$};

\draw[<-] (v1) -- (v2);

\node at (3,1.75) {\tiny $\lambda_3$};

\draw[->] (u) -- (v2);


\node at (1,1.75) {\tiny $\lambda_2$};

\end{scope}


    
\begin{scope}[xshift=4.5cm,yshift=0cm]

\node at (-1.9,0) {$E:=$};

\node[inner sep=0.8pt] (v) at (0,0) {$\scriptstyle v$}
edge[loop,->,in=225,out=135,looseness=15] node[auto,black,swap] {$\scriptstyle e_1$} (v);

\node[inner sep=0.8pt] (w) at (2,0) {$\scriptstyle w$} 
edge[loop,->,in=-45,out=45,looseness=15] node[auto,black,right] {$\scriptstyle e_3$} (w);

\draw[->] (v.north east)
    parabola[parabola height=0.5cm] (w.north west);

\node[inner sep=6pt, above] at (1,0.5) {$\scriptstyle e_2$};

\end{scope}
\end{tikzpicture}
\]
The graph homomorphisms $p$ and $q$ are given by: $p(\lambda_1)= p(\lambda_2)= e_1$, $p(\lambda_3)= e_2$, $p(\lambda_4)= e_3$, $p(f_1)= p(f_2)= v$ and $p(f_3)= w$; $q(\lambda_1)= e_1$, $q(\lambda_2)= e_2$, $q(\lambda_3)= q(\lambda_4)= e_3$, and $q(f_1)= v$, $q(f_2)= q(f_3)= w$.
The associated 2-graph $\Lambda = \Lambda_T$ 
was described in \cite[Example 6.1]{EFGGGP} and 
has the following 1-skeleton:
\begin{equation} \label{ex:6.1 EF+}
\begin{array}{l}
\begin{tikzpicture}[yscale=0.75, >=stealth]

\node[inner sep=0.8pt] (u) at (0,0) {$\scriptstyle v$}
edge[blue,loop,->,in=225,out=135,looseness=15,thick] node[auto,black,left] {$\scriptstyle e_1$} (u)
edge[blue,loop,->,in=225,out=135,looseness=30,dashed,red,thick] node[auto,black,left] {$\scriptstyle f_1$} (u);

\node[inner sep=0.8pt] (v) at (2,0) {$\scriptstyle w$}
edge[blue,loop,->,in=-45,out=45,looseness=15,thick] node[auto,black,right] {$\scriptstyle e_3$} (v)
edge[loop,->,in=-45,out=45,looseness=30,dashed,red,thick] node[auto,black,right] {$\scriptstyle f_3$} (v);

\draw[->,dashed,red,thick] (u.north east)
    parabola[parabola height=0.5cm] (v.north west);
\node[inner sep=6pt, above] at (1,0.5) {$\scriptstyle f_2$};

\draw[blue,<-,thick] (v.south west)
	parabola[parabola height=-0.5cm] (u.south east);
\node[inner sep=6pt, below] at (1,-0.5) {$\scriptstyle e_2$};

\end{tikzpicture}
\end{array}
\end{equation}
Observe that if we define \begin{equation}
\label{eq:2graph-insplit}
    \mathcal{G}_v^1= \{f_1, e_1\}, \, 
    \mathcal{G}_w^1= \{f_2, e_2\}, \, \mathcal{G}_w^2= \{f_3, e_3\},
\end{equation}
then the partitions $\{\mathcal{G}_v^1 \}, \{\mathcal{G}_w^1, \mathcal{G}_w^2\}$ of $v\Lambda^1, w\Lambda^1$ satisfy the pairing condition.  Insplitting $\Lambda_T$ using this partition yields 
 the 2-graph $\Lambda_I$ 
with 1-skeleton 
\begin{equation} \label{ex:2-graph LambdaI}
\begin{array}{l}
\begin{tikzpicture}[yscale=0.75, >=stealth]

\node[inner sep=0.8pt] (v1) at (0,0) {$\scriptstyle v$}
edge[loop,->, dashed,red, in=225,out=135,looseness=15] node[auto,black,swap] {$\scriptstyle f_1^1$} (v1) edge[loop,->, blue, in=230,out=130,looseness=30] node[auto,black,swap] {$\scriptstyle e_1^1$} (v1);

\node[inner sep=0.8pt] (w1) at (2,0) {$\scriptstyle w^1$};


\node[inner sep=0.8pt] (w2) at (2,-1.75) {$\scriptstyle w^2$}
edge[loop,->,dashed,red,in=-45,out=45,looseness=10] node[auto,black,right] {\color{red}{$\scriptstyle f_3^2$}} (w2)
edge[loop,->,blue,in=-50,out=50,looseness=20] node[auto,black,right] {$\scriptstyle e_3^2$} (w2);;

\draw[->, dashed, red] (v1.north east) to [bend left] (w1.north west);

\node[inner sep=3pt, above] at (1,0.5) {{\color{red}$\scriptstyle f_2^1$}};

\node at (2.5,-0.75) {{\color{red} \tiny $\scriptstyle f_3^1$}};

\draw[<-, blue] (w1.south west) to [bend left] (v1.south east);
\node[inner sep=3pt, below] at (1,0.1) {$\scriptstyle e_2^1$};


\draw[->, blue] (w1.265) to [bend right](w2.150);

\draw[->, dashed, red] (w1.285) to [bend left](w2);

\node at (1.5,-0.8) {\tiny {\color{blue} \tiny $e_3^1$}};

\end{tikzpicture}
\end{array}
\end{equation}
with the commuting squares $\lambda_j^i$ for $i\leq 2$ and $j= 1, 2, 3, 4$:

\[
\begin{tikzpicture}
\begin{scope}[xshift=2cm]
\node[inner sep=0.8pt] (00) at (0,0) {$. $};
\node[inner sep=0.8pt] (10) at (1,0) {$. $}
	edge[->,blue] node[auto,black] {\tiny $e_1^1$} (00);
\node[inner sep=0.8pt] (01) at (0,1) {$. $}
	edge[->,dashed,red] node[auto,black,swap] {\tiny $f_1^1$} (00);
\node[inner sep=0.8pt] (11) at (1,1) {$. $}
	edge[->,blue] node[auto,black,swap] {\tiny $e_1^1$} (01)
	edge[->,dashed,red] node[auto,black] {\tiny $f_1^1$} (10);
\node[inner sep=0.8pt]  at (0.5,0.5) {$\scriptstyle \lambda_1^1$};
\end{scope}

\begin{scope}[xshift=7cm]
\node[inner sep=0.8pt] (00) at (0,0) {$. $};
\node[inner sep=0.8pt] (10) at (1,0) {$. $}
	edge[->,blue] node[auto,black] {\tiny $e_2^1$} (00);
\node[inner sep=0.8pt] (01) at (0,1) {$. $}
	edge[->,dashed,red] node[auto,black,swap] {\tiny $f_2^1$} (00);
\node[inner sep=0.8pt] (11) at (1,1) {$. $}
	edge[->,blue] node[auto,black,swap] {\tiny $e_1^1$} (01)
	edge[->,dashed,red] node[auto,black] {\tiny $f_1^1$} (10);
\node[inner sep=0.8pt]  at (0.5,0.5) {$\scriptstyle \lambda_2^1$};
\end{scope}

\end{tikzpicture}
\]

\[
\begin{tikzpicture}
\begin{scope}[xshift=3cm]
\node[inner sep=0.8pt] (00) at (0,0) {$. $};
\node[inner sep=0.8pt] (10) at (1,0) {$ .$}
	edge[->,blue] node[auto,black] {\tiny $e_3^1$} (00);
\node[inner sep=0.8pt] (01) at (0,1) {$. $}
	edge[->,dashed,red] node[auto,black,swap] {\tiny $f_3^1$} (00);
\node[inner sep=0.8pt] (11) at (1,1) {$. $}
	edge[->,blue] node[auto,black,swap] {\tiny $e_2^1$} (01)
	edge[->,dashed,red] node[auto,black] {\tiny $f_2^1$} (10);
\node[inner sep=0.8pt]  at (0.5,0.5) {$\scriptstyle \lambda_3^1$};
\end{scope}

\begin{scope}[xshift=7cm]
\node[inner sep=0.8pt] (00) at (0,0) {$. $};
\node[inner sep=0.8pt] (10) at (1,0) {$. $}
	edge[->,blue] node[auto,black] {\tiny $e_3^2$} (00);
\node[inner sep=0.8pt] (01) at (0,1) {$. $}
	edge[->,dashed] node[auto,black,swap] {\tiny $f_3^2$} (00);
\node[inner sep=0.8pt] (11) at (1,1) {$. $}
	edge[->,blue] node[auto,black,swap] {\tiny $e_3^1$} (01)
	edge[->,dashed,red] node[auto,black] {\tiny $f_3^1$} (10);
\node[inner sep=0.8pt]  at (0.5,0.5) {$\scriptstyle \lambda_4^1$};
\end{scope}

\begin{scope}[xshift=11cm]
\node[inner sep=0.8pt] (00) at (0,0) {$. $};
\node[inner sep=0.8pt] (10) at (1,0) {$. $}
	edge[->,blue] node[auto,black] {\tiny $e_3^2$} (00);
\node[inner sep=0.8pt] (01) at (0,1) {$. $}
	edge[->,dashed] node[auto,black,swap] {\tiny $f_3^2$} (00);
\node[inner sep=0.8pt] (11) at (1,1) {$. $}
	edge[->,blue] node[auto,black,swap] {\tiny $e_3^2$} (01)
	edge[->,dashed,red] node[auto,black] {\tiny $f_3^2$} (10);
\node[inner sep=0.8pt]  at (0.5,0.5) {$\scriptstyle \lambda_4^2$};
\end{scope}
\end{tikzpicture}
\]

\noindent
We may deduce the formulas for $p_I,q_I$ from the above squares. The graphs $F_{\Lambda_I}$ and $E_{\Lambda_I}$ are as follows:
\[
\begin{tikzpicture}[yscale=0.75, >=stealth]
\begin{scope}[yshift=2.3cm]
\node at (-2,-1.5) {$E_{\Lambda_I}:=$};
\node[inner sep=0.8pt] (v1) at (0,0) {$\scriptstyle v$}
edge[loop,->, black, in=230,out=130,looseness=10] node[auto,black,swap] {$\scriptstyle e_1^1$} (v1);

\node[inner sep=0.8pt] (w1) at (2,0) {$\scriptstyle w^1$};


\node[inner sep=0.8pt] (w2) at (2,-3) {$\scriptstyle w^2$}
edge[loop,->,black,in=-50,out=50,looseness=10] node[auto,black,right] {$\scriptstyle e_3^2$} (w2);;

\draw[<-] (w1.south west)--(v1.south east);
\node[inner sep=6pt, above] at (1,0.1) {$\scriptstyle e_2^1$};



\draw[->] (w1)--(w2) node[pos=0.6,right] {\tiny $e_3^1$};

\end{scope}

\begin{scope}[xshift=7cm,yshift=-1cm]
\node at (-2.2,2.2) {$F_{\Lambda_I}:=$};
\node[inner sep=0.8pt] (u2) at (0,1) {\tiny $f_2^1$};

\node[inner sep=0.8pt] (u1) at (0,3) {\tiny $f_1^1$} edge[loop,->,in=225,out=135,looseness=10] node[auto,black,swap] {$\scriptstyle \lambda_1^1$} (u1);
\draw[<-] (u2) -- (u1); \node[inner sep=6pt, above] at (-0.25,1.5) {$\scriptstyle \lambda_2^1$};

\node (v1) at (2,3) {\tiny $f_3^1$};

\node (v2) at (2,1) {\tiny $f_3^2$}
edge[loop,->,in=-45,out=45,looseness=7] node[auto,black,right] {$\scriptstyle \lambda_4^2$} (v2);
\draw[->] (u2) -- (v1); \node[inner sep=6pt, below] at (1,2) {$\scriptstyle \lambda_3^1$};

\draw[<-] (v2) -- (v1); \node[inner sep=6pt, below] at (1.75,2.5) {$\scriptstyle \lambda_4^1$};
\end{scope}
\end{tikzpicture}
\]

As indicated in the proof of Theorem \ref{thm:6.1}, we can also iteratively  perform the textile-system moves of  insplitting (Definition~\ref{dfn:textile-insplit}) and inversion (Definition~\ref{dfn:inverted-textile}) on $T$. 
The resulting textile system (which is non-LR) will give rise to the  identical infinite path space as the infinite path space from the system $T_{\Lambda_I}$ associated to $\Lambda_I$.

\noindent\emph{Step 1: Johnson--Madden insplitting at all $f_i$, $i= 1, 2, 3$, on $T$ using the partition $\mathcal{F}_{f_i}^l$, $l= 1, 2$.} 
For $f_i\in F^0$, $i= 1, 2, 3$, the partitions $\mathcal F^l_{f_i}$ of Equation \eqref{eq:first-JM-insplit} become 
\begin{equation}
\label{eq:F-A-partition}
    \mathcal{F}_{f_1}^1= \{\lambda_1\},  \quad 
    \mathcal{F}_{f_2}^1= \{\lambda_2\},  \quad  
    \mathcal{F}_{f_3}^1= \{\lambda_3\},  \quad \mathcal{F}_{f_3}^2= \{\lambda_4\}.
\end{equation}

\noindent The resulting textile system $T_A= (p_A, q_A : F_A\to  E_A)$ 
has $E_A = E$ and $F_A$ the directed-graph insplit of $F$ using the partition \eqref{eq:F-A-partition};  the graph homomorphisms $p_A,  q_A$ are inherited from $p, q$ respectively.  

The graph $F_A$ is 

\begin{tikzpicture} 
\begin{scope}[xshift=7cm,yshift=-1cm]
\node at (-2.2,2.2) {$F_A:=$};
\node[inner sep=0.8pt] (u2) at (0,1) {\tiny $f_2^1$};

\node[inner sep=0.8pt] (u1) at (0,3) {\tiny $f_1^1$} edge[loop,->,in=225,out=135,looseness=10] node[auto,black,swap] {$\scriptstyle \lambda_1^1$} (u1);
\draw[<-] (u2) -- (u1); \node[inner sep=6pt, above] at (-0.25,1.5) {$\scriptstyle \lambda_2^1$};

\node (v1) at (2,3) {\tiny $f_3^1$};

\node (v2) at (2,1) {\tiny $f_3^2$}
edge[loop,->,in=-45,out=45,looseness=7] node[auto,black,right] {$\scriptstyle \lambda_4^2$} (v2);
\draw[->] (u2) -- (v1); \node[inner sep=6pt, below] at (1,2) {$\scriptstyle \lambda_3^1$};

\draw[<-] (v2) -- (v1); \node[inner sep=6pt, below] at (1.75,2.5) {$\scriptstyle \lambda_4^1$};
\end{scope}
\end{tikzpicture}

and the commuting squares are: 
\[
\begin{tikzpicture}
\begin{scope}[xshift=2cm]
\node[inner sep=0.8pt] (00) at (0,0) {$ $};
\node[inner sep=0.8pt] (10) at (1,0) {$ $}
	edge[blue] node[auto,black] {\tiny $e_1$} (00);
\node[inner sep=0.8pt] (01) at (0,1) {$ $}
	edge[dashed,red] node[auto,black,swap] {\tiny $f^1_1$} (00);
\node[inner sep=0.8pt] (11) at (1,1) {$ $}
	edge[blue] node[auto,black,swap] {\tiny $e_1$} (01)
	edge[dashed,red] node[auto,black] {\tiny $f^1_1$} (10);
\node[inner sep=0.8pt]  at (0.5,0.5) {$\scriptstyle \lambda_1^1$};
\end{scope}

\begin{scope}[xshift=5cm]
\node[inner sep=0.8pt] (00) at (0,0) {$ $};
\node[inner sep=0.8pt] (10) at (1,0) {$ $}
	edge[blue] node[auto,black] {\tiny $e_2$} (00);
\node[inner sep=0.8pt] (01) at (0,1) {$ $}
	edge[dashed,red] node[auto,black,swap] {\tiny $f_2^1$} (00);
\node[inner sep=0.8pt] (11) at (1,1) {$ $}
	edge[blue] node[auto,black,swap] {\tiny $e_1$} (01)
	edge[dashed,red] node[auto,black] {\tiny $f_1^1$} (10);
\node[inner sep=0.8pt]  at (0.5,0.5) {$\scriptstyle \lambda_2^1$};
\end{scope}

\begin{scope}[xshift=8cm]
\node[inner sep=0.8pt] (00) at (0,0) {$ $};
\node[inner sep=0.8pt] (10) at (1,0) {$ $}
	edge[blue] node[auto,black] {\tiny $e_3$} (00);
\node[inner sep=0.8pt] (01) at (0,1) {$ $}
	edge[dashed,red] node[auto,black,swap] {\tiny $f_3^1$} (00);
\node[inner sep=0.8pt] (11) at (1,1) {$ $}
	edge[blue] node[auto,black,swap] {\tiny $e_2$} (01)
	edge[dashed,red] node[auto,black] {\tiny $f_2^1$} (10);
\node[inner sep=0.8pt]  at (0.5,0.5) {$\scriptstyle \lambda_3^1$};
\end{scope}

\begin{scope}[xshift=11cm]
\node[inner sep=0.8pt] (00) at (0,0) {$ $};
\node[inner sep=0.8pt] (10) at (1,0) {$ $}
	edge[blue] node[auto,black] {\tiny $e_3$} (00);
\node[inner sep=0.8pt] (01) at (0,1) {$ $}
	edge[red,dashed] node[auto,black,swap] {\tiny $f_3^2$} (00);
\node[inner sep=0.8pt] (11) at (1,1) {$ $}
	edge[blue] node[auto,black,swap] {\tiny $e_3$} (01)
	edge[dashed,red] node[auto,black] {\tiny $f_3^1$} (10);
\node[inner sep=0.8pt]  at (0.5,0.5) {$\scriptstyle \lambda_4^1$};
\end{scope}

\begin{scope}[xshift=14cm]
\node[inner sep=0.8pt] (00) at (0,0) {$ $};
\node[inner sep=0.8pt] (10) at (1,0) {$ $}
	edge[blue] node[auto,black] {\tiny $e_3$} (00);
\node[inner sep=0.8pt] (01) at (0,1) {$ $}
	edge[red,dashed] node[auto,black,swap] {\tiny $f_3^2$} (00);
\node[inner sep=0.8pt] (11) at (1,1) {$ $}
	edge[blue] node[auto,black,swap] {\tiny $e_3$} (01)
	edge[dashed,red] node[auto,black] {\tiny $f_3^2$} (10);
\node[inner sep=0.8pt]  at (0.5,0.5) {$\scriptstyle \lambda_4^2$};
\end{scope}
\end{tikzpicture}
\]

\noindent\emph{Step 2: Invert $T_A$ to get $T_B= (p_B, q_B: F_B \to E_B)$, where $F_B^0= E_A^1= E^1$, $F_B^1= F_A^1= \{\lambda_1^1, \lambda_2^1, \lambda_3^1, \lambda_4^i: i= 1, 2\}$, $E_B^0= E_A^0= E^0$, and $E_B^1= F_A^0= \{f_1^1, f_2^1, f_3^i: i= 1, 2\}$.} 
The corresponding graphs are

\begin{tikzpicture}
    

\node at (-2.2,1.2) {$F_B:=$};

\node[inner sep=0.8pt] (u) at (0,1) {\tiny $e_1$} edge[loop,->,in=225,out=135,looseness=10] node[auto,black,swap] {$\scriptstyle {\lambda}_1^1$} (u);

\node[inner sep=0.8pt] (v1) at (4,1) {\tiny $e_3$} edge[loop,->,in=-35,out=35,looseness=10] node[auto,black,right] {$\scriptstyle {\lambda}_4^1$} (v1) edge[loop,->,in=-47,out=47,looseness=30] node[auto,black,right] {$\scriptstyle {\lambda}_4^2$} (v1);
\node (v2) at (2,2) {\tiny $e_2$};
\draw[<-] (v1) -- (v2);
\node at (3,1.75) {\tiny ${\lambda}_3^1$};

\draw[->] (u) -- (v2);
\node at (1,1.75) {\tiny ${\lambda}_2^1$};

\begin{scope}[xshift=10cm,yshift=0cm]
\node at (-2.2,1.2) {$E_B:=$};
\node[inner sep=0.8pt] (u) at (0,1) {$\scriptstyle v$}
edge[black,loop,->,in=225,out=135,looseness=15] node[auto,black,left] {$\scriptstyle f^1_1$} (u);

\node[inner sep=0.8pt] (v) at (2,1) {$\scriptstyle w$}
edge[black,loop,->,in=-45,out=45,looseness=15] node[auto,black,right] {$\scriptstyle f^2_3$} (v)
edge[loop,->,in=-45,out=45,looseness=30,black] node[auto,black,right] {$\scriptstyle f^1_3$} (v);

\draw[black,<-] (v.south west)
	parabola[parabola height=-0.5cm] (u.south east);
\node[inner sep=6pt, below] at (1,0.5) {$\scriptstyle f^1_2$};
\end{scope}
\end{tikzpicture}

\noindent
The commuting squares are the transposes of the commuting squares from Step 1.

\noindent\emph{Step 3: Johnson--Madden insplit $T_B$ at all $e_i$, $i= 1, 2, 3$, using the partition $\mathcal{H}^\ell_{e_i}= 
\{ \lambda^\ell  : \lambda \in F^1 , \ e_i = q( \lambda ) , \ s ( \lambda )   \in  \mathcal G_{s(e_i)}^\ell  
 \} $ for $\ell \leq 2$:}
This definition yields  
\begin{align*}
    \mathcal{H}^1_{e_1}= \{{\lambda}_1^1\}, & \quad 
    \mathcal{H}^1_{e_2}= \{{\lambda}_2^1\}, \\
    \mathcal{H}^1_{e_3}= \{{\lambda}^1_3\}, & \quad \mathcal{H}^2_{e_3}= \{{\lambda}_4^1, {\lambda}_4^2\}.
\end{align*}
The textile system $T_C= (p_C, q_C : F_C \to E_C)$ consists of the graph $F_C$ with $F_C^0= \{e_1^1=e_1, e_2^1=e_2, e_3^1, e_3^2\}$, $F_C^1= \{{\lambda}_k^{j^i}: k= 1, 2, 3, 4 \ \ \text{and} \ \ i, j \leq 2\}$, and the graph $E_C= E_B$.
The graph $F_C$ is as follows:

\begin{tikzpicture} 
\begin{scope}[xshift=7cm,yshift=-1cm]
\node at (-2.2,0.2) {$F_C:=$};

\node[inner sep=0.8pt] (u1) at (0,1) {\tiny $e_1^1$} edge[loop,->,in=225,out=135,looseness=5] node[auto,black,swap] {$\scriptstyle \lambda_1^{1^1}$} (u1); 
\node[inner sep=0.8pt] (w2) at (6,1) {\tiny $e_3^2$} edge[loop,->,in=-50,out=50,looseness=20] node[auto,black,right] {$\scriptstyle \lambda_4^{2^2}$} (w2)edge[loop,->,in=-45,out=45,looseness=5] node[auto,black,right] {$\scriptstyle \lambda_4^{1^2}$} (w2);
\node (v1) at (2,1) {\tiny $e_2^1$};
\draw[->] (u1.east)--(v1.west)node[inner sep=6pt,above] at (1,1) {$\scriptstyle \lambda_2^{1^1}$};

\node[inner sep=0.8pt] (w1) at (4,1) {\tiny $e_3^1$};
\draw[->] (v1.east)--(w1.west)node[inner sep=6pt,above] at (3,1) {$\scriptstyle \lambda_3^{1^1}$};
\draw[<-] (w2) -- (w1); \node[inner sep=6pt, below] at (4.65,0.65) {$\scriptstyle \lambda_4^{2^1}$};
\draw[->] (w1.320) to [bend right] (w2.220) node[
inner sep=12pt, above] at (5,0.8) {$\scriptstyle \lambda_4^{1^1}$};

\end{scope}
\end{tikzpicture}

The commuting squares are:
\[
\begin{tikzpicture}
\begin{scope}[xshift=3cm]
\node[inner sep=0.8pt] (00) at (0,0) {$. $};
\node[inner sep=0.8pt] (10) at (1,0) {$. $}
	edge[->,blue] node[auto,black] {\tiny $f^1_1$} (00);
\node[inner sep=0.8pt] (01) at (0,1) {$. $}
	edge[->,dashed,red] node[auto,black,swap] {\tiny $e_1^1$} (00);
\node[inner sep=0.8pt] (11) at (1,1) {$. $}
	edge[->,blue] node[auto,black,swap] {\tiny $f^1_1$} (01)
	edge[->,dashed,red] node[auto,black] {\tiny $e_1^1$} (10);
\node[inner sep=0.8pt]  at (0.5,0.5) {$\scriptstyle {\lambda}_1^{1^1}$};
\end{scope}

\begin{scope}[xshift=7cm]
\node[inner sep=0.8pt] (00) at (0,0) {$. $};
\node[inner sep=0.8pt] (10) at (1,0) {$. $}
	edge[->,blue] node[auto,black] {\tiny $f^1_2$} (00);
\node[inner sep=0.8pt] (01) at (0,1) {$. $}
	edge[->,dashed,red] node[auto,black,swap] {\tiny $e_2^1$} (00);
\node[inner sep=0.8pt] (11) at (1,1) {$. $}
	edge[->,blue] node[auto,black,swap] {\tiny $f^1_1$} (01)
	edge[->,dashed,red] node[auto,black] {\tiny $e_1^1$} (10);
\node[inner sep=0.8pt]  at (0.5,0.5) {$\scriptstyle {\lambda}_2^{1^1}$};
\end{scope}

\begin{scope}[xshift=11cm]
\node[inner sep=0.8pt] (00) at (0,0) {$. $};
\node[inner sep=0.8pt] (10) at (1,0) {$. $}
	edge[->,blue] node[auto,black] {\tiny $f^1_3$} (00);
\node[inner sep=0.8pt] (01) at (0,1) {$. $}
	edge[->,dashed,red] node[auto,black,swap] {\tiny $e_3^1$} (00);
\node[inner sep=0.8pt] (11) at (1,1) {$. $}
	edge[->,blue] node[auto,black,swap] {\tiny $f^1_2$} (01)
	edge[->,dashed,red] node[auto,black] {\tiny $e_2^1$} (10);
\node[inner sep=0.8pt]  at (0.5,0.5) {$\scriptstyle {\lambda}_3^{1^1}$};
\end{scope}

\end{tikzpicture}
\]

\[
\begin{tikzpicture}
\begin{scope}[xshift=2cm]
\node[inner sep=0.8pt] (00) at (0,0) {$. $};
\node[inner sep=0.8pt] (10) at (1,0) {$. $}
	edge[->,blue] node[auto,black] {\tiny $f_3^2$} (00);
\node[inner sep=0.8pt] (01) at (0,1) {$. $}
	edge[->,red,dashed] node[auto,black,swap] {\tiny $e_3^2$} (00);
\node[inner sep=0.8pt] (11) at (1,1) {$. $}
	edge[->,blue] node[auto,black,swap] {\tiny $f_3^1$} (01)
	edge[->,dashed,red] node[auto,black] {\tiny $e_3^1$} (10);
\node[inner sep=0.8pt]  at (0.5,0.5) {$\scriptstyle {\lambda}_4^{1^1}$};
\end{scope}

\begin{scope}[xshift=4.5cm]
\node[inner sep=0.8pt] (00) at (0,0) {$. $};
\node[inner sep=0.8pt] (10) at (1,0) {$. $}
	edge[->,blue] node[auto,black] {\tiny $f_3^2$} (00);
\node[inner sep=0.8pt] (01) at (0,1) {$. $}
	edge[->,red,dashed] node[auto,black,swap] {\tiny $e_3^2$} (00);
\node[inner sep=0.8pt] (11) at (1,1) {$. $}
	edge[->,blue] node[auto,black,swap] {\tiny $f_3^1$} (01)
	edge[->,dashed,red] node[auto,black] {\tiny $e_3^2$} (10);
\node[inner sep=0.8pt]  at (0.5,0.5) {$\scriptstyle {\lambda}_4^{1^2}$};
\end{scope}

\begin{scope}[xshift=7cm]
\node[inner sep=0.8pt] (00) at (0,0) {$. $};
\node[inner sep=0.8pt] (10) at (1,0) {$. $}
	edge[->,blue] node[auto,black] {\tiny $f_3^2$} (00);
\node[inner sep=0.8pt] (01) at (0,1) {$. $}
	edge[->,red,dashed] node[auto,black,swap] {\tiny $e_3^2$} (00);
\node[inner sep=0.8pt] (11) at (1,1) {$. $}
	edge[->,blue] node[auto,black,swap] {\tiny $f_3^2$} (01)
	edge[->,dashed,red] node[auto,black] {\tiny $e_3^1$} (10);
\node[inner sep=0.8pt]  at (0.5,0.5) {$\scriptstyle {\lambda}_4^{2^1}$};
\end{scope}

\begin{scope}[xshift=9.5cm]
\node[inner sep=0.8pt] (00) at (0,0) {$. $};
\node[inner sep=0.8pt] (10) at (1,0) {$. $}
	edge[->,blue] node[auto,black] {\tiny $f_3^2$} (00);
\node[inner sep=0.8pt] (01) at (0,1) {$. $}
	edge[->,red,dashed] node[auto,black,swap] {\tiny $e_3^2$} (00);
\node[inner sep=0.8pt] (11) at (1,1) {$. $}
	edge[->,blue] node[auto,black,swap] {\tiny $f_3^2$} (01)
	edge[->,dashed,red] node[auto,black] {\tiny $e_3^2$} (10);
\node[inner sep=0.8pt]  at (0.5,0.5) {$\scriptstyle {\lambda}_4^{2^2}$};
\end{scope}
\end{tikzpicture}
\]

\noindent\emph{Step 4: Invert $T_C$ to get the final textile system $T_D:= (p_D, q_D: F_D \to E_D)$, where $F_D^0= E_C^1= \{f^i_1, f^i_2, f_3^i: i \leq  2\}$, $F_D^1= F_C^1= \{\lambda_k^{j^i}: k= 1, 2, 3, 4 \text{ and } i, j \leq 2\}$, $E_D^0= E_C^0= \{v, w\}$, and $E_D^1= F_C^0= \{e_1^i, e_2^i, e_3^i: i \leq 2\}$.}  That is, we have 

\begin{tikzpicture} 
\begin{scope}[xshift=7cm,yshift=-1cm]
\node at (-2.2,0.2) {$F_D:=$};

\node[inner sep=0.8pt] (u1) at (0,1) {\tiny $f_1^1$} edge[loop,->,in=225,out=135,looseness=5] node[auto,black,swap] {$\scriptstyle \lambda_1^{1^1}$} (u1); 
\node[inner sep=0.8pt] (w2) at (6,1) {\tiny $f_3^2$} edge[loop,->,in=-50,out=50,looseness=20] node[auto,black,right] {$\scriptstyle \lambda_4^{2^2}$} (w2)edge[loop,->,in=-45,out=45,looseness=5] node[auto,black,right] {$\scriptstyle \lambda_4^{1^2}$} (w2);
\node (v1) at (2,1) {\tiny $f_2^1$};
\draw[->] (u1.east)--(v1.west)node[inner sep=6pt,above] at (1,1) {$\scriptstyle \lambda_2^{1^1}$};

\node[inner sep=0.8pt] (w1) at (4,1) {\tiny $f_3^1$};
\draw[->] (v1.east)--(w1.west)node[inner sep=6pt,above] at (3,1) {$\scriptstyle \lambda_3^{1^1}$};
\draw[<-] (w2) -- (w1); \node[inner sep=6pt, below] at (4.5,0.65) {$\scriptstyle \lambda_4^{2^1}$};
\draw[->] (w1.320) to [bend right] (w2.220) node[inner sep=8pt,above] at (5,1)
{$\scriptstyle \lambda_4^{1^1}$};

\end{scope}
\end{tikzpicture}

\noindent 
and $E_D \cong E_C = E_B$ by relabeling each edge $f_i^j \in E_B^1$ as $e_i^j$.
We obtain the commuting squares for $T_D$ by  flipping
the 7 commuting squares obtained in Step 3 along the diagonal. 

To recover the infinite path space associated to $T_{\Lambda_I}$ from the above four Johnson–Madden moves, we will delete the edges  $\lambda_{k}^{j^i} \in  F_D$ where  $i\neq j$. 
Note that this procedure converts $F_D$ into 
$F_{\Lambda_I}$.

As further evidence that the conjugacy ${\sf X}_{T_I}^+ \cong {\sf X_D}^+$ guaranteed by  Theorem \ref{thm:6.1} does not arise from an isomorphism of textile systems, however, note that we do not have $E_D \cong E_{\Lambda_I}$, as $E_{\Lambda_I}$ has three vertices and $E_D$ has two. 
\end{example}

\section{When textile-system insplitting yields a 2-graph insplitting}\label{sec:JM-ghinsp}

In the previous section, we showed in Theorem \ref{thm:6.1} how to reconstruct a 2-graph insplitting via  four Johnson--Madden moves.  In this section, we identify several other perspectives on the relationship between 2-graph insplitting and Johnson--Madden insplitting.  We first show (Theorem \ref{thm:Priyanga}) that, alternatively, the 2-graph insplitting may be reconstructed via a single Johnson--Madden insplit, together with an insplitting of the base graph $E$ of the textile system.  Then, Theorem \ref{thm:lr-insplit=2g-insplit} shows that certain Johnson--Madden insplits yield 2-graph insplits, and Theorem \ref{Thm: Main result III} establishes 
that certain insplits of the base graph $E$ of a textile system yield both Johnson--Madden insplits and 2-graph insplits.  Finally, Section \ref{subsec: unification} reveals the compatibility of these perspectives.

These three theorems 
can be summarized by the diagrams below, where the solid arrows indicate relationships that are assumed (or known in general), and the dotted arrows indicate relationships which are constructed in the theorem.

\[ \begin{tikzpicture}

\node at (0, 4) {$T$};
\node at (1.2,4) {\tiny{Theorem \ref{thm:Priyanga}}};
\node (MR12) at (0,3.2) {$F$};
\node (MR14) at (0,2) {$E$};
\node (MR12) at (0,0) {$\Lambda$};
\draw[->] (0,3) -- (0,2.2);
\draw[<->] (0,1.8) -- (0,0.2);
\draw  (0,4) circle (10pt);

\node at (2.5, 4) {$\widetilde T$};
\node (MR22) at (2.5,3.2) {$\widetilde{F}$};
\node (MR24) at (2.5,2) {$\widetilde{E}$};
\node (MR32) at (2.5,0) {$\Lambda_I$};
\draw[->] (2.5,2.9) -- (2.5,2.3);
\draw[<->] (2.5,1.7) -- (2.5,0.25);

\draw[->,dotted] (0.2,3.2) -- (2.3,3.2);
\draw[->,dotted] (0.2,2) -- (2.3,2);
\draw[->] (0.2,0) -- (2.3,0);
\draw (2.5, 4) circle (10pt);

\begin{scope}[xshift=6cm]
    
\node at (0, 4) {$T$};
\node at (1.2,4) {\tiny{Theorem \ref{thm:lr-insplit=2g-insplit}}};
\node (MR12) at (0,3.2) {$F$};
\node (MR14) at (0,2) {$E$};
\node (MR12) at (0,0) {$\Lambda$};
\draw[->] (0,3) -- (0,2.2);
\draw[<->] (0,1.8) -- (0,0.2);
\draw  (0,4) circle (10pt);

\node at (2.5, 4) {$\widetilde T$};
\node (MR22) at (2.5,3.2) {$\widetilde{F}$};
\node (MR24) at (2.5,2) {$\widetilde{E}$};
\node (MR32) at (2.5,0) {$\Lambda_I$};
\draw[->] (2.5,2.9) -- (2.5,2.3);
\draw[<->] (2.5,1.7) -- (2.5,0.25);

\draw[->] (0.2,3.2) -- (2.3,3.2);
\draw[->,dotted] (0.2,2) -- (2.3,2);
\draw[->,dotted] (0.2,0) -- (2.3,0);
\node at (1.2,2.9) {\Tiny{Hypotheses (1)\&(2)}};
\draw (2.5, 4) circle (10pt);

\end{scope}
\begin{scope}[xshift=12cm]
    
\node at (0, 4) {$T$};
\node at (1.25,4) {\tiny{Theorem \ref{Thm: Main result III}}};
\node (MR12) at (0,3.2) {$F$};
\node (MR14) at (0,2) {$E$};
\node (MR12) at (0,0) {$\Lambda$};
\draw[->] (0,3) -- (0,2.2);
\draw[<->] (0,1.8) -- (0,0.2);
\draw  (0,4) circle (10pt);

\node at (2.5, 4) {$\widetilde T$};
\node (MR22) at (2.5,3.2) {$\widetilde{F}$};
\node (MR24) at (2.5,2) {$\widetilde{E}$};
\node (MR32) at (2.5,0) {$\Lambda_I$};
\draw[->] (2.5,2.9) -- (2.5,2.3);
\draw[<->] (2.5,1.7) -- (2.5,0.25);

\draw[->,dotted] (0.2,3.2) -- (2.3,3.2);
\draw[->] (0.2,2) -- (2.3,2);
\draw[->,dotted] (0.2,0) -- (2.3,0);
\node at (1.2,1.7) {\Tiny{Hyp.~of Thm.~\ref{Thm: Main result III}}};
\draw (2.5, 4) circle (10pt);

\end{scope}

\end{tikzpicture}\]

The results in Section \ref{subsec: unification} confirm that the diagrams above can be superimposed on each other; or, equivalently, that any one of the horizontal arrows in the diagram determines the others.  For example, the dotted arrow $F \to \widetilde F$ in the first diagram (which is constructed in Theorem \ref{thm:Priyanga} ) satisfies the hypothesis in Theorem \ref{thm:lr-insplit=2g-insplit}.

\subsection{Textile insplitting from 2-graph insplitting}
While Theorem \ref{thm:6.1} shows that a 2-graph insplit can be recovered by four Johnson--Madden moves, we establish in the following Theorem that we could alternatively recover the 2-graph insplitting by combining a single Johnson--Madden insplitting with an insplit of the base graph $E$ of the textile system.

\begin{theorem}\label{thm:Priyanga}
Let $T = (p,q:F\to E)$ be an LR textile system. 
Let $\Lambda$ denote the 2-graph associated to $T$ and assume that $\Lambda$ can be 
(2-graph) insplit using the partition $\{ \mathcal{G}^j_z : z \in E^0, 1\leq j \leq m(z)\}$ 
of $\Lambda^1$.
For each  $v \in F^0$, define
\begin{equation}
    \mathcal{F}^i_v := \{ \lambda \in F^1: r(\lambda) = v,\;\; p(\lambda) \in \mathcal{G}^i_{r(p(\lambda)) = p(v)} \},
    \label{eq:F-partition-sets}
\end{equation}
and for $ z \in E^0$, define
\begin{equation} 
\mathcal{E}^i_z := \mathcal{G}^i_z \cap z E^1.
\label{eq:E-partition-sets}
\end{equation}
Denote by $\wt{F}$ the directed graph resulting from directed-graph insplitting on $F$ at every vertex $v \in F^0$, using the partition 
$\{ \mathcal F_v^i: 1 \leq i \leq m(p(v)), v \in F^0\}.$
Let $\wt{E}$ denote the directed graph resulting from directed-graph insplitting on $E$ at each vertex $z \in E^0$, using the partition 
$\{ \mathcal E^i_z: 1 \leq i \leq m(z), z \in E^0\}.$
Define maps $\wt{p}, \wt{q}: \wt{F} \to \wt{E}$ by 
\begin{align*}
\wt{p}(\lambda^i) & :=  p(\lambda)^i, \ \wt{q}(\lambda^i) :=  q(\lambda)^m, \mbox{ where } s(\lambda) \in \mathcal{G}^m_{q(s(\lambda))}, \\
\wt{p}(v^i) & :=  p(v)^i, \ \wt{q}(v^i)  :=  q(v)^m, \mbox{ where } v \in \mathcal{G}^m_{q(v)},  
\end{align*}
where $\lambda \in F^1$, $v \in F^0$. 
Then 
\begin{enumerate}
\item The maps $\wt{p}, \wt{q}:  \wt{F}  \to  \wt{E}$ are 
graph homomorphisms.
\item $\wt{T} := (\wt{p}, \wt{q} : \wt{F} \to \wt{E} )$ is an LR textile system.
\item The 2-graph associated to $\wt{T}$ (denoted by $\wt{\Lambda}$) is identical to the 2-graph ${\Lambda}_I$ resulting from insplitting  $\Lambda$ using the partition $\{ \mathcal G^j_z: z \in E^0, 1 \leq j \leq m(z)\}$. 
\end{enumerate}
\end{theorem}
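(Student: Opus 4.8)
The plan is to dispatch the three claims in sequence, but the genuine starting point is a preliminary step that the statement glosses over: confirming that $\{\mathcal F^i_v\}$ and $\{\mathcal E^i_z\}$ are legitimate insplitting partitions in the sense of Definition~\ref{def:graph insplitting}, i.e.\ that their blocks are \emph{nonempty}. The engine here is the observation that every block $\mathcal G^j_z$ meets both $zE^1$ and $q^{-1}(z)\subseteq F^0$. Indeed, if $\mathcal G^j_z$ contains a vertex $w\in F^0$, then essentiality of $F$ produces an edge $\lambda$ with $r(\lambda)=w$, and the pairing condition forces $q(\lambda)\in\mathcal G^j_z\cap zE^1$; if $\mathcal G^j_z$ already contains an $E$-edge there is nothing to prove. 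This gives $\mathcal E^i_z\neq\emptyset$, and then unique $r$-path lifting of $p$ promotes an edge of $\mathcal E^i_{p(v)}$ to an element of $\mathcal F^i_v$, so those blocks are nonempty as well. With this in hand, $\wt F$, $\wt E$ are well-defined insplit graphs.

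For claim (1), I would check the four range/source identities for $\wt p,\wt q$ directly against the insplit source/range rules and the homomorphism property of $p,q$ (which keeps all the superscripts in range, since e.g.\ $m(s_E(p(\lambda)))=m(p(s(\lambda)))$). The source identities and the range identity for $\wt p$ are formal. The one substantive identity is $r_{\wt E}(\wt q(\lambda^i))=\wt q(r_{\wt F}(\lambda^i))$: the left-hand side carries the block index of $q(\lambda)$ in the vertex $q(r(\lambda))$, while the right-hand side carries the block index of $r(\lambda)$ there, and these agree \emph{precisely because} the pairing condition places $q(\lambda)$ and $r(\lambda)$ in the same block. This is the crux of (1). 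For claim (2), I would invoke Lemma~\ref{lem:upl gives texsys} and verify that $\wt p$ has unique $r$-path lifting and $\wt q$ has unique $s$-path lifting; each reduces to the corresponding property of the LR system $T$, with the top/right superscript forced to equal the given edge label and the range/source superscript forced by block membership, so the lift in $\wt F$ is unique.

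Claim (3) is the heart of the theorem, and the strategy is to show that $\Lambda_{\wt T}$ and $\Lambda_I$ have identical $1$-skeletons and identical families of commuting squares; since a $2$-graph is determined by these data (cf.\ the discussion following Remark~\ref{rmk:2-skel-2colograph}), they must coincide. Reading range and source in $\Lambda_{\wt T}$ off Remark~\ref{rmk:Lambda-T-range-source}, I would match vertices, color-$1$ edges $\wt E^1\leftrightarrow\Lambda_I^{\varepsilon_1}$, and color-$2$ edges $\wt F^0\leftrightarrow\Lambda_I^{\varepsilon_2}$ as labelled graphs; the one nontrivial matching is the range index of a color-$2$ edge $w^i$, which agrees because $\wt q(w^i)=q(w)^m$ with $w\in\mathcal G^m_{q(w)}$ is read off the same block as the range rule in $\Lambda_I$. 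Finally I would compare commuting squares: the square of $\lambda^j$ in $\Lambda_{\wt T}$ obtained from Definition~\ref{dfn:2colgraph from T} is $r(\lambda)^\ell\,p(\lambda)^j\sim q(\lambda)^m\,s(\lambda)^j$, which is exactly the $j$-th child of $\lambda$ described in Remark~\ref{rmk:squares-in-insplit}, with the same block indices $\ell$ (from $p(\lambda)\in\mathcal G^\ell_{p(r(\lambda))}$) and $m$ (from $s(\lambda)\in\mathcal G^m_{q(s(\lambda))}$) and the same count $m(p(s(\lambda)))$.

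I expect the main obstacle to be organizational rather than conceptual: keeping four superscript systems consistent — the $F$-insplit index $i$, the two $E$-insplit block indices $\ell$ and $m$, and the edge label $j$ — across the constructions of $\wt F$, $\wt E$, $\wt p$, $\wt q$, and then against the \emph{independent} construction of $\Lambda_I$. The real mathematical content is concentrated in the two or three places (the range identity for $\wt q$, the nonemptiness of $\mathcal E^i_z$, the matching of the left-edge index in the squares) where the pairing condition is what forces two superscripts to coincide; once the partitions are known to be valid and the pairing condition is deployed at those points, everything else is a careful but routine unwinding of the definitions.
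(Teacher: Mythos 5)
Your proposal is correct and follows essentially the same route as the paper's proof: you deploy the pairing condition exactly where the paper does (the range identity $r_{\wt E}(\wt q(\lambda^i))=\wt q(r_{\wt F}(\lambda^i))$ and the matching of the left-edge index of each square), you reduce the resolving properties of $\wt p,\wt q$ to the LR property of $T$ in the same way, and you prove claim (3) by matching the $1$-skeletons and commuting squares of $\wt\Lambda$ and $\Lambda_I$, just as the paper does. The only cosmetic differences are that you invoke Lemma~\ref{lem:upl gives texsys} where the paper instead checks injectivity of the map $\lambda^i\mapsto(r(\lambda^i),\wt p(\lambda^i),s(\lambda^i),\wt q(\lambda^i))$ directly, and that you front-load the nonemptiness of the partition blocks (via the pairing condition plus source-freeness/essentiality), which the paper only addresses in passing inside part (3).
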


\begin{proof}    
To see (1), 
we will check that $r(\wt p(\lambda^i)) = \wt p(r(\lambda^i))$ and that $r (\wt q(\lambda^i)) = \wt q(r(\lambda^i))$ for all $\lambda^i \in \wt F^1$. (The checks that $s\circ \wt p = \wt p \circ s$ and $s \circ \wt q = \wt q \circ s$ are straightforward computations which we leave to the reader.)
 For the first computation,  if $\lambda \in \mathcal F^j_{r(\lambda)}$ then $r(\lambda^i) = r(\lambda)^j$ and $p(\lambda) \in \mathcal G^j_{p(r(\lambda))}$. 
 If $r(p(\lambda)) (=p(r(\lambda))) =  z \in E^0$, then when we insplit $E$ we have $r(p(\lambda)^k) = z^j$ for all $k$.  Therefore, 
\[ r(\wt p(\lambda^i)) = r(p(\lambda)^i) = z^j \qquad \text{ and } \qquad \wt p(r(\lambda^i)) = \wt p(r(\lambda)^j) = z^j,\]
as desired. 

To see that $r(\wt q(\lambda^i)) = \wt q(r(\lambda^i))$, observe that since $\mathcal G$ is a 2-graph insplitting partition, we will have $r(\lambda)$ and $q(\lambda)$ in the same partition set $\mathcal G^n_{r(q(\lambda))}$.  Therefore, regardless of which $m$ satisfies $s(\lambda) \in \mathcal G^m_{q(s(\lambda))}$, we have 
\[ r(\wt q(\lambda^i)) = r(q(\lambda)^m) = r(q(\lambda))^n,\]
and  regardless of which $\ell$ satisfies $\lambda \in \mathcal F^\ell_{r(\lambda)}$, we have 
$ \wt q(r(\lambda^i)) = \wt q(r(\lambda)^\ell) = q(r(\lambda))^n = r(q(\lambda))^n$,
since $q$ is a graph homomorphism.  We conclude that $\wt p, \wt q$ are graph homomorphisms, as claimed.

For (2), to see that $\wt{T}$ is an LR textile system, we  first observe that the function $\wt{F}^1 \ni \lambda^i \mapsto (r(\lambda^i), \wt{p}(\lambda^i), s(\lambda^i), \wt{q}(\lambda^i)) \in \wt{F}^0 \times \wt{E}^1 \times \wt{F}^0 \times \wt{E}^1$ is injective, because if $r(\lambda^i) = r(\mu^j)$ then $r(\lambda) = r(\mu) =: v$ and $\lambda, \mu \in \mathcal F^n_v$ lie in the same partition set.  
Furthermore, if $\wt p(\lambda^i) = \wt p(\mu^j)$ then $p(\lambda)^i = p(\mu)^j$ so we must have $p(\mu) = p(\lambda)$ and $i=j$.  
Since $T$ is LR, $r(\lambda) = r(\mu)$ and $p(\lambda) = p(\mu)$, we conclude that $\lambda = \mu$. As $i=j$ we also have $\lambda^i = \mu^j$, so the function is injective as claimed and $\wt T$ is a textile system.

To see that  $\wt{T}$ is LR, observe that if $\wt{p}(v^i)= r(e^j)\in \wt{E}^0$, then the definitions of the insplitting $\wt E $ and the homomorphism $\wt p$ imply that  we have $p(v)^i= r(e)^n$ where $e \in \mathcal E^n_{r(e)}$.  In particular, we have $p(v) = r(e)$ and $i=n$.  Since the textile system $T$ is LR, there exists a unique $\lambda\in F^1$ with $p(\lambda)= e$ and $r(\lambda)= v$.  As $p(\lambda) = e \in \mathcal G^i_{p(v)}$,  we have $\lambda \in \mathcal F^i_v$, and so $r(\lambda^j)  = v^i$.  The uniqueness of $\lambda$ with $p(\lambda) = e$ and $r(\lambda) = v$ implies that $\lambda^j \in \wt F^1$ is the unique edge with $\wt p(\lambda^j) = e^j$ and $r(\lambda^j) = v^i$.  That is, $\wt{p}$ is left resolving. 
To check that $\wt q$ is right resolving, suppose that $\wt{q}(v^i)= s(e^m)\in \wt{E}^0$ for some $e^m\in \wt{E}^1$. As $s(e^m) = s(e)^m$ in $\wt E^0$, the definition of $\wt q$ implies that $s(e) = q(v)$, and moreover that $v \in \mathcal G^m_{q(v)}$. 
Since $T$ is LR, i.e.~$q$ is right resolving, there exists a unique $\lambda \in F^1$ such that $q(\lambda)= e$ and $s(\lambda)= v$. 
For any $1 \leq j \leq m(p(r(\lambda)))$, we will then have   $\wt{q}(\lambda^j)= q(\lambda)^m= e^m$ and $s(\lambda^j)= s(\lambda)^j= v^j$. That is, there is a unique edge $f$ in $\wt F^1$ with source $v^i$ and $q(f) =e^m$, namely, $f = \lambda^i$.  We conclude that $\wt{q}$ is right resolving, and so 
$\wt{T}$ is LR.

For (3), we claim that the 2-graph $\Lambda_I$ resulting from 2-graph insplitting $\Lambda$ 
according to the partition $\{ \mathcal G^i_z: z \in E^0, 1\leq i \leq m(z)\}$
is identical to the graph $\wt{\Lambda}$ associated to the textile system $\wt{T}$. 
To see this, we first recall that since each set $\mathcal G^j_z$ contains at least one edge of each color, each set $\mathcal E^i_z$ is nonempty.  Thus, 
\[ \wt \Lambda^0 =\wt E^0 = \{ z^i: z \in E^0, 1\leq i \leq m(z)\}.\]
Similarly, $ \Lambda_I^0 = \{ z^i: z \in E^0, 1 \leq i \leq m(z)\},$
so $\Lambda_I^0 = \wt \Lambda^0.$

Next, note that $\wt \Lambda^{\varepsilon_1} = \wt E^1 = \{ e^j: e \in E^1, 1 \leq j \leq m(s(e))\},$ while 
\[ \Lambda_I^{\varepsilon_1} = \{e^i:  e \in \Lambda^{\varepsilon_1}, 1 \leq i \leq m(s(e))\} =  \{ e^i: e \in E^1, 1 \leq i \leq m(s(e))\} = \wt \Lambda^{\varepsilon_1}.\]
 As $s_\Lambda(v) = p(v)$, we also have $\Lambda_I^{\varepsilon_2} = \{ v^i: v \in F^0, 1 \leq i \leq m(s_\Lambda(v))\} $, while
\[ \wt \Lambda^{\varepsilon_2} = \{ v^j: v \in F^0,  1 \leq j \leq m(p(v))\} = \Lambda^{\varepsilon_2}_I.\]
Moreover, the range and source maps are the same in $\Lambda_I$ as in $\wt \Lambda$.  To see this, we first observe that for each $e^i\in \wt{E}^1 = \wt \Lambda^{\varepsilon_1} = \Lambda_I^{\varepsilon_1},$ we have $r_{\wt{\Lambda}}(e^i)= r_{\wt E}(e)= r(e)^j$ if $e \in \mathcal E^j_{r(e)} \subseteq \mathcal G^j_{r(e)}$, while $r_{\Lambda_I}(e^i) = r(e)^j$.  Similarly,  $s_{\wt{\Lambda}}(e^i)= s_{\wt{E}}(e^i)= s_E(e)^i$, and $s_{\Lambda_I}(e^i) = s_\Lambda(e)^i = s_E(e)^i$.  For $v^i\in \wt{F}^0$, we also have $s_{\wt{\Lambda}}(v^i)= \wt{p}(v^i)= p(v)^i$, whereas $ s_{\Lambda_I}(v^i) = s_\Lambda(v)^i = p(v)^i$. Finally,  if $v \in \mathcal G^m_{q(v)},$ $r_{\wt{\Lambda}}(v^i)= \wt{q}(v^i)= q(v)^m$ and $ r_{\Lambda_I}(v^i) = r_\Lambda(v)^m = q(v)^m$ since  $v\in \mathcal{G}^m_{q(v)}$.

It remains to check that the factorization rules are the same in $\Lambda_I$ as in $\wt \Lambda$. To that end, suppose that 
$w^k f^\ell$ is a composable 2-color path in $\Lambda_I$ with $w^k \in \Lambda^{\varepsilon_2}_I, f^\ell \in \Lambda^{\varepsilon_1}_I$. As $\Lambda_I$ is a 2-graph by Theorem \ref{thm:inplitcomplete}, there is a unique composable path $e^i v^j$ with $e^i v^j \sim_{\Lambda_I} w^k f^\ell.$  

Since $s_{\Lambda_I}(w^k) = s_\Lambda(w)^k = p(w)^k$, we must have $f \in \G^k_{r(f) = p(w)}$.  Since $r(f) = p(w)$ and $p$ is left resolving, there is a unique $\lambda \in F^1$ with $p(\lambda) = f, r(\lambda) = w$.  
If $e^i v^j \sim_{\Lambda_I} w^k f^\ell$, then in particular $ev \sim_\Lambda w f$. That is, the unique $\lambda \in F^1$ which satisfies $p(\lambda) = f, r(\lambda) =w$ also has $q(\lambda) = e, s(\lambda) = v$ (and vice versa).  Moreover, 
as $s_{\Lambda_I}(v^j) = p(v)^j$ and $s_{\Lambda_I}(f^\ell) = s(f)^\ell$, we must have $j = \ell.$

Now, consider $\sim_{\wt{\La}}$.  We will show that we also have $w^k f^\ell \sim_{\wt \Lambda} e^i v^\ell$. Since the source and range maps are the same in $\wt \Lambda$ and in $\Lambda_I$, we know that  $w^k f^\ell$ is composable in $\wt \Lambda$, that is, $r_{\wt \Lambda} (f^\ell)= s_{\wt \Lambda}(w^k) =  \wt p(w^k)$.  In other words,   $f \in \mathcal G^k_{p(w)}$. As we established when we checked (as part of Statement (2)) that $\widetilde p$ was left resolving, there is a unique $\lambda^\ell \in \wt F^1$ with $\wt p(\lambda^\ell) = f^\ell = p(\lambda)^\ell$ and $r(\lambda^\ell)= w^k.$ 
In particular, $\lambda \in F^1$ satisfies $p(\lambda) = f$ and $r(\lambda) = w$, so $q(\lambda) = e$ and $s(\lambda) = v$ for the $e, v$ discussed in the previous paragraph.  That is, we have $v \in \G^i_{s(e)}.$
Thus, $w^k f^\ell \sim_{\wt \Lambda} e^i v^\ell$, since $v \in \mathcal G^i_{s(e)}$ implies that 
$ r_{\wt \Lambda}(v^\ell) = \wt q(v^\ell) = q(v)^i = s(e)^i = s(e^i).$

As both $\wt \Lambda$ and $\Lambda_I$ are 2-graphs (thanks to Theorems \ref{thm:PST} and \ref{thm:inplitcomplete} respectively),  every blue-red path is equivalent to a unique red-blue path. The fact that any 2-color path $w^k f^\ell$ is equivalent to the same path $e^i v^\ell$ under both equivalence relations implies that $\wt \Lambda = \Lambda_I$ as claimed.

\end{proof}

\begin{example}
Consider again the 2-graph $\Lambda$ associated to the textile system from Example \ref{eq:the example}:   
\begin{equation*} 
\begin{array}{l}
\begin{tikzpicture}[yscale=0.75, >=stealth]

\node[inner sep=0.8pt] (u) at (0,0) {$\scriptstyle v$}
edge[blue,loop,->,in=225,out=135,looseness=15,thick] node[auto,black,left] {$\scriptstyle e_1$} (u)
edge[blue,loop,->,in=225,out=135,looseness=30,dashed,red,thick] node[auto,black,left] {$\scriptstyle f_1$} (u);

\node[inner sep=0.8pt] (v) at (2,0) {$\scriptstyle w$}
edge[blue,loop,->,in=-45,out=45,looseness=15,thick] node[auto,black,right] {$\scriptstyle e_3$} (v)
edge[loop,->,in=-45,out=45,looseness=30,dashed,red,thick] node[auto,black,right] {$\scriptstyle f_3$} (v);

\draw[->,dashed,red,thick] (u.north east)
    parabola[parabola height=0.5cm] (v.north west);
\node[inner sep=6pt, above] at (1,0.5) {$\scriptstyle f_2$};

\draw[blue,<-,thick] (v.south west)
	parabola[parabola height=-0.5cm] (u.south east);
\node[inner sep=6pt, below] at (1,-0.5) {$\scriptstyle e_2$};

\end{tikzpicture}
\end{array}
\end{equation*}

\noindent
\color{black}
The commuting squares are 

\[
\begin{tikzpicture}

\begin{scope}[xshift=4cm]

\node[inner sep=0.8pt] (00) at (0,0) {$. $};
\node[inner sep=0.8pt] (10) at (1,0) {$ .$}
	edge[->,blue] node[auto,black] {\tiny $e_1$} (00);
\node[inner sep=0.8pt] (01) at (0,1) {$. $}
	edge[->,dashed,red] node[auto,black,swap] {\tiny $f_1$} (00);
\node[inner sep=0.8pt] (11) at (1,1) {$. $}
	edge[->,blue] node[auto,black,swap] {\tiny $e_1$} (01)
	edge[->,dashed,red] node[auto,black] {\tiny $f_1$} (10);
\node[inner sep=0.8pt]  at (0.5,0.5) {$\scriptstyle \lambda_1$};
\node[inner sep=0.8pt]  at (-0.07,-0.07) {$\scriptstyle v$};
\node[inner sep=0.8pt]  at (1.07,1.07) {$\scriptstyle v$};
\end{scope}

\begin{scope}[xshift=7cm]
\node[inner sep=0.8pt] (00) at (0,0) {$. $};
\node[inner sep=0.8pt] (10) at (1,0) {$. $}
	edge[->,blue] node[auto,black] {\tiny $e_2$} (00);
\node[inner sep=0.8pt] (01) at (0,1) {$. $}
	edge[->,dashed,red] node[auto,black,swap] {\tiny $f_2$} (00);
\node[inner sep=0.8pt] (11) at (1,1) {$ $}
	edge[->,blue] node[auto,black,swap] {\tiny $e_1$} (01)
	edge[->,dashed,red] node[auto,black] {\tiny $f_1$} (10);
\node[inner sep=0.8pt]  at (0.5,0.5) {$\scriptstyle \lambda_2$};
\node[inner sep=0.8pt]  at (-0.07,-0.07) {$\scriptstyle w$};
\node[inner sep=0.8pt]  at (1.07,1.07) {$\scriptstyle v$};

\end{scope}
\begin{scope}[xshift=10cm]
\node[inner sep=0.8pt] (00) at (0,0) {$. $};
\node[inner sep=0.8pt] (10) at (1,0) {$. $}
	edge[->,blue] node[auto,black] {\tiny $e_3$} (00);
\node[inner sep=0.8pt] (01) at (0,1) {$. $}
	edge[->,dashed,red] node[auto,black,swap] {\tiny $f_3$} (00);
\node[inner sep=0.8pt] (11) at (1,1) {$. $}
	edge[->,blue] node[auto,black,swap] {\tiny $e_2$} (01)
	edge[->,dashed,red] node[auto,black] {\tiny $f_2$} (10);
\node[inner sep=0.8pt]  at (0.5,0.5) {$\scriptstyle \lambda_3$};
\node[inner sep=0.8pt]  at (-0.07,-0.07) {$\scriptstyle w$};
\node[inner sep=0.8pt]  at (1.07,1.07) {$\scriptstyle v$};
\end{scope}

\begin{scope}[xshift=13cm]
\node[inner sep=0.8pt] (00) at (0,0) {$. $};
\node[inner sep=0.8pt] (10) at (1,0) {$. $}
	edge[->,blue] node[auto,black] {\tiny $e_3$} (00);
\node[inner sep=0.8pt] (01) at (0,1) {$. $}
	edge[->,dashed] node[auto,black,swap] {\tiny $f_3$} (00);
\node[inner sep=0.8pt] (11) at (1,1) {$. $}
	edge[->,blue] node[auto,black,swap] {\tiny $e_3$} (01)
	edge[->,dashed,red] node[auto,black] {\tiny $f_3$} (10);
\node[inner sep=0.8pt]  at (0.5,0.5) {$\scriptstyle \lambda_4$};
\node[inner sep=0.8pt]  at (-0.07,-0.07) {$\scriptstyle w$};
\node[inner sep=0.8pt]  at (1.07,1.07) {$\scriptstyle w$};
\end{scope}
\end{tikzpicture}
\]
\noindent

As we observed in Equation \eqref{eq:2graph-insplit} of Example \ref{eq:the example}, if we define 
\begin{eqnarray*}
    \mathcal{G}_v^1= \{f_1, e_1\}, \, 
    \mathcal{G}_w^1= \{f_2, e_2\}, \, \mathcal{G}_w^2= \{f_3, e_3\},
\end{eqnarray*}
then the partition $ \mathcal G = \{\mathcal{G}_v^1, \mathcal{G}_w^1, \mathcal{G}_w^2\}$ of $\Lambda^{\varepsilon_1} \sqcup \Lambda^{\varepsilon_2}$ satisfies the pairing condition.  Applying the construction of Theorem \ref{thm:Priyanga} to this partition, we see that 
\begin{align*}
    \mathcal{E}_v^1= \{e_1\}, & \quad 
    \mathcal{E}_w^1= \{e_2\},  \quad \mathcal{E}_w^2 = \{e_3\}, \quad \text{and}  \\
    \mathcal{F}_{f_1}^1 & = \{\lambda_1\}, \quad
    \mathcal{F}_{f_2}^1= \{\lambda_2\}, \quad
    \mathcal{F}_{f_3}^1= \{\lambda_3\},  \quad \mathcal{F}_{f_3}^2= \{\lambda_4\}.
\end{align*}

Let $\wt{F}$ be the directed graph insplitting of $F$ at every vertex $f_j\in F^0$ using the partition $\{\mathcal{F}^i_{f_j}\}_{i\leq 2}$, and $\wt{E}$ be the directed graph insplitting of $E$ at every vertex $z\in E^0$ using the partition $\{\mathcal{E}_z^i\}_{i\leq 2}$. 
Since the partition $\{\mathcal{F}^i_{f_j}\}_{i\leq 2}$ is precisely that of Equation \eqref{eq:F-A-partition}, we have   $\wt F = F_A$.  Similarly, $\wt E = E_{\Lambda_I}.$ The definition of $\wt p, \wt q: \wt F \to \wt E$ from Theorem \ref{thm:Priyanga} tells us that the commuting squares of $\wt T$ are 

\[
\begin{tikzpicture}

\begin{scope}[xshift=4cm]

\node[inner sep=0.8pt] (00) at (0,0) {$. $};
\node[inner sep=0.8pt] (10) at (1,0) {$. $}
	edge[->,blue] node[auto,black] {\tiny $e_1^1$} (00);
\node[inner sep=0.8pt] (01) at (0,1) {$. $}
	edge[->,dashed,red] node[auto,black,swap] {\tiny $f_1^1$} (00);
\node[inner sep=0.8pt] (11) at (1,1) {$. $}
	edge[->,blue] node[auto,black,swap] {\tiny $e_1^1$} (01)
	edge[->,dashed,red] node[auto,black] {\tiny $f_1^1$} (10);
\node[inner sep=0.8pt]  at (0.5,0.5) {$\scriptstyle \lambda_1^1$};
\node[inner sep=0.8pt]  at (-0.10,-0.10) {$\scriptstyle v^1$};
\node[inner sep=0.8pt]  at (1.10,1.10) {$\scriptstyle v^1$};
\end{scope}

\begin{scope}[xshift=7cm]
\node[inner sep=0.8pt] (00) at (0,0) {$. $};
\node[inner sep=0.8pt] (10) at (1,0) {$. $}
	edge[->,blue] node[auto,black] {\tiny $e_2^1$} (00);
\node[inner sep=0.8pt] (01) at (0,1) {$. $}
	edge[->,dashed,red] node[auto,black,swap] {\tiny $f_2^1$} (00);
\node[inner sep=0.8pt]  (11) at (1,1) {$. $}
	edge[->,blue] node[auto,black,swap] {\tiny $e_1^1$} (01)
	edge[->,dashed,red] node[auto,black] {\tiny $f_1^1$} (10);
\node[inner sep=0.8pt]  at (0.5,0.5) {$\scriptstyle \lambda_2^1$};
\node[inner sep=0.8pt]  at (-0.1,-0.1) {$\scriptstyle w^1$};
\node[inner sep=0.8pt]  at (1.1,1.1) {$\scriptstyle v^1$};

\end{scope}

\begin{scope}[xshift=10cm]
\node[inner sep=0.8pt] (00) at (0,0) {$. $};
\node[inner sep=0.8pt] (10) at (1,0) {$. $}
	edge[->,blue] node[auto,black] {\tiny $e_3^1$} (00);
\node[inner sep=0.8pt] (01) at (0,1) {$. $}
	edge[->,dashed,red] node[auto,black,swap] {\tiny $f_3^1$} (00);
\node[inner sep=0.8pt] (11) at (1,1) {$. $}
	edge[->,blue] node[auto,black,swap] {\tiny $e_2^1$} (01)
	edge[->,dashed,red] node[auto,black] {\tiny $f_2^1$} (10);
\node[inner sep=0.8pt]  at (0.5,0.5) {$\scriptstyle \lambda_3^1$};
\node[inner sep=0.8pt]  at (-0.1,-0.1) {$\scriptstyle w^2$};
\node[inner sep=0.8pt]  at (1.1,1.1) {$\scriptstyle v^1$};
\end{scope}

\begin{scope}[xshift=13cm]
\node[inner sep=0.8pt] (00) at (0,0) {$. $};
\node[inner sep=0.8pt] (10) at (1,0) {$. $}
	edge[->,blue] node[auto,black] {\tiny $e_3^2$} (00);
\node[inner sep=0.8pt] (01) at (0,1) {$. $}
	edge[->,dashed] node[auto,black,swap] {\tiny $f_3^2$} (00);
\node[inner sep=0.8pt] (11) at (1,1) {$. $}
	edge[->,blue] node[auto,black,swap] {\tiny $e_3^1$} (01)
	edge[->,dashed,red] node[auto,black] {\tiny $f_3^1$} (10);
\node[inner sep=0.8pt]  at (0.5,0.5) {$\scriptstyle \lambda_4^1$};
\node[inner sep=0.8pt]  at (-0.1,-0.1) {$\scriptstyle w^2$};
\node[inner sep=0.8pt]  at (1.1,1.1) {$\scriptstyle w^1$};
\end{scope}

\begin{scope}[xshift=16cm]
\node[inner sep=0.8pt] (00) at (0,0) {$. $};
\node[inner sep=0.8pt] (10) at (1,0) {$. $}
	edge[->,blue] node[auto,black] {\tiny $e_3^2$} (00);
\node[inner sep=0.8pt] (01) at (0,1) {$. $}
	edge[->,dashed] node[auto,black,swap] {\tiny $f_3^2$} (00);
\node[inner sep=0.8pt] (11) at (1,1) {$. $}
	edge[->,blue] node[auto,black,swap] {\tiny $e_3^2$} (01)
	edge[->,dashed,red] node[auto,black] {\tiny $f_3^2$} (10);
\node[inner sep=0.8pt]  at (0.5,0.5) {$\scriptstyle \lambda_4^2$};
\node[inner sep=0.8pt]  at (-0.1,-0.1) {$\scriptstyle w^2$};
\node[inner sep=0.8pt]  at (1.1,1.1) {$\scriptstyle w^2$};
\end{scope}

\end{tikzpicture}\]
From the given commuting squares, we see that the textile system $\wt{T}$ is LR.  Moreover,   the associated 2-graph $\Lambda_{\wt{T}}$ is precisely the 2-graph $\Lambda_I$ (described in Equation \eqref{ex:2-graph LambdaI} from Example \ref{eq:the example}) arising from insplitting $\Lambda$ using the partition $\mathcal G.$

\end{example}

\subsection{Which textile insplits yield 2-graph insplits?} \label{Sami sec}

The following Theorem explains how to identify when a Johnson--Madden insplitting of an LR textile system could alternatively be obtained from  a 2-graph insplitting. In other words, while we know from Theorem \ref{thm:JM-insplit-not-LR} that Johnson--Madden insplitting never yields an LR textile system,  Theorem \ref{thm:lr-insplit=2g-insplit} identifies which Johnson--Madden insplittings of an LR textile system can be combined with a directed-graph insplitting of the base graph $E$ to yield an LR textile system. 
Thus, the insplittings described in the theorem below take an LR textile system to an LR textile system.

\begin{theorem}\label{thm:lr-insplit=2g-insplit}
    Let $T = (p,q: F \to  E)$ be an LR textile system  in which $p$ is surjective and $F$ is source-free, and let $\La = \La_T$ be the corresponding 2-graph. Suppose for each $v \in F^0$ we have a partition $\mathcal F_v = \{ \mc{F}_v^1, \dots, \mc{F}_v^{m(v)}\}$ of $vF^1$, and let $\mc{F} = \{\mc{F}_v\}_{v\in F^0}$. Suppose that $\mc{F}$ satisfies 
    \begin{enumerate}
        \item\label{lr-is-hyp-1} If $v, w \in F^0$ and $p (\mathcal F^i_v) \cap p(\mathcal F^j_w) \not= \emptyset$, then $p(\mathcal F^i_v) = p(\mathcal F^j_w);$ and
        \item\label{lr-is-hyp-2} For all $v \in F^0$, there exists $w \in F^0$ and $1 \leq j \leq m(w)$ such that $q(v F^1) \subseteq p(\mathcal F^j_w)$.
    \end{enumerate}
    Let $T_{JM}= (p_{JM}, q_{JM}:F_{JM} \to E_{JM})$ be the result of performing Johnson--Madden insplitting with respect to the partition $\mathcal F$. Then there is a directed graph insplit $\widetilde E$ of $E_{JM}$, together with graph homomorphisms $\wt p, \wt q: F_{JM} \to \widetilde E$, so that the textile system $\widetilde T := (\wt{p}, \wt{q}: F_{JM}\to \widetilde E)$ is LR. 
    
    Moreover, $\widetilde T$ can be identified as the textile system $T_{\La_I}$ of 
    a 
    2-graph $\La_I$ which arises from 2-graph insplitting on $\La$ 
   
    using a partition $\mathcal G$ (described in Proposition  \ref{lr-insplit-3} below) of $\Lambda^{1}$ which arises from $\mathcal F.$
\end{theorem}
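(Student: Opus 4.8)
The plan is to run the construction of Theorem~\ref{thm:Priyanga} in reverse: there the 2-graph partition $\mathcal G$ was given and used to manufacture the $F$- and $E$-partitions, whereas here the $F$-partition $\mathcal F$ is given and I must reconstruct a compatible 2-graph partition $\mathcal G$ of $\Lambda^1 = E^1 \sqcup F^0$ (this is the content of the forthcoming Proposition~\ref{lr-insplit-3}). First I would insplit $E$. Since $T$ is LR, for each $v \in F^0$ the restriction $p|_{vF^1}\colon vF^1 \to p(v)E^1$ is a bijection by unique $r$-path lifting; consequently $\{p(\mathcal F_v^i)\}_{i=1}^{m(v)}$ is a partition of $p(v)E^1$ into $m(v)$ nonempty pieces. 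Surjectivity of $p$ lets me realize every $z \in E^0$ as $z = p(v)$, and Hypothesis~(\ref{lr-is-hyp-1}) guarantees that for any two $v,v'$ with $p(v) = p(v') = z$ these partitions of $zE^1$ coincide \emph{as partitions}. Thus there is a well-defined partition $\mathcal E_z = \{\mathcal E_z^1,\dots,\mathcal E_z^{n(z)}\}$ of $zE^1$ (agreeing with \eqref{eq:E-partition-sets}) with $n(z) = m(v)$ for every $v$ with $p(v)=z$, and I let $\widetilde E$ be the directed-graph insplit of $E$ along $\{\mathcal E_z\}$.

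Next I would assemble $\mathcal G$ from $\mathcal E$ by setting, for $z \in E^0$,
\[ \mathcal G_z^k := \mathcal E_z^k \ \sqcup\ \{\, w \in F^0 : q(w) = z,\ q(wF^1) \subseteq \mathcal E_z^k \,\}. \]
Here Hypothesis~(\ref{lr-is-hyp-2}) does the crucial work: it says precisely that $q(wF^1)$ lands inside a single set $p(\mathcal F_{w'}^j) = \mathcal E_z^{\bullet}$, so each $w$ is assigned to a \emph{unique} class, while source-freeness of $F$ makes $wF^1$, hence $q(wF^1)$, nonempty so that this $k$ exists. I would then check that $\mathcal G$ satisfies the pairing condition of Definition~\ref{def:2-graph-insplit}: for $\lambda \in F^1 = \Lambda^{\varepsilon_1+\varepsilon_2}$ the two edges of $z\Lambda^1$ meeting at the common range of the square are $q(\lambda) \in E^1$ and $r(\lambda) \in F^0$ (using Remark~\ref{rmk:Lambda-T-range-source}), and by construction $w := r(\lambda)$ lies in $\mathcal G_z^k$ with $q(\lambda) \in q(wF^1) \subseteq \mathcal E_z^k$, so the two agree. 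Hence $\mathcal G$ is a legitimate insplitting partition and Theorem~\ref{thm:inplitcomplete} produces the insplit 2-graph $\Lambda_I$.

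With $\mathcal E$ and $\mathcal G$ in hand I would define $\widetilde p,\widetilde q\colon F_{JM}\to\widetilde E$ by the formulas of Theorem~\ref{thm:Priyanga}: $\widetilde p(\lambda^j)$ is the copy of $p(\lambda)$ whose source-superscript matches that of $s(\lambda)^j$, and $\widetilde q(\lambda^j) := q(\lambda)^k$ where $k$ is the $\mathcal G$-class of $s(\lambda)$. The latter is well-defined precisely because of Hypothesis~(\ref{lr-is-hyp-2}) and is independent of $j$. Checking that $\widetilde p,\widetilde q$ respect range and source is the same bookkeeping as in Theorem~\ref{thm:Priyanga}, after which one shows $\widetilde p$ is left resolving and $\widetilde q$ right resolving; by Lemma~\ref{lem:upl gives texsys} this makes $\widetilde T = (\widetilde p,\widetilde q\colon F_{JM}\to\widetilde E)$ an LR textile system. (Alternatively, once $\widetilde T$ is LR, Theorem~\ref{thm:PST} shows $\Lambda_{\widetilde T}$ is a 2-graph, and the identification below makes LR-ness automatic.) Finally I would prove $\widetilde T = T_{\Lambda_I}$ exactly as in part~(3) of Theorem~\ref{thm:Priyanga}: match $\widetilde\Lambda^0 = \widetilde E^0$ with $\Lambda_I^0$; match the colour-$1$ and colour-$2$ edge sets (using $n(p(v)) = m(v)$ and $s_\Lambda(v)=p(v)$); verify the range and source maps agree; and check that every blue--red path factors the same way under $\sim_{\widetilde\Lambda}$ and $\sim_{\Lambda_I}$, so that the two 2-graphs coincide.

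The main obstacle is the index bookkeeping, and in particular making transparent that the two hypotheses are exactly what the construction requires. Hypothesis~(\ref{lr-is-hyp-1}) is what lets the sets $p(\mathcal F_v^i)$ be reinterpreted as a single well-defined partition of each $zE^1$, i.e.\ the colour-$1$ part of $\mathcal G$, whereas Hypothesis~(\ref{lr-is-hyp-2}) is what lets each $w\in F^0$ be assigned consistently to one $\mathcal G$-class, i.e.\ the colour-$2$ part of $\mathcal G$ (equivalently, the well-definedness of $\widetilde q$). A subtler point I would have to treat carefully is that the superscript $j$ on $\lambda^j\in F_{JM}^1$ records an $\mathcal F$-class index while the superscripts on $\widetilde E$ record $\mathcal E$-class indices; since Hypothesis~(\ref{lr-is-hyp-1}) only identifies the two partitions \emph{as sets}, a fixed relabeling (the bijection $i\mapsto k$ with $p(\mathcal F_v^i) = \mathcal E_{p(v)}^k$) must be threaded through every formula for $\widetilde p$ and $\widetilde q$ to guarantee they are genuine graph homomorphisms. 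Pinning down this relabeling is exactly what Proposition~\ref{lr-insplit-3} accomplishes, and once it is fixed the remaining verifications are routine.
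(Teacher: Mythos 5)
Your proposal is correct and follows essentially the same route as the paper's own proof: the paper likewise builds the partition $\{\mathcal{E}^i_z\}$ of $E^1$ from $\mathcal{F}$ using left-resolving-ness and Hypothesis (1) (Propositions \ref{prop:lr-ptn}, \ref{prop:lr-insplit-1'} and \ref{lr-insplit-2}, with your index-relabeling subtlety handled by Remark \ref{rmk:same-index}), assembles the 2-graph partition $\mathcal{G}^i_z = \mathcal{E}^i_z \sqcup r_F(q^{-1}(\mathcal{E}^i_z))$ — which agrees with your description of the $F^0$-part by Lemma \ref{lem:p-and-q} and source-freeness of $F$ — and verifies the pairing condition in Proposition \ref{lr-insplit-3}, with $\wt p, \wt q$ defined exactly as you propose. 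The only cosmetic difference is that the paper deduces LR-ness of $\widetilde{T}$ from the identification $\widetilde{T} = T_{\Lambda_I}$ in Theorem \ref{lr-insplit-4} (your stated alternative) rather than by the direct resolving-property verification you suggest as the primary route.
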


Theorem \ref{thm:lr-insplit=2g-insplit} will be proved in a series of propositions.

\begin{proposition}\label{prop:lr-ptn}
    Let $T=(p,q:F\to E)$ be an LR textile system, and for each $v \in F^0$, $\mc{F}_v = \{ \mc{F}_v^i : i=1, \dots, m(v)\}$ be a partition of $vF^1$. Then:
    \begin{enumerate}
        \item\label{lr-ptn-1} if $v \in F^0$ and $p(\mc{F}_v^i) \cap p(\mc{F}_v^j) \neq \emptyset$, then $i=j$, and
        \item\label{lr-ptn-2} if $v, w \in F^0$ and $p(v) = p(w)$, then for every $i \in \{1, \dots, m(v)\}$, there exists $j \in \{1, \dots, m(w)\}$ such that $p(\mc{F}_v^i) \cap p(\mc{F}_w^j) \neq \emptyset$.
    \end{enumerate}
\end{proposition}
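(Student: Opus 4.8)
The plan is to derive both statements directly from the defining feature of an LR textile system, namely that $p$ has unique $r$-path lifting (Definitions~\ref{dfn:resolving}). Part~\eqref{lr-ptn-1} will use the \emph{uniqueness} half of $r$-path lifting, whereas part~\eqref{lr-ptn-2} will use the \emph{existence} half; in both cases the graph-homomorphism identity $r_E \circ p = p \circ r_F$ is what lets me convert the given hypothesis into a statement to which the lifting property applies.

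For part~\eqref{lr-ptn-1}, I would argue directly. Suppose $p(\mathcal F_v^i) \cap p(\mathcal F_v^j) \neq \emptyset$, so there are $\lambda \in \mathcal F_v^i$ and $\mu \in \mathcal F_v^j$ with $p(\lambda) = p(\mu) =: e \in E^1$. Since $\lambda, \mu \in vF^1$ we have $r(\lambda) = r(\mu) = v$, and because $p$ is a graph homomorphism, $r_E(e) = p(r(\lambda)) = p(v)$. Unique $r$-path lifting then guarantees a \emph{unique} $f \in F^1$ with $r(f) = v$ and $p(f) = e$; as both $\lambda$ and $\mu$ satisfy these two conditions, $\lambda = \mu$. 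Since $\mathcal F_v$ is a partition of $vF^1$, a single edge lies in exactly one block, forcing $i = j$.

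For part~\eqref{lr-ptn-2}, fix $v, w \in F^0$ with $p(v) = p(w)$ and an index $i \in \{1, \dots, m(v)\}$. As blocks of a partition are nonempty, I would choose $\lambda \in \mathcal F_v^i$ and set $e := p(\lambda)$; then $r_E(e) = p(r(\lambda)) = p(v) = p(w)$. The existence half of $r$-path lifting for $p$ now produces some $\mu \in F^1$ with $r(\mu) = w$ and $p(\mu) = e$. This $\mu$ lies in $wF^1$, hence in some block $\mathcal F_w^j$, so $e = p(\mu) \in p(\mathcal F_w^j)$, while also $e = p(\lambda) \in p(\mathcal F_v^i)$. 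Thus $p(\mathcal F_v^i) \cap p(\mathcal F_w^j) \neq \emptyset$, as required.

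I do not expect a genuine obstacle here: the argument is essentially a bookkeeping exercise in correctly tracking range versus source and invoking the two halves of the $r$-path lifting property. The only point needing mild care is confirming that the hypothesis $p(v) = p(w)$ (together with $\lambda$ having range $v$) really does place $e$ in the configuration $r_E(e) = p(w)$ needed to apply lifting at the vertex $w$; this is immediate once one uses that $p$ commutes with the range maps.
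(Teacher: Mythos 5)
Your proof is correct and follows essentially the same route as the paper's: part~(1) uses the uniqueness half of $r$-path lifting for $p$ together with the partition property to force $i=j$, and part~(2) uses the existence half of $r$-path lifting at $w$ to produce an edge in some block $\mathcal{F}_w^j$ hitting $p(\mathcal{F}_v^i)$. The only cosmetic difference is that you explicitly flag the nonemptiness of partition blocks, which the paper uses implicitly.
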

\begin{proof}
    (\ref{lr-ptn-1}): Let $e \in p(\mc{F}_v^i) \cap p(\mc{F}_v^j)$. Then there are $\lambda \in \mc{F}_v^i$ and $\mu \in \mc{F}_v^j$ such that $p(\la) = p(\mu) = e$. 
    Because $p$ is a graph homomorphism, $p(r(\lambda)) = r(p(\lambda)) = r(e) = p(r(\mu))$, so by unique $r$-path lifting of $p$ (since $T$ is LR), $\la = \mu$. Thus, $\mc{F}_v^i \cap \mc{F}_v^j \not= \emptyset$. Since $\{\mc{F}_v^k : k=1, \dots, m(v)\}$ is a partition of $vF^1$, this implies $i=j$.  
    
    (\ref{lr-ptn-2}): Let $e \in p(\mc{F}_v^i)$. Then $r(e) = p(v) = p(w)$, 
    so by unique $r$-path lifting for $p$, there exists a unique $\eta \in F^1$ such that $r(\eta) = w$ and $p(\eta) = e$. 
    We have $\eta \in \mc{F}_w^j$ for some $j \in \{1, \dots, m(w)\}$. Hence, $e \in p(\mc{F}_v^i) \cap p(\mc{F}_w^j)$.
\end{proof}

\begin{proposition}\label{prop:lr-insplit-1'}
    Let $T=(p,q:F\to E)$ be an LR textile system with a partition $\mc{F}$ of $F^1$ satisfying condition (\ref{lr-is-hyp-1}) of Theorem \ref{thm:lr-insplit=2g-insplit}. If $v, w \in F^0$ with $p(v) = p(w)$, then $m(v) = m(w)$.
\end{proposition}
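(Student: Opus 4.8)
The plan is to show that the two collections of edge-sets
\[
\mathcal P_v := \{ p(\mathcal F_v^i) : 1 \le i \le m(v)\} \quad\text{and}\quad \mathcal P_w := \{ p(\mathcal F_w^j) : 1 \le j \le m(w)\}
\]
coincide as collections of \emph{distinct, nonempty} subsets of $E^1$, whence counting their elements forces $m(v)=m(w)$. The whole argument is a bookkeeping exercise built on Proposition \ref{prop:lr-ptn} together with hypothesis (\ref{lr-is-hyp-1}).

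First I would record that each collection consists of pairwise disjoint nonempty sets. Each block $\mathcal F_v^i$ of the partition is nonempty by definition, so its image $p(\mathcal F_v^i)$ is a nonempty subset of $E^1$. By Proposition \ref{prop:lr-ptn}(\ref{lr-ptn-1}), whenever $p(\mathcal F_v^i)\cap p(\mathcal F_v^j)\neq\emptyset$ we must have $i=j$; hence the sets $p(\mathcal F_v^1),\dots,p(\mathcal F_v^{m(v)})$ are pairwise disjoint, and in particular pairwise distinct. Thus $\mathcal P_v$ has exactly $m(v)$ elements, and symmetrically $\mathcal P_w$ has exactly $m(w)$ elements.

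Next I would show $\mathcal P_v \subseteq \mathcal P_w$. Fix $i$. Since $p(v)=p(w)$, Proposition \ref{prop:lr-ptn}(\ref{lr-ptn-2}) supplies some $j$ with $p(\mathcal F_v^i)\cap p(\mathcal F_w^j)\neq\emptyset$; then hypothesis (\ref{lr-is-hyp-1}) of Theorem \ref{thm:lr-insplit=2g-insplit} upgrades this to the equality $p(\mathcal F_v^i)=p(\mathcal F_w^j)$, so $p(\mathcal F_v^i)\in\mathcal P_w$. (The index $j$ is in fact unique, by disjointness of the $p(\mathcal F_w^j)$ from the previous paragraph, though uniqueness is not strictly needed for the set-containment.) Running the identical argument with the roles of $v$ and $w$ interchanged---legitimate because $p(w)=p(v)$---gives $\mathcal P_w\subseteq\mathcal P_v$, hence $\mathcal P_v=\mathcal P_w$. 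Comparing the cardinalities computed in the first step yields $m(v)=m(w)$.

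I do not expect a genuine obstacle here: the only thing to watch is that the correspondence $i\mapsto j$ really does match up the two collections rather than merely injecting one into the other, and this is exactly what hypothesis (\ref{lr-is-hyp-1}) guarantees by promoting ``nonempty intersection'' to ``equality.'' The mild subtlety worth flagging explicitly is that the equality $\mathcal P_v=\mathcal P_w$ is an equality of collections of \emph{sets} (not a labelled bijection of indices), so the conclusion $m(v)=m(w)$ relies on having already verified that within each collection the listed sets are distinct---which is precisely the content of Proposition \ref{prop:lr-ptn}(\ref{lr-ptn-1}).
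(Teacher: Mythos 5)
Your proof is correct and follows essentially the same route as the paper: both arguments combine Proposition \ref{prop:lr-ptn}(\ref{lr-ptn-2}) with hypothesis (\ref{lr-is-hyp-1}) to match blocks of the two partitions, and use Proposition \ref{prop:lr-ptn}(\ref{lr-ptn-1}) to ensure the matching is one-to-one. The only cosmetic difference is that you phrase the conclusion as an equality of the two collections $\{p(\mathcal F_v^i)\}_i$ and $\{p(\mathcal F_w^j)\}_j$ followed by a cardinality count, whereas the paper exhibits the explicit index bijection $i \mapsto j_i$ and checks injectivity and surjectivity directly.
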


\begin{proof}
    By Proposition \ref{prop:lr-ptn}(\ref{lr-ptn-2}) and condition (\ref{lr-is-hyp-1}) of Theorem \ref{thm:lr-insplit=2g-insplit}, if $p(v) = p(w)$, then for each $i \in \{1, \dots, m(v)\}$, there exists $j_i \in \{1, \dots, m(w)\}$ such that $p(\mc{F}_v^i) = p(\mc{F}_w^{j_i})$. If $i, k \in \{1, \dots, m(v)\}$ and $J:= j_i = j_k$, then $p(\mc{F}_v^i) = p(\mc{F}_w^J) = p(\mc{F}_v^k)$, so $i = k$, hence $i \mapsto j_i$ is injective. The map is surjective because for each $k \in \{1, \dots, m(w)\}$, there is $i_k \in \{1, \dots, m(v)\}$ such that $p(\mc{F}_w^k) = p(\mc{F}_v^{i_k})$ (again, by combining Proposition \ref{prop:lr-ptn}(\ref{lr-ptn-2}) and condition (\ref{lr-is-hyp-1}) of Theorem \ref{thm:lr-insplit=2g-insplit}) so $k = j_{i_k}$.
    Hence, $m(v) = m(w)$.
\end{proof}

\begin{remark}
We can thus assume without loss of generality that for each $z \in E^0$, for all $v, w \in p\inv(z)$ and $i=1,\dots,m(v)$, $p(\mc{F}_v^i) = p(\mc{F}_w^i)$, and we are justified in setting $m(z) = m(v)$.
\label{rmk:same-index}
\end{remark}

\begin{proposition}\label{lr-insplit-2} 
    Under the assumptions of Theorem \ref{thm:lr-insplit=2g-insplit}, the collection $$\mc{E}_z =\{ \mathcal E^{i}_z := p(\mathcal F^i_v) \mid p(v)=z, i=1, \dots, m(z) \}$$ 
    is a partition of $zE^1$ for each $z \in E^0$. If $\widetilde E$ is the insplit of $E$ with respect to this partition,
    then the maps $\wt p, \wt q : F_{JM} \to \widetilde E$ defined, 
    {for $v \in F^0,$ $\la \in F^1$, by 
    \[\wt p(\lambda^i) = p(\lambda)^{i} , \qquad \wt p(v^i) = p(v)^{i},\]
    and, if $s_F(\la) = v$ and $q(vF^1) \sse p(\mc{F}_w^j),$ 
    \[\wt q(\lambda^i)= q(\lambda)^{j}, \qquad  \wt q(v^i) = q(v)^{j},\]}
     
    are graph homomorphisms, and so $\widetilde T := (\wt p, \wt q: F_{JM} \to \widetilde E)$ is a textile system.
    
\end{proposition}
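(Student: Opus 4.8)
The plan is to dispatch the proposition in four stages: well-definedness of the collection $\mathcal E_z$ together with the partition claim; well-definedness of the two maps; the graph-homomorphism property; and finally the textile-system property. Throughout I would lean on Remark \ref{rmk:same-index}, which lets me set $m(z) := m(v)$ for any $v \in p^{-1}(z)$ and guarantees that $\mathcal E^i_z := p(\mathcal F^i_v)$ is independent of the choice of $v \in p^{-1}(z)$; this is what makes $\mathcal E_z$ a meaningful object indexed by $1 \leq i \leq m(z)$.

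For the partition claim I would check the three defining properties. Each $\mathcal E^i_z$ is nonempty because $\mathcal F^i_v$ is. The sets cover $zE^1$: given $e$ with $r(e) = z$, surjectivity of $p$ yields $\lambda \in F^1$ with $p(\lambda) = e$, and since $p$ is a graph homomorphism the vertex $v := r(\lambda)$ has $p(v) = r(e) = z$, so $e \in p(\mathcal F^i_v) = \mathcal E^i_z$ where $\lambda \in \mathcal F^i_v$. Disjointness is exactly Proposition \ref{prop:lr-ptn}(\ref{lr-ptn-1}): fixing one $v$ over $z$, if $\mathcal E^i_z \cap \mathcal E^j_z \neq \emptyset$ then $p(\mathcal F^i_v) \cap p(\mathcal F^j_v) \neq \emptyset$, forcing $i = j$.

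Next I would verify that the formulas make sense. For $\wt p$ the only content is that the superscript ranges match: an edge $\lambda^i \in F_{JM}^1$ carries $1 \leq i \leq m(s_F(\lambda))$, while $p(\lambda)^i \in \wt E^1$ needs $1 \leq i \leq m(s_E(p(\lambda)))$, and these agree since $s_E(p(\lambda)) = p(s_F(\lambda))$ and $m(p(u)) = m(u)$ by Remark \ref{rmk:same-index} (the vertex case is identical). The delicate point is $\wt q$: I must show the index $j$ exists and is unique. Hypothesis (\ref{lr-is-hyp-2}) supplies at least one pair $(w,j)$ with $q(vF^1) \subseteq p(\mathcal F^j_w)$, and because $F$ is source-free the set $vF^1$, hence $q(vF^1)$, is nonempty. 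Every $q(\mu)$ with $\mu \in vF^1$ has range $q(v)$, so the containment forces $p(w) = q(v)$ and thus $q(vF^1) \subseteq \mathcal E^j_{q(v)}$; since $\mathcal E_{q(v)}$ is a partition, $j$ is the unique class index with this property, it depends only on $v = s_F(\lambda)$, and $1 \leq j \leq m(q(v))$ makes $q(\lambda)^j$ and $q(v)^j$ legitimate.

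The homomorphism checks are then bookkeeping using the source/range formulas of $F_{JM}$ and $\wt E$ and the disjointness of the partitions. I expect the $r$-compatibility of $\wt q$ to be the main obstacle, as it requires reconciling two differently-defined superscripts: writing $u := r(\lambda)$, one has $r_{\wt E}(\wt q(\lambda^i)) = q(\lambda)^l$ with $q(\lambda) \in \mathcal E^l_{q(u)}$, while $\wt q(r_{F_{JM}}(\lambda^i)) = \wt q(u^k) = q(u)^{j''}$ with $j''$ attached to the vertex $u$; the reconciliation is that $\lambda \in uF^1$ gives $q(\lambda) \in q(uF^1) \subseteq \mathcal E^{j''}_{q(u)}$, so the partition forces $l = j''$. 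The $s$-compatibility of $\wt q$ follows because the index attached to $\lambda^i$ and to $s_{F_{JM}}(\lambda^i) = s_F(\lambda)^i$ is the same index for $v = s_F(\lambda)$; the two checks for $\wt p$ are immediate. Finally, to conclude $\wt T$ is a textile system I would verify injectivity of $A(\lambda^i) = (r(\lambda^i), \wt p(\lambda^i), s(\lambda^i), \wt q(\lambda^i))$: the coordinate $\wt p(\lambda^i) = p(\lambda)^i$ recovers both $p(\lambda)$ and $i$, the coordinate $r(\lambda^i) = r(\lambda)^k$ recovers $r(\lambda)$, and unique $r$-path lifting of $p$ (from LR-ness of $T$) then pins down $\lambda$; alternatively one can note $\wt p$ is left-resolving and invoke Lemma \ref{lem:upl gives texsys}.
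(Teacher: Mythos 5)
Your proof is correct and follows essentially the same route as the paper's: surjectivity of $p$ together with Proposition \ref{prop:lr-ptn} and Remark \ref{rmk:same-index} for the partition claim, source-freeness of $F$ plus Hypothesis (1) of Theorem \ref{thm:lr-insplit=2g-insplit} for the well-definedness of $\wt q$, and the same reconciliation of superscripts (via disjointness of the sets $\mathcal E^i_z$) in the range-compatibility check for $\wt q$. The one point where you go beyond the paper's own proof of this proposition is the explicit verification that the quadruple map $\lambda^i \mapsto (r(\lambda^i), \wt p(\lambda^i), s(\lambda^i), \wt q(\lambda^i))$ is injective; the paper leaves this step implicit here (an analogous injectivity argument appears only in the proof of Theorem \ref{thm:Priyanga}, and LR-ness of $\wt T$ is otherwise deduced from the identification $\wt T = T_I$ in Theorem \ref{lr-insplit-4}), so your inclusion of it makes the argument more self-contained.
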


\begin{proof}
To ease the burden of notation in this proof, we abuse notation and use the same symbols $r,s$ for the range and source maps in every graph which appears in the proof.  We trust that the context will suffice to indicate the domain and range of each occurrence of $r, s$.

    Since $p$ is surjective, every $e \in zE^1$ is in some $\mathcal E_z^{i} = p(\mc{F}_v^i)$. Moreover, Remark \ref{rmk:same-index} and Condition \eqref{lr-is-hyp-1} of Theorem \ref{thm:lr-insplit=2g-insplit} indicate that $\mathcal E^i_z \cap \mathcal E^j_z= \emptyset$ if $i\not= j$, so $\mathcal E_z$ is indeed a partition of $z E^1.$ 
    
To see that $\wt p$ is well defined, suppose that $e = p(\lambda) = p(\mu)$.  Then $s(e) = s(p(\lambda)) = p(s(\lambda)) = p(s(\mu))$, so  Remark \ref{rmk:same-index} implies that insplitting $E$ and $F$ creates  the same number of ``copies'' of $e, \lambda,$ and $\mu$.  It follows that  
 
 $\wt{p}$ is well-defined as claimed. To see the same for $\wt{q}$, we suppose $q(vF^1) \sse p(\mc{F}_w^j) \cap 
p(\mc{F}_u^k)$. Since $F$ is assumed to be source-free, $v F^1 \not= \emptyset$ and consequently $q(vF^1) \neq \emptyset$. 
    We then have $p(\mc{F}_w^j) \cap  p(\mc{F}_u^k) \neq \emptyset$, and so Condition \eqref{lr-is-hyp-1} of Theorem \ref{thm:lr-insplit=2g-insplit} yields $p(\mc{F}_w^j) =  p(\mc{F}_u^k)$. Remark \ref{rmk:same-index} implies $j = k$, 
    and hence $\wt q$ is well-defined.

    Now we check that $\wt{p}$ and $\wt{q}$ are graph homomorphisms. First, suppose $\la \in \mc{F}_u^k$, with $s(\la) = v$, $q(vF^1) \sse p(\mc{F}_w^j)$, and {$q(uF^1) \sse p(\mc{F}_{\hat u}^n)$}. Let $e = p(\la)$, $f = q(\la)$, $z = p(u),$ $x = q(u) = p(\hat u),$ $y = p(v),$ and $t = q(v) = p(w)$. 
    \[ 
    \begin{tikzpicture}[scale=1.6]
       
\node[inner sep=0.8pt] (00) at (0,0) {$. $};
\node[inner sep=0.8pt] (10) at (1,0) {$. $}
	edge[->,blue] node[auto,black] {$\scriptstyle  f \in p(\mathcal F^n_{\hat u})$} (00);

 \node at (2.5,0) {}
  edge [->,thick, dotted, blue] (10);
\node[inner sep=0.8pt] (01) at (0,1) {$. $}
	edge[->,dashed, red] node[auto,black,swap] {\tiny $u$} (00);
\node[inner sep=0.8pt] (11) at (1,1) {$. $}
	edge[->,blue] node[auto,black,swap] {\tiny $e$} (01)
	edge[->,dashed,red] node[auto,black] {\tiny $v$} (10);
\node[inner sep=0.8pt]  at (0.5,0.5) {$\scriptstyle \lambda \in \mathcal F^k_u$};
\node[inner sep=0.8pt]  at (-0.1,-0.1) {$\scriptstyle x$};
\node[inner sep=0.8pt] at (-0.1, 1.1) {$\scriptstyle z$};
\node[inner sep=0.8pt]  at (1.1,1.1) {$\scriptstyle y$};
\node[inner sep=0.8pt] at (1.1,-0.2) {$\scriptstyle t$};
\node at (2.2, 0.2) {$\scriptstyle q(vF^1) \subseteq p(\mathcal F^j_w)$};


    \end{tikzpicture}
    \]
    Since $p, q$ are graph homomorphisms, 
    \[r(e) = r(p(\la)) = p(r(\la)) = p(u) = z,\] and similarly, $s(e) = y$, $r(f) = x$, and $s(f) = t$.
    Now, 
    \begin{align*}
        \wt{p}(s(\la^i)) = \wt{p}(s(\la)^i) = \wt{p}(v^i) = p(v)^i = y^i, \quad \text{ and } \quad s(\wt{p}(\la^i)) = s(p(\la)^i) = s(e^i) = s(e)^i = y^i,
    \end{align*}
    so $\wt{p}(s(\la^i)) = y^i = s(\wt{p}(\la^i))$.
   
    Next, since $\lambda \in \mathcal F^k_u$, we have 
$   
        \wt{p}(r(\la^i)) = \wt{p}(u^k) 
        = p(u)^{k} 
        = z^{k}. 
        $
On the other hand,  since $e=p(\la) \in p(\mc{F}_u^k) = \mc{E}_z^k$, 
$
        r(\wt{p}(\la^i)) = r(p(\la)^i)= r(e^i)= z^k. 
$
  That is, $r(\wt{p}(\la^i)) = z^{k}= \wt{p}(r(\la^i))$, and we conclude that $\wt{p}$ is a graph homomorphism.

Now we check  that $\wt{q}$ is a graph homomorphism. First, since $\lambda \in \mathcal F^k_u$ and $q(uF^1) \subseteq p(\mathcal F^n_{\hat u}),$
\[ \wt q(r(\lambda^i)) = \wt q(u^k) = q(u)^n = x^n,\]

while the facts that $s(\lambda) = v$ and $q(v F^1) \subseteq p(\mathcal F^j_w)$ imply 
$ r(\wt q(\lambda^i)) = r(q(\lambda)^j) = r(f^j) = x^n,$

since 
$f \in q(u F^1) \sse p(\mc{F}_{\hat u}^n) = \mc{E}_x^n$. So $\wt{q}(r(\la^i)) = x^n = r(\wt{q}(\la^i))$.

Finally, since $q(vF^1) \sse p(\mc{F}_w^j)$, we have  $  \wt{q}(s(\la^i)) 
= \wt{q}(v^i) =  q(v)^{j} 
    = t^{j}, $
and
\begin{align*}
    s(\wt{q}(\la^i)) & = s(q(\la)^{j}) 
    = s(f^j) = s(f)^j = t^j.
\end{align*}
Thus $\wt{q}(s(\la^i)) = t^j = s(\wt{q}(\la^i))$, and $\wt q$ is also a graph homomorphism, as claimed.
\end{proof}

We have the following observation, which will be needed for the next proof. 

\begin{lemma}\label{lem:p-and-q}
    Under the conditions of Theorem \ref{thm:lr-insplit=2g-insplit}, if $u, v \in F^0$ and $\mu \in vF^1$ such that $q(\mu) \in p(\mc{F}_u^i)$, then $q(vF^1) \sse p(\mc{F}_u^i)$.
\end{lemma}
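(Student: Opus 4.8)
The plan is to derive the conclusion by a short, direct interlock of the two hypotheses of Theorem \ref{thm:lr-insplit=2g-insplit}, with no auxiliary constructions needed. The key idea is that Hypothesis (\ref{lr-is-hyp-2}) collects \emph{all} of $q(vF^1)$ inside a single partition-image block, and Hypothesis (\ref{lr-is-hyp-1}) then lets us identify that block with $p(\mc{F}_u^i)$ through the single shared element $q(\mu)$.

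Concretely, first I would apply Hypothesis (\ref{lr-is-hyp-2}) of Theorem \ref{thm:lr-insplit=2g-insplit} to the vertex $v$: this furnishes some $w \in F^0$ and an index $1 \leq j \leq m(w)$ with $q(vF^1) \sse p(\mc{F}_w^j)$. Next, since the given edge satisfies $\mu \in vF^1$, we have $q(\mu) \in q(vF^1) \sse p(\mc{F}_w^j)$; combined with the hypothesis $q(\mu) \in p(\mc{F}_u^i)$, the element $q(\mu)$ witnesses that $p(\mc{F}_u^i) \cap p(\mc{F}_w^j) \neq \emptyset$. At this point Hypothesis (\ref{lr-is-hyp-1}) applies and yields $p(\mc{F}_u^i) = p(\mc{F}_w^j)$, whence $q(vF^1) \sse p(\mc{F}_w^j) = p(\mc{F}_u^i)$, which is exactly the claim.

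There is no genuine obstacle here; the statement is essentially a bookkeeping consequence of the two conditions. The only point worth flagging is that the set $q(vF^1)$ is nonempty (so the argument is not vacuous), but this is immediate from the hypothesis $\mu \in vF^1$ itself, and in any case follows from source-freeness of $F$ (that $r$ is onto, so $vF^1 \neq \emptyset$ for every $v$) in the ambient setting of Theorem \ref{thm:lr-insplit=2g-insplit}. The substantive content being used is that condition (\ref{lr-is-hyp-2}) forces the $q$-images of all edges with range $v$ into one block, after which the common value $q(\mu)$ transfers the block identification via condition (\ref{lr-is-hyp-1}).
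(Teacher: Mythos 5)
Your proof is correct and is essentially identical to the paper's: both apply Hypothesis (2) of Theorem \ref{thm:lr-insplit=2g-insplit} to $v$ to obtain $q(vF^1) \sse p(\mc{F}_w^j)$, use $q(\mu)$ as the common element showing $p(\mc{F}_w^j) \cap p(\mc{F}_u^i) \neq \emptyset$, and conclude via Hypothesis (1) that the two sets coincide. (If anything, your citations are cleaner than the paper's, which misattributes the hypotheses to Proposition \ref{prop:lr-ptn}.)
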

\begin{proof}
    By Proposition \ref{prop:lr-ptn}(\ref{lr-ptn-2}), there exist $w \in F^0$ and $j \in \{1, \dots, m(w)\}$ such that $q(vF^1) \sse p(\mc{F}_w^j)$. So $q(\mu) \in p(\mc{F}_w^j) \cap p(\mc{F}_u^i)$. Hence, by condition (\ref{lr-is-hyp-1}) in Proposition \ref{prop:lr-ptn}, $p(\mc{F}_w^j) = p(\mc{F}_u^i)$.
\end{proof}

\begin{proposition}\label{lr-insplit-3}
Under the assumptions of Theorem \ref{thm:lr-insplit=2g-insplit}, let $\La = \La_T$ be the 2-graph of the LR textile system $T$, and for each $z \in \La^0 = E^0$ and each $i \in \{1, \dots, m(z)\}$, set
\[{{\mc{G}_z^{i} = r_F(q\inv(\mc{E}_z^{i})) \sqcup \mc{E}_z^{i}}}.\]
Then the collection ${\mc{G} = \{\mc{G}_z^{i} : z \in \La^0, i=1, \dots, m(z)\}}$ is a partition of $\La^{\varepsilon_1} \sqcup \Lambda^{\varepsilon_2}$ which satisfies the pairing condition for 2-graph insplitting. 
\end{proposition}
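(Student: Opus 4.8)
The plan is to verify, one at a time, the three requirements that Definitions \ref{dfn:pairing} impose on an insplitting partition: that each $\mathcal{G}_z^i$ consists of edges of $\Lambda$ with range $z$, that the sets $\{\mathcal{G}_z^i\}$ partition $\Lambda^{\varepsilon_1}\sqcup\Lambda^{\varepsilon_2}$ into nonempty pieces, and that the pairing condition holds. Throughout I would use the identifications of Remark \ref{rmk:Lambda-T-range-source}: $\Lambda^{\varepsilon_1}=E^1$ with $r_\Lambda=r_E$, and $\Lambda^{\varepsilon_2}=F^0$ with $r_\Lambda=q$. Under these, the $\varepsilon_1$-part of $\mathcal{G}_z^i$ is $\mathcal{E}_z^i\subseteq E^1$ and its $\varepsilon_2$-part is $r_F(q^{-1}(\mathcal{E}_z^i))\subseteq F^0$. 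First I would record the range computations: for $e\in\mathcal{E}_z^i$, writing $e=p(\lambda)$ with $\lambda\in\mathcal{F}_v^i$ and $p(v)=z$ gives $r_E(e)=p(r_F(\lambda))=z$; and for $w=r_F(\mu)$ with $q(\mu)\in\mathcal{E}_z^i$ we get $q(w)=r_E(q(\mu))=z$. Hence $\mathcal{G}_z^i\subseteq z\Lambda^1$. The $\varepsilon_1$-parts already partition $z\Lambda^{\varepsilon_1}=zE^1$ into nonempty sets by Proposition \ref{lr-insplit-2}, so each $\mathcal{G}_z^i$ is nonempty, and since $E^1\cap F^0=\emptyset$ the $\varepsilon_1$- and $\varepsilon_2$-parts never interfere.

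The crux is to show that the $\varepsilon_2$-parts $\{r_F(q^{-1}(\mathcal{E}_z^i))\}$ partition $\Lambda^{\varepsilon_2}=F^0$. For this I would establish the key claim: \emph{for each $w\in F^0$ there is exactly one pair $(z,i)$ with $q(w)=z$ such that $q(wF^1)\subseteq\mathcal{E}_z^i$.} Existence uses that $F$ is source-free, so $wF^1\neq\emptyset$; choosing any $\mu\in wF^1$ gives $q(\mu)\in zE^1$ with $z=q(w)$, and since $\{\mathcal{E}_z^i\}_i$ partitions $zE^1$ there is an $i$ with $q(\mu)\in\mathcal{E}_z^i$; then Lemma \ref{lem:p-and-q} upgrades this to $q(wF^1)\subseteq\mathcal{E}_z^i$. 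Uniqueness is immediate, because $\{\mathcal{E}_z^i\}_{z,i}$ partitions $E^1$ (as $\{zE^1\}_z$ partitions $E^1$) while $q(wF^1)$ is nonempty. This claim shows $w\in r_F(q^{-1}(\mathcal{E}_z^i))$ for precisely this $(z,i)$ and no other, giving both coverage of and disjointness among the $\varepsilon_2$-parts. I expect this disjointness to be the main obstacle: a priori a single $w$ could be the range of edges whose $q$-images fall in different $\mathcal{E}_z^i$, and it is exactly Lemma \ref{lem:p-and-q} together with source-freeness that rules this out.

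For the pairing condition I would invoke the reformulation in the remark following Definitions \ref{dfn:pairing}: it suffices to check that for each $\lambda\in\Lambda^{\varepsilon_1+\varepsilon_2}=F^1$ the edges $\lambda(0,\varepsilon_1)=q(\lambda)$ and $\lambda(0,\varepsilon_2)=r_F(\lambda)$ lie in the same $\mathcal{G}_z^j$, where $z=r_\Lambda(\lambda)=q(r_F(\lambda))$. (These identifications of the initial edges follow from the factorization $r_F(\lambda)\,p(\lambda)\sim q(\lambda)\,s_F(\lambda)$ of Definition \ref{dfn:2colgraph from T}.) Since $q(\lambda)\in E^1$ and $r_F(\lambda)\in F^0$, we have $q(\lambda)\in\mathcal{G}_z^j\iff q(\lambda)\in\mathcal{E}_z^j$ and $r_F(\lambda)\in\mathcal{G}_z^j\iff r_F(\lambda)\in r_F(q^{-1}(\mathcal{E}_z^j))$. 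The forward implication is trivial: $q(\lambda)\in\mathcal{E}_z^j$ gives $\lambda\in q^{-1}(\mathcal{E}_z^j)$, hence $r_F(\lambda)\in r_F(q^{-1}(\mathcal{E}_z^j))$. For the converse, $r_F(\lambda)\in r_F(q^{-1}(\mathcal{E}_z^j))$ yields some $\mu$ with $r_F(\mu)=r_F(\lambda)=:w$ and $q(\mu)\in\mathcal{E}_z^j$; the key claim then forces $q(wF^1)\subseteq\mathcal{E}_z^j$, and as $\lambda\in wF^1$ we conclude $q(\lambda)\in\mathcal{E}_z^j$. This verifies the pairing condition and, together with the partition statement, completes the proof. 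The same key claim thus does double duty, controlling both the $\varepsilon_2$-disjointness and the converse direction of pairing.
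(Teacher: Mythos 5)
Your proof is correct and follows essentially the same route as the paper's: both arguments rest on Proposition \ref{lr-insplit-2} (the $\mathcal{E}_z^i$ partition $E^1$), source-freeness of $F$ for coverage, and Lemma \ref{lem:p-and-q} for disjointness of the $F^0$-parts and for the nontrivial direction of the pairing condition. Your only departure is organizational — isolating the uniqueness of the pair $(z,i)$ with $q(wF^1)\subseteq\mathcal{E}_z^i$ as a single ``key claim'' used twice — which is a clean repackaging of exactly the steps the paper carries out inline.
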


\begin{proof}
   
    First we show that $\mc{G}$ is a partition of $\La^{\varepsilon_1}  \sqcup \Lambda^{\varepsilon_2} (=  E^1 \sqcup F^0)$. By 
    Proposition \ref{lr-insplit-2} the collection $\mc{E} = \{\mc{E}_z^i : z \in E^0 = \La^0, i=1, \dots, m(z)\}$ is a partition of $E^1$. Hence it suffices to show that the collection $\mc{C} = \{ F^0 \cap \mc{G}_z^i = r_F(q^{-1}(\mc{E}_z^i)) : z \in \La^0, i=1,\dots,m(z)\}$ is a partition of $F^0$.
    
    Suppose $v \in F^0$. Since $F$ is source-free, $v=r(\la)$ for some $\la \in F^1$. 
    Hence $q(\la) \in \mc{E}_z^{i}$ for some $z \in E^0, i\in \{1, \dots, m(z)\}$, and therefore $v = r(\lambda) \in r_F(q^{-1}(\mc{E}_z^{i})) $ lies in some set in $\mc C$. 
    To see that the sets in $\mc{C}$ are pairwise disjoint, observe first that the hypotheses of Theorem \ref{thm:lr-insplit=2g-insplit} guarantee that  $q(vF^1) \subseteq  p(\mathcal F^j_w) $ for 
    a unique set $p(\mathcal F^j_w).$
    Thus,    
    fix $v \in F^0$, and suppose that there exist $z, x \in \La^0 = E^0$ such that $v \in 
    r_F(q^{-1}(\mc{E}_z^i)) \cap r_F(q^{-1}(\mc{E}_x^j))$. That is, there  exist $\mu, \nu \in F^1$ such that $v = r(\mu) = r(\nu)$, $q(\mu) \in \mc{E}_z^{i} = p(\mathcal F^i_w)$ for some $w \in F^0$ with $p(w) = z$, and $q(\nu) \in \mc{E}_x^{j} = p(\mathcal F^j_u)$ for some $u \in F^0$ with $p(u) =x$. 
  As $\mu, \nu \in vF^1$, Lemma \ref{lem:p-and-q} forces $p(\mathcal F^j_u) = p(\mathcal F^i_w)$, and hence $z=x$ and $i=j$.  In other words, $ r_F(q^{-1}(\mc{E}_z^i)) \cap r(q^{-1}(\mc{E}_x^j))\not= \emptyset$ implies $z=x$ and $i=j$, so $\mathcal C$ is indeed a partition of $F^0$ as claimed. 

    For the partition $\mathcal G$ to satisfy the pairing condition, we must have that for  all $\la \in F^1$,
     $q(\la) \in \mc{G}_z^i$ if and only if $r_F(\la) \in \mc{G}_z^i$. To see this, 
    fix $\la \in F^1$, and suppose $q(\la) = e \in  E^1 \cap \mc{G}_z^{i} = \mc{E}_z^{i}$. Then 
   $r_F(\la) \in r_F(q\inv(\mc{E}_z^i)) \sse \mc{G}_z^i$, as desired. 
    Now suppose 
    $r_F(\la) = v \in F^0 \cap \mc{G}_z^{i} = 
    r_F(q\inv(\mc{E}_z^{i}))$. Thus there exists $\mu \in vF^1$ such that $q(\mu) \in \mc{E}_{z}^{i} = p(\mc{F}_u^i)$ for some $u \in F^0$ with $p(u) = z$. 
    Since $q(\mu) \in p(\mc{F}_u^i), $  Lemma \ref{lem:p-and-q}
    implies that $p(\mathcal F^i_u) \supseteq q(vF^1) \ni q(\lambda).$ As $\mathcal E^i_z = p(\mathcal F^i_u)$, we have $q(\lambda) \in \mathcal E^i_z \subseteq \mathcal G^i_z$ whenever $r_F(\lambda) \in \mathcal G^i_z.$ We conclude that  
    $\mathcal G$ satisfies the pairing condition.
   
\end{proof}

Thus, we can perform 2-graph insplitting on $\Lambda_T$ with respect to the partition $\mathcal G$.  Recall from Definitions \ref{dfn:2colgraph from T} and \ref{def:2-graph-insplit}  that the resulting 2-graph $\Lambda_I$ has $\Lambda_I^0 = \{ z^i: z \in E^0, 1 \leq i \leq m(z)\}$ and 
\[ \Lambda_I^{\varepsilon_1} = \{ e^i: e \in E^1, 1 \leq i \leq m(s(e))\}, \quad \Lambda_I^{\varepsilon_2} = \{ v^i: v \in F^0, 1 \leq i \leq m(p(v))\}, \]
with $s_I(e^i) = s(e)^i, s_I(v^i) = p(v)^i, r_I(e^i) = r(e)^j$ if $e \in \mathcal E^j_z$, and $r_I(v^i) = q(v)^k$ if $v \in r_F(q^{-1}(\mathcal E^k_z)),$ that is, if $q(vF^1) \subseteq p(\mathcal F^k_u)$ for some $u$ with $p(u) = q(v) = z.$  We have 
\[ v^j e^i \sim_I f^\ell w^k \iff i = k, e \in \mathcal E^j_{r(e)}, w \in r_F(q^{-1}(\mathcal E^\ell_{s(f)})), \text{ and } ve \sim f w \in \Lambda_T. \]
That is,  $v^j e^i \sim_I f^\ell w^i$ if and only if
\begin{itemize} 
\item there is $\lambda \in F^1$ with $r_F(\lambda) = v, s_F(\lambda) = w, p(\lambda) = e, q(\lambda) = f;$ 
\item we have $q(wF^1) \subseteq p(\mathcal F^\ell_{u})$ for some $u \in F^0$ with $p(u) = q(w)$;
\item and  $\lambda \in \mathcal F^j_v$ (so that $e = p(\lambda) \in \mathcal E^j_{r(e)} = \mc{E}^j_{p(v)}$).
\end{itemize}
In other words, each $\lambda = ve \sim_\Lambda f w \in F^1$ yields commuting squares $\{ \lambda^i:= v^j e^i \sim_I f^\ell w^i : i=1, \dots, m(s(e)) \}$  in $\Lambda_I$ (note that $m(s(e)) = m(p(w))$); the indices $j, \ell$ are determined by $\lambda$ and are the same for each $\lambda^i$.

\begin{theorem}\label{lr-insplit-4}
Under the assumptions of Theorem \ref{thm:lr-insplit=2g-insplit} above, let $\La_I$ be the 2-graph insplitting (see ~Definitions \ref{def:2-graph-insplit}) of $\La_T$ with respect to the partition {$ \{\mc{G}_z^i: z\in \Lambda^0, i= 1, \ldots, m(z)\}$} from Proposition \ref{lr-insplit-3}, and  let $T_I = (p_I, q_I: F_I \to E_I)$ be the textile system built from $\La_I$ as in  \eqref{eq:textile-from-2graph}. Then $T_I \cong \widetilde T = (\wt{p}, \wt{q}: \widetilde{F} := F_{JM} \to \widetilde{E})$, the textile system given in Proposition \ref{lr-insplit-2}. \end{theorem}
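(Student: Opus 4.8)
The plan is to produce an explicit isomorphism of textile systems, witnessing the fact that the two formulas describing $T_I$ and $\widetilde T$ in fact agree under the obvious relabelling. Indeed, $T_I$ is obtained by applying \eqref{eq:textile-from-2graph} to $\Lambda_I$, whose structure maps are recorded explicitly in the paragraph following Proposition \ref{lr-insplit-3}, while $\widetilde T$ is given by Proposition \ref{lr-insplit-2} together with the directed-graph insplit $\widetilde E$; so the task is purely to compare these two lists of formulas. I would define the candidate isomorphism to be the identity on all labels: $z^i \mapsto z^i$ and $e^i \mapsto e^i$ on the vertices and edges of the bottom graph, $v^i \mapsto v^i$ on the $F^0$-vertices, and $\lambda^i \mapsto \lambda^i$ on the $F^1$-edges.

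First I would verify this is a well-defined bijection of the four label sets. For the bottom graph the identification is immediate: the description after Proposition \ref{lr-insplit-3} shows that $E_I = (\Lambda_I^0, \Lambda_I^{\varepsilon_1}, r, s)$ is precisely the insplit of $E$ (Definition \ref{def:graph insplitting}) with respect to the partition $\mathcal E$, so $E_I = \widetilde E$ as directed graphs and the $E$-part of the isomorphism is literally the identity. For the top graph, $F_I^0 = \Lambda_I^{\varepsilon_2} = \{ v^i : 1 \le i \le m(p(v))\}$ agrees with $\widetilde F^0 = F_{JM}^0 = \{ v^i : 1 \le i \le m(v)\}$ once we invoke Remark \ref{rmk:same-index}, which gives $m(p(v)) = m(v)$. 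For the edges, the commuting squares $\lambda^i$ of $\Lambda_I$ run over $1 \le i \le m(s(e))$ with $e = p(\lambda)$; since $p$ is a graph homomorphism, $s(e) = p(s_F(\lambda))$, and hence $m(s(e)) = m(p(s_F(\lambda))) = m(s_F(\lambda))$ by Remark \ref{rmk:same-index}, matching the range of the index on the Johnson--Madden edges $\lambda^i \in F_{JM}^1$. Thus $\lambda^i \mapsto \lambda^i$ is a bijection $F_I^1 \to \widetilde F^1$.

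The substantive step is checking that this relabelling intertwines range, source, $p$, and $q$. The ``untwisted'' maps agree by inspection: $s_{F_I}(\lambda^i) = s_F(\lambda)^i = s_{F_{JM}}(\lambda^i)$, $p_I(\lambda^i) = p(\lambda)^i = \widetilde p(\lambda^i)$, and $p_I(v^i) = p(v)^i = \widetilde p(v^i)$. Care is needed for the ``twisted'' superscripts appearing in the range map of $F$ and in $q$. For the range, $r_{F_I}(\lambda^i) = r_F(\lambda)^j$ with $\lambda \in \mathcal F^j_{r_F(\lambda)}$, which is exactly $r_{F_{JM}}(\lambda^i)$ from Definition \ref{def:graph insplitting}. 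For $q$, one must match $q_I(\lambda^i) = q(\lambda)^\ell$, where $\ell$ is determined by $q(s_F(\lambda) F^1) \subseteq p(\mathcal F^\ell_u)$ for some $u$, against $\widetilde q(\lambda^i) = q(\lambda)^j$, where $j$ is determined by the same containment from Proposition \ref{lr-insplit-2}; the equality $\ell = j$ holds because this index is independent of the choices, which is precisely the content of Condition \eqref{lr-is-hyp-1} of Theorem \ref{thm:lr-insplit=2g-insplit}, Remark \ref{rmk:same-index}, and Lemma \ref{lem:p-and-q}. The analogous identity on vertices, $q_I(v^i) = q(v)^k = \widetilde q(v^i)$, follows from the defining description that $v \in r_F(q^{-1}(\mathcal E^k_z))$ if and only if $q(vF^1) \subseteq p(\mathcal F^k_u)$, which is how $\mathcal G^k_z$ was built in Proposition \ref{lr-insplit-3}.

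The main obstacle is not a hard estimate but the bookkeeping among three interlocking families of superscripts --- those coming from $\mathcal F$ on $F$, from $\mathcal E = p(\mathcal F)$ on $E$, and from $\mathcal G = r_F(q^{-1}(\mathcal E)) \sqcup \mathcal E$ on $\Lambda^1$ --- together with the verification, via the LR hypothesis and Lemma \ref{lem:p-and-q}, that each superscript is independent of the choices made in its definition. Once these identifications are pinned down, assembling them into graph isomorphisms $\phi_E \colon E_I \to \widetilde E$ and $\phi_F \colon F_I \to \widetilde F$ satisfying $\widetilde p \circ \phi_F = \phi_E \circ p_I$ and $\widetilde q \circ \phi_F = \phi_E \circ q_I$ is routine and yields the desired isomorphism $T_I \cong \widetilde T$.
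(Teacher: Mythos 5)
Your proposal is correct and follows essentially the same route as the paper's own proof: both unpack the explicit formulas for $T_I$ (from applying \eqref{eq:textile-from-2graph} to $\Lambda_I$) and for $\widetilde T$ (from Proposition \ref{lr-insplit-2}), then match them label-by-label, using Remark \ref{rmk:same-index} to reconcile the index ranges and Lemma \ref{lem:p-and-q} to pin down the superscript in the $q$-maps. The only cosmetic difference is that you phrase the conclusion as an isomorphism given by the identity relabelling, whereas the paper asserts the equality $T_I = \widetilde T$ outright.
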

\begin{proof}
By  \eqref{eq:textile-from-2graph} and Remark \ref{rmk:same-index}, the  LR textile system $T_{I} = (p_I, q_I: F_{I} \to E_{I})$ has 
\[ F_I^0 = \{ v^i: v \in F^0, 1 \leq i \leq m (p(v))\}, \ F_I^1 = \{ \lambda^i: \lambda \in F^1, 1 \leq i \leq m(s(p(\lambda)))\}, \]
$s_{F_1}(\lambda^i) = s_F(\lambda)^i$, and $ \ r_{F_I}(\lambda^i) = r_F(\lambda)^j \text{ if } \lambda \in \mathcal F^j_v.$
When we compare $F_I$ with the construction of $F_{JM}$ as in Definition \ref{dfn:textile-insplit}, the fact that $m(p(v)) = m(v)$ for all $v \in F^0$ implies that $F_I = F_{JM}.$ Similarly, we observe that the definitions of $\widetilde E, \wt p, \wt q$ in Proposition \ref{lr-insplit-2} exactly match the definitions of $E_I, p_I, q_I$ from Equation \eqref{eq:textile-from-2graph}. 
The fact that $\widetilde E = E_I$ follows from the fact that the insplitting partition $\mathcal E$ used to create $\widetilde E$ satisfies $\mathcal E^i_z = \Lambda^{\varepsilon_1}_I \cap \mathcal G^i_z ,$ and the source and range maps in $\widetilde E$ coincide with the 2-graph insplitting source and range maps. Moreover,  $\wt p = p_I$ and $\wt q = q_I$, since for any $\lambda^i = r_F(\lambda)^k p(\lambda)^i \sim_I q(\lambda)^\ell  s_F(\lambda)^i \in F_I^1$ and any $v^i \in F_I^0 = \Lambda_I^{\varepsilon_2},$

\[ p_I(\lambda^i) = p(\lambda)^i, \quad p_I(v^i) = s_{\Lambda_I}(v^i) = p(v)^i, \quad q_I(v^i) = r_{\Lambda_I}(v^i) = q(v)^k \]
if $v \in \mc{G}^k_{q(v)}$, or equivalently, $v \in r_F(q^{-1}(\mathcal E^k_{q(v)}).$  That is, $q_I(v^i) = \widetilde q(v^i).$
Finally, 
to compare $q_I(\lambda^i)$ and $\wt q(\lambda^i)$, recall that  $\lambda^i = r_F(\lambda)^k p(\lambda)^i \sim_I q(\lambda)^\ell  s_F(\lambda)^i \in F_I^1$ where $p(\lambda) \in \mc E^k_{p(r_F(\lambda)))}$ and $s_F(\lambda) \in \mc G^\ell_{q(s_F(\lambda))}$. By construction, then, $q_I(\lambda^i) = q(\lambda)^\ell$.  On the other hand, $s_F(\lambda) \in \mc G^\ell_{q(s_F(\lambda))}$ implies that $s_F(\lambda) \in r_F(q^{-1}(\mc E^\ell_z))$, so (thanks to Lemma \ref{lem:p-and-q}) $q(s_F(\lambda) F^1) \subseteq p(\mathcal F^\ell_w)$ for some $w \in F^0$.
Consequently, $\wt q(\lambda^i) = q(\lambda)^\ell = q_I(\lambda^i)$ as desired. 

   In other words, $T_I = \widetilde T;$ since $T_I$ is LR, being a textile system arising from a 2-graph, it follows that $\widetilde T$ is also LR.
\end{proof}
The above Theorem completes the proof of Theorem \ref{thm:lr-insplit=2g-insplit}.

\begin{example}
Consider again the LR textile system $T= (p, q: F\to E)$ in Example~\ref{eq:the example} and its corresponding $2$-graph $\Lambda$, pictured in \eqref{ex:6.1 EF+}. Note that for each $f_k\in F^0$, $k= 1, 2, 3$, the  partition $\mc{F}_{f_k}= \{\mc{F}^i_{f_k}: i \leq 2\}$ 
of Equation \eqref{eq:F-A-partition}

satisfies  the conditions given in Theorem~\ref{thm:lr-insplit=2g-insplit}.
That is, the graph $F_A$ of  Example \ref{eq:the example} is the graph $F_{JM}$ of Theorem \ref{thm:lr-insplit=2g-insplit}. 

Observe that 
\[ q(f_1 F^1) = \{ e_1\} = p(\mathcal F^1_{f_1}) = p(\mathcal F^1_{f_2}); \quad q(f_2F^1) = \{ e_2\} = p(\mathcal F^1_{f_3}); \text{ and} \quad q(f_3 F^1) = \{ e_3\} = p(\mathcal F^2_{f_3}).\]
As $p(f_1) = p(f_2) = v$ and $q(f_3) = w,$ we have 
\[ \mathcal E^1_{v} = p(\mathcal F^1_{f_1}) = \{e_1\} = p(\mathcal F^1_{f_2}), \qquad  \mathcal E^1_{w} = p(\mathcal F^1_{f_3}) = \{e_2\}, \quad \text{ and } \quad \mathcal E^2_w = \{ e_3\}.\]
It follows that 

the directed graph insplitting $\widetilde E$ of $E$ with this partition is precisely the directed graph $E_{\Lambda_I}$ shown in Example \ref{eq:the example}.

Thus, applying the formulas from Proposition \ref{lr-insplit-2} in this case, we obtain maps $\wt p, \wt q: F_{JM} = F_A \to \widetilde E = E_{\Lambda_I}$ which are given by 
\[ \wt q(\lambda_1^1) = e_1^1, \quad \wt q(\lambda_2^1) = e_2^1, \quad \wt q(\lambda_3^1) = e_3^1, \quad \wt q(\lambda_4^1) = e_3^2 = \wt q(\lambda_4^2),\]
and $\wt p(\lambda_j^i) = p(\lambda_j)^i$ for all $i, j$.
These formulas agree with the commuting squares of $\Lambda_I$, as asserted by Theorem \ref{lr-insplit-4}.  Indeed, note that 
\[ q^{-1}(\mathcal E^1_v) = \{ \lambda_1\}, \quad q^{-1}( \mathcal E^1_w) = \{ \lambda_2\}, \quad q^{-1}(\mathcal E^2_w) = \{ \lambda_3, \lambda_4\},\]
so 
the partition $\mathcal G$ of Proposition \ref{lr-insplit-3} is given by 
\[ \mathcal G^1_v = \{ f_1, e_1\}, \quad \mathcal G^1_w = \{ f_2, e_2\}, \quad \mathcal G^2_w = \{ f_3, e_3\}.\]
In other words, the 2-graph insplit induced by the partition $\mathcal F$ of Equation \eqref{eq:F-A-partition} is precisely the initial 2-graph insplit of Equation \eqref{eq:2graph-insplit}.

\end{example}

\subsection{Textile and 2-graph insplits from \textit{E}-insplits}

\noindent Although Johnson--Madden insplitting focuses on the top graph $F$ of a textile system, to understand the connection between Johnson--Madden insplitting and 2-graph insplitting, it is perhaps more natural to start by insplitting the bottom graph $E$, as we now explain. This leads us to an alternate perspective to Theorem \ref{thm:lr-insplit=2g-insplit}. 

Let $T = (p,q:F\to E)$ be an LR textile system and let $\Lambda_T$ denote the 2-graph associated to $T$. The following Theorem shows that by starting with a  suitable partition of 
$E$, one can construct an LR textile system $\widetilde{T}$ 
which yields a 2-graph $\Lambda_I$ coinciding with the 2-graph insplitting of $\Lambda_T$.

\begin{theorem} \label{Thm: Main result III}
Let $T = (p, q: F \to E)$ be an LR textile system with associated 2-graph $\Lambda$. 
Assume $p$ is surjective and $F$ is source free.
Suppose that, 
for each $z \in E^0$, 
we have a partition  $\{ \E^1_z, \E^2_z, \ldots, \E^{m(z)}_z\}$ of $zE^1$ such that 
for each $u \in F^0$, there exists  $ z \in E^0$ and  $j \in \{ 1, 2, \ldots, m(z)\}$ such that $q(uF^1) \subseteq \E^j_z$.

We construct a partition $\{ \mathcal F^i_v: v \in F^0, 1\leq i \leq m(p(v))\}$ of $F^1$ and a partition $\{ \G^i_z: z \in \Lambda^0 = E^0, 1 \leq i \leq m(z)\}$ of $\Lambda^{1}$  as follows. For each  $v \in F^0$ and $ z \in E^0$, define
\begin{equation}
\mathcal{F}^i_v := vF^1 \cap p^{-1}(\E^i_{p(v)}), \;\;\; \text{ for } 1\le i\le m(p(v)),
\end{equation}
\begin{equation}
\mathcal{G}^i_z := \E^i_z \sqcup r_F(q^{-1}(\E_z^i)), \;\;\; \text{ for } 1\le i\le m(z).
\end{equation}

Let $\widetilde{F}$ denote the directed graph resulting from directed-graph insplitting  $F$ using the partition $\{\mathcal{F}^i_v\}_{i,v}$.
Let $\widetilde{E}$ denote the directed graph resulting from directed-graph insplitting  $E$ using the partition $\{\mathcal{E}^i_z\}_{i,z}$.
Define maps $\tilde{p}, \tilde{q}: \widetilde{F} \to \widetilde{E}$ by 
\begin{eqnarray*}
\tilde{p}(v^i) :=  p(v)^i, &  \tilde{q}(v^i)  :=  q(v)^j, \\
 \tilde{p}(\lambda^i) :=  p(\lambda)^i, & \tilde{q}(\lambda^i) :=  q(\lambda)^j, 
\end{eqnarray*}
where $\lambda \in F^1$, $v \in F^0$, $s(\lambda) = v$ and $j$ 
satisfies $q(vF^1) \subseteq \E^j_{q(v)}$. Then 
\begin{enumerate}
\item The maps $\tilde{p}, \tilde{q}:  \widetilde{F}  \to  \widetilde{E}$ are 
graph homomorphisms and $\widetilde{T} := (\tilde{p}, \tilde{q} : \wt{F} \to \wt{E} )$ is an LR textile system.
\item The collection $\{ \mathcal{G}^i_z := \E^i_z \sqcup r(q^{-1}(\E_z^i)) : z \in E^0, 1\le i \le m(z) \}$ satisfies the pairing condition. 
\item The 2-graph associated to $\widetilde{T}$ (denoted by $\widetilde{\Lambda}$) is identical to the 2-graph ${\Lambda}_I$ resulting from insplitting $\Lambda$ using the partition 
$\{ \G^j_z\}_{j,z}.$ 
\end{enumerate}

\end{theorem}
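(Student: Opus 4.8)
The plan is to deduce all three conclusions from Theorem \ref{thm:lr-insplit=2g-insplit} by verifying that the partition $\mathcal{F} = \{\mathcal{F}^i_v\}$ built here from the $E$-partition $\{\mathcal{E}^i_z\}$ satisfies the hypotheses of that theorem, and then checking that the auxiliary data $\widetilde E$, $\mathcal{G}$, $\tilde p$, $\tilde q$, $\widetilde T$ produced here coincide with those appearing in Propositions \ref{lr-insplit-2} and \ref{lr-insplit-3} and Theorem \ref{lr-insplit-4}. The linchpin of this reduction is the identity
\[ p(\mathcal{F}^i_v) = \mathcal{E}^i_{p(v)} \qquad \text{for all } v \in F^0, \ 1 \le i \le m(p(v)), \]
which I would establish first. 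The inclusion $\subseteq$ is immediate from the definition $\mathcal{F}^i_v = vF^1 \cap p^{-1}(\mathcal{E}^i_{p(v)})$, while $\supseteq$ uses that $T$ is LR: given $e \in \mathcal{E}^i_{p(v)} \subseteq p(v)E^1$ we have $r(e) = p(v)$, so the unique $r$-path lifting of $p$ (Definitions \ref{dfn:resolving}) produces $\lambda \in \mathcal{F}^i_v$ with $p(\lambda) = e$. The same argument shows each $\mathcal{F}^i_v$ is nonempty; and since $\{\mathcal{E}^i_{p(v)}\}_i$ partitions $p(v)E^1 \supseteq p(vF^1)$, the collection $\{\mathcal{F}^i_v\}_i$ is a genuine insplitting partition of $vF^1$ in the sense of Definition \ref{def:graph insplitting}.

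With this identity in hand I would check hypotheses (\ref{lr-is-hyp-1}) and (\ref{lr-is-hyp-2}) of Theorem \ref{thm:lr-insplit=2g-insplit}. For (\ref{lr-is-hyp-1}): if $p(\mathcal{F}^i_v) \cap p(\mathcal{F}^j_w) = \mathcal{E}^i_{p(v)} \cap \mathcal{E}^j_{p(w)} \ne \emptyset$, then because $\mathcal{E}^i_{p(v)}$ and $\mathcal{E}^j_{p(w)}$ consist of edges with range $p(v)$ and $p(w)$ respectively, nonempty intersection forces $p(v) = p(w) =: z$; as $\{\mathcal{E}^k_z\}_k$ is a partition we get $i = j$, whence $p(\mathcal{F}^i_v) = p(\mathcal{F}^j_w)$. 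For (\ref{lr-is-hyp-2}): fix $v \in F^0$; since $F$ is source-free, $vF^1 \ne \emptyset$, so the hypothesis of the present theorem gives $j$ and $z$ with $q(vF^1) \subseteq \mathcal{E}^j_z$, and (because $q(vF^1) \subseteq q(v)E^1$ and is nonempty) we must have $z = q(v)$. Using surjectivity of $p$, choose $\lambda \in F^1$ with $p(\lambda) \in \mathcal{E}^j_{q(v)}$ and set $w = r(\lambda)$; then $p(w) = q(v)$ and $p(\mathcal{F}^j_w) = \mathcal{E}^j_{q(v)} \supseteq q(vF^1)$, as required.

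Finally I would reconcile the constructions. The partition $\mathcal{E}$ of Proposition \ref{lr-insplit-2} is defined by $\mathcal{E}^i_z = p(\mathcal{F}^i_v)$ with $p(v) = z$, which by the identity above equals the $\mathcal{E}^i_z$ we started from; consequently $\widetilde E$ and the partition $\mathcal{G}$ of Proposition \ref{lr-insplit-3} agree with those defined here. The one point requiring care — and the part I expect to be the main (if modest) obstacle — is matching the index $j$ in the definition of $\tilde q$: here $j$ is singled out by $q(vF^1) \subseteq \mathcal{E}^j_{q(v)}$, whereas Proposition \ref{lr-insplit-2} selects $j$ by $q(vF^1) \subseteq p(\mathcal{F}^j_w)$, and one must confirm these determine the same index. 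This follows because $p(\mathcal{F}^j_w) = \mathcal{E}^j_{p(w)}$ and, as noted, $p(w) = q(v)$, so both conditions read $q(vF^1) \subseteq \mathcal{E}^j_{q(v)}$; hence $\tilde p = \wt p$, $\tilde q = \wt q$, and $\widetilde T$ coincides with the textile system of Proposition \ref{lr-insplit-2}. Conclusion (1) is then exactly Proposition \ref{lr-insplit-2} together with the LR conclusion of Theorem \ref{thm:lr-insplit=2g-insplit} (established via Theorem \ref{lr-insplit-4}); conclusion (2) is Proposition \ref{lr-insplit-3}; and conclusion (3) is Theorem \ref{lr-insplit-4}. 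Alternatively, one could prove all three parts directly, mirroring line-by-line the graph-homomorphism, LR, pairing-condition, and identification arguments of Propositions \ref{lr-insplit-2} and \ref{lr-insplit-3} and Theorem \ref{lr-insplit-4}, but the reduction above avoids repeating that bookkeeping.
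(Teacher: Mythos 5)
Your proposal is correct and takes essentially the same route as the paper's own proof: both first establish the key identity $p(\mathcal{F}^i_v) = \mathcal{E}^i_{p(v)}$ via unique $r$-path lifting, use it (together with source-freeness of $F$ and surjectivity of $p$) to verify Hypotheses (\ref{lr-is-hyp-1}) and (\ref{lr-is-hyp-2}) of Theorem \ref{thm:lr-insplit=2g-insplit}, and then identify $\wt{E}$, $\mathcal{G}$, $\tilde{p}$, $\tilde{q}$, $\wt{T}$ with the objects of Propositions \ref{lr-insplit-2}, \ref{lr-insplit-3} and Theorem \ref{lr-insplit-4}, from which the three conclusions follow. Your index-matching argument for $\tilde{q}$ and your spelled-out checks of the two hypotheses are exactly the (partly implicit) steps in the paper's proof, so there is no gap and no genuinely different method to compare.
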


\begin{proof}
We begin by observing  that if $q(uF^1) \subseteq \mathcal E^j_z$, then  $z = q(u)$ and $j$ is unique, since $q$ is a graph homomorphism and  $\{ \mathcal E^j_z\}_j$ is a partition of $z E^1$.  Thus, $\tilde q$ is well defined.

Next, we show that $\{ \mathcal F_v^i: v \in F^0, 1 \leq i \leq m(p(v)) \}$ is a well-defined partition of $F^1$.  First, note that if $v \in F^0$ and $p(v) = z$, then 
\begin{equation}
vF^1 \cap p^{-1}(\E^i_z) \ne \emptyset \text{ for all } i \in \{ 1, 2, \ldots, m(z)\}.
\end{equation}
This is because there exists some $e \in \mathcal{E}^i_z$ (partition sets are nonempty by convention), and since $p(v) = z = r(e)$, unique $r$-path lifting  of $p$ implies that there exists a unique $\lambda \in F^1$ such that $r_F(\lambda) = v$ and $p(\lambda) = e$. Hence, $\lambda \in vF^1 \cap p^{-1}(\E^i_z)$.
It follows that $\mathcal{F}^i_v \ne \emptyset$ for all $v \in F^0$ and all $ 1 \le i \le m(p(v))$.
Consequently,  $\{ \mathcal F_v^i: v \in F^0, 1 \leq i \leq m(p(v)) \}$ is a well-defined partition of $F^1$: 
\[ \displaystyle \bigcup_i \mathcal{F}^i_v = vF^1 \cap p^{-1} (\bigcup_i \mathcal{E}^i_{p(v)}) = vF^1 \cap p^{-1}(p(v) E^1) = vF^1,\]
and
$ \F^k_u \cap \F^l_v \ne \emptyset \implies \left( v = u \text{ and } \E^k_{p(u)} \cap \E^l_{p(v)} \ne \emptyset\right)  \implies v= u \text{ and } k=l.$

In fact, the partition $\{ \mathcal F^i_v\}_{i,v}$ satisfies Hypotheses (1) and (2) of Theorem \ref{thm:lr-insplit=2g-insplit}.  To see this, first observe  that the sets $\mathcal E^i_z$ and $\mathcal F^i_w$  of the current Theorem satisfy $\mathcal E^i_z = p(\mathcal F^i_v)$ for any $v$ with $p(v) = z$.  (The inclusion $\supseteq$ is immediate; to see equality, choose $e \in \mathcal E^i_z$ and $v \in F^0$ with $p(v) = z$.  Then $r$-path lifting yields $f \in F^1$ with $r(f) = v$ and $p(f) = e$, that is, $f \in p^{-1} (\mathcal E^i_{p(v)}) \cap v F^1 = \mathcal F^i_v$ and $e = p(f)$.)  Thus, (1) holds since $\{ \mathcal E^i_z\}_{i,z}$ is a partition of $E^1$, and (2) holds by our hypothesis on the partition $\{ \mathcal E^i_z\}_{i,z}$.

Consequently, the graph $\wt E$ of the current theorem is the same as the graph $\wt E$ of Theorem \ref{thm:lr-insplit=2g-insplit}, and our $\wt F$ is the graph $F_{JM}$ of Theorem \ref{thm:lr-insplit=2g-insplit}.  Moreover, the maps $\tilde{p}, \tilde{q}:  \wt{F}  \to  \wt{E}$ defined in the statement of the current theorem are precisely the maps discussed in Proposition \ref{lr-insplit-2}.  Hence, that proposition establishes that $\tilde{p}, \tilde{q}:  \wt{F}  \to  \wt{E}$  are well-defined graph homomorphisms.  The fact that $\wt{T} := (\tilde{p}, \tilde{q} : \wt{F} \to \wt{E} )$ is an LR textile system follows from the identification $\wt T = T_I$ of Theorem \ref{lr-insplit-4}.

Finally, observe that the sets $\mathcal G^i_z$ of the current theorem are precisely the same as those of Proposition \ref{lr-insplit-3}, so that proposition yields statement (2) of the theorem; and the proof of Theorem \ref{lr-insplit-4}  yields statement (3).
 \qedhere 

\end{proof}

\subsection{Uniting the approaches} \label{subsec: unification}

As mentioned at the beginning of this Section, the relationships between 2-graph insplitting and Johnson--Madden insplitting established in Theorems \ref{thm:Priyanga}, \ref{thm:lr-insplit=2g-insplit}, and \ref{Thm: Main result III} are compatible.  That is,  the coherent choice of notation in Theorems \ref{thm:Priyanga}, \ref{thm:lr-insplit=2g-insplit}, and \ref{Thm: Main result III} was neither accidental nor an abuse of notation, because all the constructions in these results are equivalent.
To obtain a clean statement of the equivalences, we assume throughout this section that the top graph $F$ is source-free and the graph homomorphism $p$ is surjective, as done in Theorem \ref{thm:lr-insplit=2g-insplit}. 

 \begin{remark}
 While it is sufficient to assume that $p$ is surjective and $F$ is source-free, we note that the standard dynamical assumption of an essential system (essential textile system $T$ or equivalently essential 2-graph $\Lambda_T$) also implies all the results in this section.      
 \end{remark}

We begin by showing that the setup in Theorem \ref{thm:Priyanga} leads to the results in Theorem \ref{thm:lr-insplit=2g-insplit}.

\begin{theorem}[Theorem \ref{thm:Priyanga} $\longrightarrow$ Theorem \ref{thm:lr-insplit=2g-insplit}]
Let $T = (p, q: F \to E)$ be an LR textile system and let $\Lambda$ be its associated 2-graph. Further, assume that $p$ is surjective and $F$ is source free.
Let $\{ \mathcal{G}^i_z:  z \in E^0, 1 \le i \le m(z)\}$ be a 2-graph insplitting partition of $\Lambda$. Then, 
\begin{enumerate}

\item The partition $\{\mathcal{F}^i_v: v \in F^0, 1 \le i \le m(v)\}$ of $F^1$ defined in Theorem \ref{thm:Priyanga} as 
\begin{equation}
\mathcal{F}^i_v := \{ \lambda \in F^1: r(\lambda) = v,\;\; p(\lambda) \in \mathcal{G}^i_{r(p(\lambda)) = p(v)}  \} 
\end{equation}
satisfies the hypotheses of Theorem \ref{thm:lr-insplit=2g-insplit}, namely:
\begin{enumerate}[label = (\alph*)]
\item If $v, w \in F^0$ such that $p(\F^i_v) \cap p(\F^j_w) \ne \emptyset $, then $p(\F^i_v) = p(\F^j_w)$; and 
\item For each $v \in F^0$, there exists $w \in F^0$ and $1 \le j \le m(w)$ such that $q(vF^1) \subseteq p(\F^j_w)$.
\end{enumerate}

\item The partition $\{\mathcal{E}^i_z: z \in E^0, 1 \le i \le m(z)\}$ of $E^1$ defined in Theorem \ref{thm:Priyanga} by 
\begin{equation} 
\mathcal{E}^i_z := \mathcal{G}^i_z \cap z E^1
\label{def:E from G}
\end{equation}
satisfies $\mathcal{E}^i_z = p(\mathcal{F}^i_v)$ for any $v\in F^0$ such that $z= p(v)$, and hence agrees with the corresponding partition of $E^1$ in Theorem \ref{thm:lr-insplit=2g-insplit}.

\item The original 2-graph partition $\{ \mathcal G^i_z: z \in E^0, 1 \leq i \leq m(z)\}$ satisfies
\begin{equation}
\G^i_z = \mathcal E^i_z \sqcup r_F(q^{-1}(\mathcal E^i_z))
\end{equation}
for all $v$ such that  $z = p(v) \in E^0$ and for all $1\le i\le m(v)$. 
In particular, the 2-graph insplitting partition constructed in Theorem \ref{thm:lr-insplit=2g-insplit}  agrees with the original partition $\{\mathcal{G}^i_z: z\in E^0, 1\leq i\leq m(z) \}$.

\item The constructions of the textile system $\wt{T} = (\wt{p}, \wt{q}: \wt{F}\to \wt{E})$ in Theorems \ref{thm:Priyanga} and \ref{thm:lr-insplit=2g-insplit} coincide. 
\end{enumerate}

\end{theorem}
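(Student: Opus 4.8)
The plan is to build everything on a single computation, the identity $p(\mathcal{F}^i_v) = \mathcal{E}^i_{p(v)}$, and then to exploit the pairing condition for the commuting squares coming from $F^1$. First I would record that, because $T$ is LR, $p$ has unique $r$-path lifting; hence for fixed $v \in F^0$ and each $e \in (p(v))E^1$ there is a unique $\lambda \in F^1$ with $r_F(\lambda) = v$ and $p(\lambda) = e$. Feeding $e \in \mathcal{G}^i_{p(v)}$ into this shows $p(\mathcal{F}^i_v) = \mathcal{G}^i_{p(v)} \cap (p(v))E^1 = \mathcal{E}^i_{p(v)}$, which is exactly statement (2). Statement (1)(a) is then immediate: $p(\mathcal{F}^i_v) \cap p(\mathcal{F}^j_w) = \mathcal{E}^i_{p(v)} \cap \mathcal{E}^j_{p(w)}$, and any $e$ in this intersection forces $p(v) = r_E(e) = p(w)$ and, since $\{\mathcal{G}^k_{p(v)}\}_k$ partitions $(p(v))\Lambda^1$, also $i = j$, whence the two images coincide.

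The heart of the argument is the pairing condition read off the squares associated to edges of $F$. For $\lambda \in F^1 = \Lambda^{\varepsilon_1+\varepsilon_2}$ we have $\lambda = r_F(\lambda)\, p(\lambda) \sim q(\lambda)\, s_F(\lambda)$, so $\lambda(0,\varepsilon_1) = q(\lambda)$ and $\lambda(0,\varepsilon_2) = r_F(\lambda)$; by the pairing condition these lie in a common set $\mathcal{G}^j_{q(r_F(\lambda))}$. In particular, for every $v \in F^0$ and every $\lambda \in vF^1$, the edge $q(\lambda)$ sits in the same partition set as $v$. Since $F$ is source-free, $vF^1 \ne \emptyset$, so this singles out one index $j$ (the one with $v \in \mathcal{G}^j_{q(v)}$) for which $q(vF^1) \subseteq \mathcal{G}^j_{q(v)} \cap (q(v))E^1 = \mathcal{E}^j_{q(v)}$. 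To finish (1)(b) I must produce a vertex $w$ with $p(w) = q(v)$ so that $\mathcal{E}^j_{q(v)} = p(\mathcal{F}^j_w)$: picking any $\lambda \in vF^1$ and lifting the edge $q(\lambda)$ through surjectivity of $p$ gives $\mu \in F^1$ with $p(\mu) = q(\lambda)$, and $w := r_F(\mu)$ works since $p(w) = r_E(q(\lambda)) = q(v)$. The same pairing computation yields (3): the color-$1$ part of $\mathcal{G}^i_z$ is $\mathcal{E}^i_z$ by definition, while for the color-$2$ part one checks $w \in \mathcal{G}^i_z \cap F^0 \iff w = r_F(\lambda)$ for some $\lambda$ with $q(\lambda) \in \mathcal{E}^i_z$, using source-freeness and pairing in one direction, and pairing together with $r_E(q(\lambda)) = z$ in the other; thus $\mathcal{G}^i_z = \mathcal{E}^i_z \sqcup r_F(q^{-1}(\mathcal{E}^i_z))$, matching Proposition \ref{lr-insplit-3}.

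With (1)--(3) in hand, statement (4) is an assembly step. The partitions $\mathcal{F}$ agree by construction, and Remark \ref{rmk:same-index} (via Proposition \ref{prop:lr-insplit-1'}) guarantees $m(v) = m(p(v))$, so the two copies of $\widetilde{F}$ have identical index ranges; statement (2) makes the two insplitting partitions $\mathcal{E}$ of $E$ agree, so the two copies of $\widetilde{E}$ coincide. The maps $\widetilde{p}$ are visibly equal. For $\widetilde{q}$ the only point to verify is that the two recipes for the superscript agree: Theorem \ref{thm:Priyanga} selects $m$ with $s_F(\lambda) \in \mathcal{G}^m_{q(s_F(\lambda))}$, while Proposition \ref{lr-insplit-2} selects $j$ with $q(s_F(\lambda)F^1) \subseteq p(\mathcal{F}^j_w)$; the analysis of (1)(b) shows these two indices are one and the same. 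I expect (1)(b) and the color-$2$ half of (3) to be the main obstacle, since they are where the pairing condition, source-freeness, and surjectivity of $p$ all have to be combined and where the witness vertex $w$ must be produced; the index bookkeeping in (4) is the only other place demanding care.
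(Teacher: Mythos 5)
Your proof is correct and takes essentially the same route as the paper's: both rest on the key identity $p(\mathcal{F}^i_v) = \mathcal{G}^i_{p(v)} \cap E^1$ (proved via unique $r$-path lifting), from which (2) and (1a) follow immediately, then combine the pairing condition with source-freeness of $F$ for (1b) and (3), and assemble (4) from the agreement of the partitions and the $\wt{p},\wt{q}$ formulas. The only cosmetic differences are that you produce the witness vertex $w$ in (1b) by lifting $q(\lambda)$ through edge-surjectivity of $p$ rather than invoking vertex-surjectivity directly, and that you spell out why the two superscript recipes for $\wt{q}$ select the same index in (4), a point the paper leaves implicit.
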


\begin{proof}

We first observe that for any $v \in F^0$ and $1 \leq i \leq m(p(v)),$ 
\begin{equation}
    p(\mathcal F^i_v) = \G^i_{p(v)} \cap E^1.
    \label{eq: new defn F^i_v}
\end{equation}
The containment $\subseteq$ follows from the definition, and the reverse containment follows from  $r$-path lifting for $p$.  
It immediately follows that (2) holds.
Furthermore, $p(\mathcal F^i_v) = p(\mathcal F^i_w)$
 whenever $p(v) = p(w).$
 
To see (1a), recall from Theorem \ref{thm:Priyanga} that $\{ \mathcal E^i_z\}_{i,z}$ is a partition of $E^1$.  Thus, (1a) follows from Equation \eqref{eq: new defn F^i_v}.

For (1b), suppose $\lambda \in vF^1$ has $q(\lambda)\in p(\mathcal F^j_w) = \mathcal E^j_{p(w)} \subseteq \G^j_{p(w)}$.  (For each $v$, such a $\lambda$ exists since $F$ is source free.) As $\{ \mathcal G^j_z\}_{j,z}$ satisfies the pairing condition, we have $v \in \G^j_z$ as well.  Thus, the pairing condition, together with the fact that $p(\mathcal F^i_u) = p(\mathcal F^i_w)$ whenever $p(u) = p(w)$, implies that for any $\mu \in vF^1$, $q(\mu) \in \G^j_z \cap E^1 = p(\mathcal F^j_w).$  That is, (1b) holds.

 For  (3), Equation  \eqref{def:E from G} implies that $\G^i_z \cap E^1 = \E^i_z$. To see that $\G^i_z \cap F^0 = r_F(q^{-1}(\E^i_z))$, we use  the fact that $F$ is source free, the pairing condition, and the definition of $r_\Lambda$.  From these, we conclude that $w \in F^0 \cap \G^i_z$ if and only if $q(w) = r_\Lambda(w) = z$ and there is $\lambda \in F^1$ with $r_F(\lambda) = w$ and $q(\lambda) \in \G^i_z \cap E^1 = \E^i_z .$  That is, $w \in F^0 \cap \G^i_z$ if and only if $w \in r_F(q^{-1}(\E^i_z))$, so (3) holds.

 As we have now proved that the partitions $\mathcal E$ and $\mathcal F$ constructed in Theorem \ref{thm:Priyanga} have the same properties as the partitions of Theorem \ref{thm:lr-insplit=2g-insplit}, and the formulas for  $\tilde p, \tilde q$ are the same in both Theorems, (4) holds.

\end{proof}

We next show that starting from the setting in Theorem \ref{thm:lr-insplit=2g-insplit}, one can obtain the results in Theorem \ref{Thm: Main result III}.

\begin{theorem}[Theorem \ref{thm:lr-insplit=2g-insplit} $\longrightarrow$ Theorem \ref{Thm: Main result III}]

Let $T= (p,q: F \to E)$ be an LR textile system with associated 2-graph $\Lambda$. Further, assume that $p$ is surjective and $F$ is source free.
Let $\{\mathcal{F}^i_v: v \in F^0, 1 \le i \le m(v)\}$ be a partition of $F^1$ as in Theorem \ref{thm:lr-insplit=2g-insplit}, namely:
\begin{enumerate}[label = (\alph*)]
\item If $v, w \in F^0$ such that $p(\F^i_v) \cap p(\F^j_w) \ne \emptyset $, then $p(\F^i_v) = p(\F^j_w)$; and 
\item For each $v \in F^0$, there exists $w \in F^0$ and $1 \le j \le m(w)$ such that $q(vF^1) \subseteq p(\F^j_w)$.
\end{enumerate}
\noindent Then, 
\begin{enumerate}

\item The partition $\{\mathcal{E}^i_z: z \in E^0, 1 \le i \le m(v)\}$ of $E^1$
defined in Theorem \ref{thm:lr-insplit=2g-insplit} as $\mathcal{E}^i_z := p(\F^i_v)$, where $p(v) = z$, 
satisfies the hypothesis of Theorem \ref{Thm: Main result III}, that is,

For each $u \in F^0$, there is a unique $j \in \{ 1, 2, \ldots, m(q(u))\}$ such that $q(uF^1) \subseteq \E^j_{q(u)}$.

\item The partition $\{\mathcal{F}^i_v: v \in F^0, 1 \le i \le m(v)\}$ of $F^1$ constructed from $\{ \E^i_z\}_{i,z}$ in Theorem \ref{Thm: Main result III} 
agrees with the original partition $\{\mathcal{F}^i_v \}$.
\end{enumerate}
Consequently, the 2-graph insplitting partitions and the textile systems $\wt T$ constructed in the two Theorems coincide.
\end{theorem}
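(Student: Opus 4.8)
The plan is to recognize this final theorem as the statement that the two constructions are mutually inverse at the level of partitions: passing from a top-graph partition $\mathcal F$ to the bottom-graph partition $\mathcal E$ (as in Theorem \ref{thm:lr-insplit=2g-insplit}, where $\mathcal E^i_z := p(\mathcal F^i_v)$), and then passing from $\mathcal E$ back to a top-graph partition via the formula of Theorem \ref{Thm: Main result III} (namely $\mathcal F^i_v := vF^1 \cap p^{-1}(\mathcal E^i_{p(v)})$), returns the original $\mathcal F$. Since in both theorems all the remaining data—$\wt F = F_{JM}$, $\wt E$, the maps $\wt p, \wt q$, and the $2$-graph insplitting partition $\mathcal G$—are defined by identical formulas in terms of $\mathcal F$ and $\mathcal E$, once the partitions are shown to agree the coincidence of the $2$-graphs $\Lambda_I$ and the textile systems $\wt T$ follows immediately. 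So the substance reduces to verifying statements (1) and (2).

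For statement (1), I would observe that the hypothesis of Theorem \ref{Thm: Main result III} is merely a repackaging of condition (b) of Theorem \ref{thm:lr-insplit=2g-insplit} under the identification $\mathcal E^j_z = p(\mathcal F^j_w)$. Condition (b) furnishes, for each $u \in F^0$, some $w$ and $j$ with $q(uF^1) \subseteq p(\mathcal F^j_w) = \mathcal E^j_{p(w)}$. Because $q$ is a graph homomorphism, every edge of $q(uF^1)$ has range $q(u)$, while every edge of $p(\mathcal F^j_w)$ has range $p(w)$; hence $z := p(w) = q(u)$ is forced, which is exactly the requirement $q(uF^1) \subseteq \mathcal E^j_{q(u)}$. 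Uniqueness of $j$ then follows since $\{\mathcal E^j_{q(u)}\}_j$ is a partition of $q(u)E^1$ (established in Proposition \ref{lr-insplit-2}), so $q(uF^1)$, being a nonempty subset of $q(u) E^1$, can lie in at most one block. For statement (2), the entire content is the unique $r$-path lifting of $p$: I must show $\mathcal F^i_v = vF^1 \cap p^{-1}(\mathcal E^i_{p(v)})$ where $\mathcal E^i_{p(v)} = p(\mathcal F^i_v)$. The inclusion $\subseteq$ is immediate from the definitions. For $\supseteq$, given $\lambda \in vF^1$ with $p(\lambda) \in p(\mathcal F^i_v)$, write $p(\lambda) = p(\mu)$ with $\mu \in \mathcal F^i_v \subseteq vF^1$; then $r(\lambda) = v = r(\mu)$ and $p(\lambda) = p(\mu)$, so unique $r$-path lifting yields $\lambda = \mu \in \mathcal F^i_v$. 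This is precisely Proposition \ref{prop:lr-ptn}(\ref{lr-ptn-1}) rephrased.

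Finally, for the ``Consequently'' clause, I would note that the $2$-graph insplitting partition is given by the identical formula $\mathcal G^i_z = \mathcal E^i_z \sqcup r_F(q^{-1}(\mathcal E^i_z))$ in Proposition \ref{lr-insplit-3} and in Theorem \ref{Thm: Main result III}; since the $\mathcal E$'s coincide, the $\mathcal G$'s coincide, and hence so do the resulting $2$-graphs $\Lambda_I$. Likewise $\wt F$, $\wt E$, $\wt p$, and $\wt q$ are specified by matching formulas in both theorems, so the textile systems $\wt T$ are identical. I do not expect a genuine obstacle here, as the result is a consistency check confirming that the notation chosen across the two constructions was coherent; the step requiring the most care is the verification in (1) that $z = q(u)$ is forced and that $j$ is genuinely unique, since it is tempting to read condition (b) as allowing an arbitrary target vertex.
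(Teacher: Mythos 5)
Your proposal is correct and follows essentially the same route as the paper: statement (1) is proved by noting that $q$ being a graph homomorphism forces $p(w)=q(u)$ and that partition blocks are disjoint (the paper likewise uses $r_E(q(uF^1))=q(u)$ plus disjointness from Proposition \ref{lr-insplit-2}), and statement (2) reduces to unique $r$-path lifting of $p$ — you unwind the lifting argument directly where the paper cites hypothesis (a) together with Proposition \ref{prop:lr-ptn}, but the content is identical. The ``Consequently'' clause is handled the same way in both: the formulas for $\mathcal G^i_z$, $\wt F$, $\wt E$, $\wt p$, $\wt q$ coincide once the partitions do.
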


\begin{proof}
Let $\{\mathcal{F}^i_v: v \in F^0, 1 \le i \le m(v)\}$ be a partition of $F^1$ as in Theorem \ref{thm:lr-insplit=2g-insplit}. 
 \begin{enumerate}
 \item Recall from Proposition \ref{lr-insplit-2} that 
$\{ \mathcal{E}^i_z := p(\F^i_v)\}_{i,z }$, where $z = p(v)$, is a well-defined partition of $E^1$. 
Fix $u \in F^0$. Then by hypothesis (b), there exists $w \in F^0$ and $1 \le j \le m(w)$ such that $q(uF^1) \subseteq p(\F^j_w) = \E^j_{p(w)}$. 
As $\E^j_{p(w)} \subseteq p(w)E^1$ and $r_E(q(uF^1)) = q(r_F(uF^1)) = q(u)$, we must have $q(u) = p(w)$.  As $\mathcal E$ is a partition and hence $\E^k_{q(u)} \cap \E^j_{q(u)} \ne \emptyset$ implies $k=j$, (1) holds.

 \item It is sufficient to prove that for all $v \in F^0$ and $1\le i\le m(v)$, 
 \[ \mathcal{F}^i_v = vF^1 \cap p^{-1}(p(\F^i_v)). \]
 Clearly, $\mathcal{F}^i_v \subseteq vF^1 \cap p^{-1}(p(\F^i_v))$. To prove the opposite containment, suppose that  $f \in vF^1 \cap p^{-1}(p(\F^i_v))$
 but $f \in \F^k_v$ for some $k$. 
 Then $p(f)  = p(\lambda)$ for some $\lambda \in \F^i_v$, i.e., $p(f) \in p(\F^i_v) \cap p(\F^k_v)$, so by hypothesis (a),  $p(\F^i_v) = p(\F^k_v)$. Proposition \ref{prop:lr-ptn} now tells us that $i = k$, as desired. 
 
 \end{enumerate}

The 2-graph insplitting partitions constructed in both Theorems are the same, by definition:
\begin{equation} \label{eq: 2-graph partition constructed}
\{ \mathcal{G}^i_z := \E^i_z \sqcup r_F(q^{-1}(\E_z^i)) : z \in E^0, 1\le i \le m(z) \}.
\end{equation}
 Finally, 
as we have already observed that the partitions used to construct $\wt F$ and $\wt E$ in Theorems \ref{thm:lr-insplit=2g-insplit} and \ref{Thm: Main result III} coincide, the graphs $\wt F, \wt E$ resulting from the two Theorems are the same.  Moreover, the two Theorems use the same definitions of $\tilde p, \tilde q$, thus, the two textile systems coincide.\qedhere

 \end{proof}

Lastly, we show that starting from the setting of Theorem \ref{Thm: Main result III}, we recover the results in Theorem \ref{thm:Priyanga}.

\begin{theorem}[Theorem \ref{Thm: Main result III} $\longrightarrow$ Theorem \ref{thm:Priyanga}]
Let $T= (p,q: F \to E)$ be an LR textile system with associated 2-graph $\Lambda$. Further, assume that $p$ is surjective and $F$ is source free.
Let $\{ \E^i_z: z \in E^0, 1 \le i \le m(z)\}$ be a partition of $E^1$ as in Theorem \ref{Thm: Main result III}, so that for each $u \in F^0$, there exist $ z = q(u) \in E^0$ and a unique $j \in \{ 1, 2, \ldots, m(z)\}$ such that $q(uF^1) \subseteq \E^j_z$. 

Define partitions of $F^1$ and $\Lambda^{1}$ as in Theorem \ref{Thm: Main result III}, namely: For each  $v \in F^0$ and $ z \in E^0$, let
\begin{equation}
\mathcal{F}^i_v := vF^1 \cap p^{-1}(\E^i_{p(v)}), \;\;\; \text{ for } 1\le i\le m(p(v)),
\label{eq:defne F from E}
\end{equation}
\begin{equation} \label{eq: defne G from E}
\mathcal{G}^i_z := \E^i_z \sqcup r_F(q^{-1}(\E_z^i)), \;\;\; \text{ for } 1\le i\le m(z).
\end{equation}

\noindent Then, 
\begin{enumerate}
\item The partition of $E^1$ constructed in Theorem \ref{thm:Priyanga} from the 2-graph partition \eqref{eq: defne G from E} agrees with the original partition $\{\mathcal{E}^i_z \}$, that is 
\[ \mathcal{E}^i_z = \mathcal{G}^i_z \cap z E^1. \]
\item The partition of $F^1$ constructed in Theorem \ref{thm:Priyanga} from $\{\mathcal{G}^i_z\}_{i,z}$ agrees with the partition of Equation \eqref{eq:defne F from E}, that is
\[ \mathcal{F}^i_v = \{ \lambda \in vF^1  :  p(\lambda) \in \mathcal{G}^i_{ p(v)} \}. \]
\item The constructions of the textile system $\wt{T} = (\tilde{p}, \tilde{q}: \wt{F}\to \wt{E})$ in Theorems \ref{thm:Priyanga} and  \ref{Thm: Main result III} coincide.
\end{enumerate}
\end{theorem}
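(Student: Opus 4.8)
My plan is to deduce all three assertions by unwinding the definitions, relying on two structural facts. The first is the disjoint decomposition $\Lambda^1 = \Lambda^{\varepsilon_1} \sqcup \Lambda^{\varepsilon_2}$, which under Remark \ref{rmk:Lambda-T-range-source} is identified with $E^1 \sqcup F^0$. The second is the hypothesis that for each $u \in F^0$ the set $q(uF^1)$ lies in a \emph{unique} member $\E^j_{q(u)}$ of the $E$-partition. Granting these, statements (1) and (2) are essentially formal, and the only step that calls for genuine care is the comparison of the two recipes for $\tilde q$ in statement (3).

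For (1), I would observe that $\E^i_z \sse zE^1 \sse \Lambda^{\varepsilon_1}$, whereas $r_F(q\inv(\E^i_z)) \sse F^0 = \Lambda^{\varepsilon_2}$. These two summands of $\G^i_z$ (as defined in \eqref{eq: defne G from E}) are disjoint inside $\Lambda^1$, so intersecting with $zE^1$ discards the $F^0$-part and returns exactly $\E^i_z$; that is, $\G^i_z \cap zE^1 = \E^i_z$, which is precisely the $E^1$-partition produced by the recipe of Theorem \ref{thm:Priyanga}. For (2), the crucial point is that any $\lambda \in vF^1$ satisfies $r_E(p(\lambda)) = p(r_F(\lambda)) = p(v)$, so $p(\lambda) \in p(v)E^1$. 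Hence $p(\lambda) \in \G^i_{p(v)}$ if and only if $p(\lambda) \in \G^i_{p(v)} \cap p(v)E^1 = \E^i_{p(v)}$, using (1). This identifies the Theorem \ref{thm:Priyanga} set $\{\lambda \in vF^1 : p(\lambda) \in \G^i_{p(v)}\}$ with $vF^1 \cap p\inv(\E^i_{p(v)}) = \F^i_v$ of \eqref{eq:defne F from E}.

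For (3), once the partitions $\E$ and $\F$ are matched, the insplit graphs $\wt E$ and $\wt F$ produced by the two theorems coincide, and the maps $\tilde p$ agree verbatim (both send $\lambda^i \mapsto p(\lambda)^i$ and $v^i \mapsto p(v)^i$). The remaining and most delicate verification, which I expect to be the main obstacle, is that the two prescriptions for $\tilde q$ select the same superscript. Concretely, for $v \in F^0$ I must show that the index $m$ with $v \in \G^m_{q(v)}$ (used in Theorem \ref{thm:Priyanga}) equals the index $j$ determined by $q(vF^1) \sse \E^j_{q(v)}$ (used in Theorem \ref{Thm: Main result III}). Since $F^0 \cap E^1 = \emptyset$, membership $v \in \G^m_{q(v)}$ reduces to $v \in r_F(q\inv(\E^m_{q(v)}))$, i.e.\ to the existence of some $\mu \in vF^1$ with $q(\mu) \in \E^m_{q(v)}$; such a $\mu$ exists because $F$ is source-free. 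As $q(\mu) \in q(vF^1) \sse \E^j_{q(v)}$ by hypothesis, we get $\E^m_{q(v)} \cap \E^j_{q(v)} \neq \emptyset$, and the partition property forces $m = j$. Applying this equivalence with $v = s(\lambda)$ likewise matches $\tilde q(\lambda^i)$ in the two constructions. Hence the textile systems $\wt T$ arising from Theorems \ref{thm:Priyanga} and \ref{Thm: Main result III} coincide, which completes the argument.
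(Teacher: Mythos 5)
Your proposal is correct and follows essentially the same route as the paper's proof: (1) and (2) by the disjointness of the $E^1$ and $F^0$ parts of $\mathcal{G}^i_z$ together with $p$ being a graph homomorphism, and (3) by showing $v \in \mathcal{G}^m_{q(v)}$ if and only if $q(vF^1) \subseteq \mathcal{E}^m_{q(v)}$, using source-freeness of $F$ for existence of $\mu \in vF^1$ and the uniqueness hypothesis on the partition to force the indices to agree. No gaps; this matches the paper's argument step for step.
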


\begin{proof}
Following the fact that $\mathcal{G}^i_z \cap z E^1 = \bigg(\E^i_z \sqcup r(q^{-1}(\E_z^i)) \bigg) \cap z E^1 = \E^i_z \cap z E^1 = \mathcal{E}^i_z$, we obtain (1).
For (2), we have $f \in \F^i_v$ if and only if $f \in \{ \lambda \in vF^1 :  p(\lambda) \in \E^i_{p(v)}\}$ which is equivalent to $f \in \{ \lambda \in vF^1 :  p(\lambda) \in \G^i_{p(v)}\}$ as $p(f)\in E^1$ but $p(f) \notin r_F(q^{-1}(\E_z^i))$.

For (3), it is sufficient to show that the maps $\tilde{q}$ defined in Theorem \ref{thm:Priyanga} and Theorem \ref{Thm: Main result III} are the same. To this end, we note that for $v \in F^0$,
\begin{eqnarray*}
v \in \mathcal{G}^m_{q(v)} & \iff & v \in r_F(q^{-1}(\E^m_{q(v)}))  \iff \exists \lambda \in vF^1 \text{ such that } q(\la) \in \E^m_{q(v)} \\
& \iff & q(vF^1) \subseteq \mathcal{E}^m_{q(v)},
\end{eqnarray*}
where the last equivalence follows from the hypothesis.
So, the definition of $\tilde{q}$ in Theorem \ref{thm:Priyanga} and Theorem \ref{Thm: Main result III} coincide, and hence the textile systems are the same.
 \end{proof}

\appendix
\newpage
\section{\texorpdfstring{$C^*$}-algebraic results}
\label{sec:$C^*$}

In this appendix, we discuss the $C^*$-algebraic implications of our work.  First, we show in Theorem \ref{thm:2graph-insplit-isom} that the $C^*$-algebra of a 2-graph is unchanged under a 2-graph insplitting; this result was established in \cite[Theorem 3.9]{EFGGGP} for the one-vertex-at-a-time version of 2-graph insplitting discussed in that paper.  We also show  that this isomorphism is gauge-invariant and diagonal-preserving.  Thus, \cite[Theorem 3.2]{carlsen-rout-kgraph} shows that 2-graph insplitting yields an eventual 1-sided conjugacy.  (In fact, as we have expressed 2-graph insplitting as a composition  of conjugacy-preserving moves on textile systems in  Theorem \ref{thm:6.1}, we have already proved the stronger result that 2-graph insplitting is a 1-sided conjugacy.)  We then define outsplitting for 2-graphs, as introduced in \cite{listhartke}, and explain why this paper has focused on linking the textile-system and 2-graph definitions of insplitting rather than outsplitting.

To simplify our calculations we will use an edge-based  definition of $2$-graph algebras which has seen some light in the literature, see \cite[Definition 2.1.1]{EFGGGP}, \cite[Theorem C.1]{rsy2}, \cite[Definition 7.4]{Kumjian-Pask-Sims-Homology} and alluded to in \cite[Remarks 1.6]{kp}. 

\begin{definition}[Edge definition of a $2$-graph $C^*$-algebra] \label{dfn:edgecstardef}
Let $\Lambda$ be a row-finite $2$-graph. An \emph{edge Cuntz--Krieger
$\Lambda$-family} in a $C^*$-algebra $A$ is a function $s : 
\Lambda^0 \cup \Lambda^{\varepsilon_1} \cup \Lambda^{\varepsilon_2} \to \text{PIsom}(A)$, denoted $x \mapsto s_x$, which assigns a partial isometry $s_x$ to each $x \in \Lambda^0 \cup \Lambda^{\varepsilon_1} \cup \Lambda^{\varepsilon_2} $ such that
\begin{itemize}
\item[(ECK1)] $\{s_v : v\in \Lambda^0\}$ is a collection of mutually orthogonal  projections;
\item[(ECK2)] $s_e s_f  = s_{f'} s_{e'}$ whenever $e,e' \in \Lambda^{\varepsilon_1}$, $f,f' \in \Lambda^{\varepsilon_2}$ and $e f \sim f' e'$;
\item[(ECK3)] $s^*_f s_f = s_{s(f)}$ for all $\lambda\in \Lambda^{\varepsilon_1 + \varepsilon_2}$; and
\item[(ECK4)] $s_v = \sum_{f \in v\Lambda^{\varepsilon_i}} s_f s_f^*$ for all  $v\in \Lambda^0$ and $i=1,2$.
\end{itemize}
The $C^*$-algebra $C^*(\Lambda)$ is the universal $C^*$-algebra generated by an edge Cuntz--Krieger $\Lambda$-family, and we often write $\{ t_\lambda: \lambda \in \Lambda^0 \cup \Lambda^{\varepsilon_1} \cup \Lambda^{\varepsilon_2}\}$ for the generators of the universal $C^*$-algebra $C^*(\Lambda)$.   That is, for any edge Cuntz--Krieger family $s: \Lambda^0 \cup \Lambda^{\varepsilon_1} \cup \Lambda^{\varepsilon_2} \to A$, there is a unique surjective $*$-homomorphism $\pi_s$ from $C^*(\Lambda) $ onto the $C^*$-algebra generated by the image of $s$, which satisfies $\pi_s(t_\lambda) = s_\lambda$ for all generators $t_\lambda$.\footnote{The existence of $C^*(\Lambda)$ can be proved by following for instance \cite[Proposition 1.21]{raeburn}.}
\end{definition}

\noindent
For general elements $\lambda \in \Lambda$, if $e_1 \cdots e_n$ is a representative of $\Lambda$ with each $e_j \in \Lambda^{\varepsilon_i}$ for some $i=1,2$, we define $t_\lambda = t_{e_1} \cdots t_{e_n}.$  Condition (ECK2) guarantees that $t_\lambda$ is independent of the choice of representative of $\lambda$. 

Because of the universal property of $C^*(\Lambda)$, there is a canonical action $\gamma$ of ${\mathbb T}^2$ on $C^*(\Lambda)$, called the \textit{gauge action}, which satisfies 
\[ 
\gamma_z (t_e) = z^{d(e)} t_e \qquad \text{ and } \qquad \gamma_z (t_v) = t_v
\]

\noindent
for all $z \in {\mathbb T}^2, e \in \Lambda^{\varepsilon_1} \sqcup \Lambda^{\varepsilon_2}$ and $ v \in \Lambda^0$. A standard $\varepsilon/3$ argument shows that this action is strongly continuous.

\begin{remarks}[Switching between edge-based and path-based models for $C^* ( \Lambda )$] \label{rmk:itsthesame}
One can extract from \cite{Kumjian-Pask-Sims-Twisted, Kumjian-Pask-Sims-Homology} a proof that Definition \ref{dfn:edgecstardef} and the original definition  \cite[Definitions 1.5]{kp} of $C^*(\Lambda)$ agree.  To be precise, 
Definition~\ref{dfn:edgecstardef} corresponds to the definition of Cuntz--Krieger $\iota$-family  given in \cite[Definition 7.4]{Kumjian-Pask-Sims-Homology}, where $\iota$ denotes the trivial cubical cocycle $\iota \in Z^2 ( \Lambda , \mathbb{T} )$, so the universal $C^*$-algebra of Definition \ref{dfn:edgecstardef} agrees with the $C^*$-algebra $C^*_\iota(\Lambda)$ of \cite[Definition 7.5]{Kumjian-Pask-Sims-Homology}. Then \cite[Theorem 3.16]{Kumjian-Pask-Sims-Twisted} identifies $\iota$ with the trivial categorical cocycle $c_\iota \in \underline{Z}^2 ( \Lambda , \mathbb{T} )$. Next \cite[Theorem 5.3]{Kumjian-Pask-Sims-Twisted} gives a gauge-equivariant isomorphism $\phi : C_\iota^* ( \Lambda ) \to C^* ( \Lambda , c_\iota )$. 
Finally, one checks that $C^* ( \Lambda , c_\iota ) $ defines the same object as the original $ C^* ( \Lambda )$ from \cite[Definitions 1.5]{kp}, as both are universal $C^*$-algebras with the same generators and relations. 
\end{remarks}

In order to link our work with that of Carlsen and Rout \cite{carlsen-rout-kgraph}, we recall that the {\em diagonal subalgebra} $\mathcal D(\Lambda)$ of $C^*(\Lambda)$ is the Abelian subalgebra densely spanned by $\{ t_\lambda t_\lambda^*: \lambda \in \Lambda\}.$
\begin{theorem}[$2$-graph insplitting induces gauge-invariant  isomorphism]
Let $\Lambda$ be an essential row-finite \textit{2}-graph with associated 
1-skeleton $(G_\Lambda ,c_\Lambda)$. Let $\E$ be an insplitting partition of $(G_\Lambda ,c_\Lambda)$ satisfying Definition \ref{dfn:pairing}. Let $\Lambda_I$ be the 2-graph resulting from insplitting $\Lambda$ with this partition.  Then there is a gauge-equivariant diagonal-preserving isomorphism $\pi_S : C^*(\Lambda ) \to C^*(\Lambda_I)$.
\label{thm:2graph-insplit-isom}
\end{theorem}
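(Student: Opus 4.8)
The plan is to build the isomorphism directly from the universal property of $C^*(\Lambda)$, prove injectivity via the gauge-invariant uniqueness theorem, and then establish surjectivity and diagonal-preservation by hand. Throughout I write $\{t_y : y \in \Lambda_I^0 \cup \Lambda_I^{\varepsilon_1} \cup \Lambda_I^{\varepsilon_2}\}$ for the generators of $C^*(\Lambda_I)$ (which is a $2$-graph $C^*$-algebra by Theorem \ref{thm:inplitcomplete}), and I denote the insplitting partition by $\mathcal{G} = \{\mathcal{G}_v^j\}$ as in Definitions \ref{def:2-graph-insplit}. First I would define a candidate edge Cuntz--Krieger $\Lambda$-family $\{S_x\}$ inside $C^*(\Lambda_I)$ by
\[ S_v := \sum_{j=1}^{m(v)} t_{v^j} \quad (v \in \Lambda^0), \qquad S_f := \sum_{i=1}^{m(s(f))} t_{f^i} \quad (f \in \Lambda^{\varepsilon_1} \cup \Lambda^{\varepsilon_2}). \]
Conditions (ECK1) and (ECK3) are immediate, since the $v^j$ are distinct vertices and $t_{f^i}^* t_{f^k} = \delta_{ik}\, t_{s(f)^i}$. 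The crucial relation is (ECK2): expanding $S_e S_f$ and using $s_I(e^i) = s(e)^i$ together with $r_I(f^k) = s(e)^{k'}$ for the unique $k'$ with $f \in \mathcal{G}_{s(e)}^{k'}$, only terms with matching source/range superscripts survive, and the definition of $\sim_I$ in Definitions \ref{def:2-graph-insplit} combined with the factorization property in $\Lambda$ shows $S_e S_f = S_{f'} S_{e'}$ whenever $ef \sim f'e'$. For (ECK4) I would apply the Cuntz--Krieger relation of $C^*(\Lambda_I)$ at each $v^j$, using that $v^j \Lambda_I^{\varepsilon_i} = \{ f^k : f \in v\Lambda^{\varepsilon_i} \cap \mathcal{G}_v^j,\ 1 \le k \le m(s(f))\}$. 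The universal property then produces a $*$-homomorphism $\pi_S : C^*(\Lambda) \to C^*(\Lambda_I)$ with $\pi_S(t_x) = S_x$.

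Gauge-equivariance I would verify on generators: since $d_I(f^i) = d(f)$, both $\pi_S \circ \gamma_z$ and $\widetilde\gamma_z \circ \pi_S$ (where $\widetilde\gamma$ is the gauge action of $C^*(\Lambda_I)$) send $t_f \mapsto z^{d(f)} S_f$ and $t_v \mapsto S_v$. Because $\pi_S(t_v) = S_v$ is a nonzero sum of orthogonal vertex projections, the gauge-invariant uniqueness theorem \cite{kp} (applicable as $\Lambda$ is essential, hence has no sources) yields injectivity of $\pi_S$.

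For surjectivity, the main point is to recover the individual vertex projections $t_{v^j}$, which are not directly among the $S_x$. Invoking the Cuntz--Krieger relation at $v^j$ in colour $1$, and recalling that $S_f S_f^* = \sum_k t_{f^k} t_{f^k}^*$ (cross terms vanishing since the $f^k$ have orthogonal sources), I would write $t_{v^j} = \sum_{f} S_f S_f^*$ with the sum over $f \in v\Lambda^{\varepsilon_1} \cap \mathcal{G}_v^j$. This requires each partition set $\mathcal{G}_v^j$ to contain a colour-$1$ edge, which follows from the pairing condition and the essentiality of $\Lambda$ (each $\mathcal{G}_v^j$ meets both colours, exactly as in the proof of Theorem \ref{thm:Priyanga}(3)). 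Once all $t_{v^j}$ lie in the image, the identity $t_{f^i} = S_f\, t_{s(f)^i}$ recovers every edge generator, so $\pi_S$ is onto. I expect this step---locating the vertex projections, and in particular the reliance on essentiality to guarantee edges of both colours in each $\mathcal{G}_v^j$---to be the principal obstacle.

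Finally, for diagonal-preservation I would first establish the structural identity $S_\lambda = \sum_{i=1}^{m(s(\lambda))} t_{\mu_i}$, where $\mu_i$ is the unique lift of $\lambda$ to $\Lambda_I$ with $s_I(\mu_i) = s(\lambda)^i$, proved by induction on $d(\lambda)$ from the (ECK2) computation. Since distinct $\mu_i$ have orthogonal sources, $\pi_S(t_\lambda t_\lambda^*) = S_\lambda S_\lambda^* = \sum_i t_{\mu_i} t_{\mu_i}^* \in \mathcal{D}(\Lambda_I)$, giving $\pi_S(\mathcal{D}(\Lambda)) \subseteq \mathcal{D}(\Lambda_I)$. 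Conversely, for $\mu \in \Lambda_I$ lying over $\lambda$ with $s_I(\mu) = s(\lambda)^i$, the relation $t_\mu = S_\lambda\, t_{s(\lambda)^i}$ gives $t_\mu t_\mu^* = S_\lambda\, t_{s(\lambda)^i}\, S_\lambda^* = \pi_S(t_\lambda\, d\, t_\lambda^*)$, where $d \in \mathcal{D}(\Lambda)$ is the preimage of $t_{s(\lambda)^i}$ exhibited in the surjectivity step; as $t_\lambda \mathcal{D}(\Lambda) t_\lambda^* \subseteq \mathcal{D}(\Lambda)$, this shows $\mathcal{D}(\Lambda_I) \subseteq \pi_S(\mathcal{D}(\Lambda))$. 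Together these give $\pi_S(\mathcal{D}(\Lambda)) = \mathcal{D}(\Lambda_I)$, completing the proof that $\pi_S$ is a gauge-equivariant, diagonal-preserving isomorphism.
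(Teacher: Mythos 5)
Your proposal is correct and follows essentially the same route as the paper's proof: the same candidate family $S_v = \sum_j t_{v^j}$, $S_f = \sum_i t_{f^i}$, verification of (ECK1)--(ECK4), the universal property plus the gauge-invariant uniqueness theorem for injectivity, recovery of the $t_{v^j}$ (and then $t_{f^i} = S_f t_{s(f)^i}$) for surjectivity, and the two-sided diagonal computation via the lifts of each $\lambda \in \Lambda$. The only differences are cosmetic: your formula $t_{v^j} = \sum_{f \in v\Lambda^{\varepsilon_1} \cap \mathcal G^j_v} S_f S_f^*$ omits the paper's (redundant) pre-multiplication by $S_v$, and your converse diagonal step packages the paper's explicit sum manipulation as $t_\mu t_\mu^* = \pi_S(t_\lambda d\, t_\lambda^*)$ with $t_\lambda \mathcal D(\Lambda) t_\lambda^* \subseteq \mathcal D(\Lambda)$.
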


\begin{proof}
Let $\{ s_v , s_f :v \in \Lambda^0 ,  f \in  \Lambda^{\varepsilon_1} \sqcup \Lambda^{\varepsilon_2}  \}$ be an edge Cuntz-Krieger $\Lambda$-family generating $C^* ( \Lambda )$ 
and $\{ t_{v^j} , t_{f^j} : v^j \in \Lambda_I^0, f^j \in \Lambda_I^{\varepsilon_1} \sqcup \Lambda_I^{\varepsilon_2} \}$ be an edge Cuntz-Krieger $\Lambda_I$-family generating $C^* ( \Lambda_I )$. For $v \in \Lambda^0$ and $f \in \Lambda^{\varepsilon_1} \sqcup \Lambda^{\varepsilon_2}$ set 
\[
S_v = \sum_{1 \le i \le m(v)} t_{v^i} 
\ \text{  and  } \ S_f = \sum_{1 \le j \le m(s(f))} t_{f^j}
.
\]

\noindent
We claim that the family $\{ S_v , S_f : v \in \Lambda^0 , f \in \Lambda^{\varepsilon_1} \sqcup \Lambda^{\varepsilon_2} \}$ is an edge Cuntz-Krieger $\Lambda$-family in $C^* ( \Lambda_I )$. To do this we first observe that each $S_f$ is a partial isometry since it is a finite sum of partial isometries with different source projections (cf.\ \cite[Lemma 2.1]{Maloney-Pask-Raeburn}).

Next we check that this family $\{ S_f, S_v\}$ satisfies the relations (ECK1)--(ECK4). 

We begin with (ECK1). The $S_v$'s are non-zero mutually orthogonal projections since they are sums of projections satisfying the same properties. For (ECK3) let $f \in \Lambda^{\varepsilon_i}$, $i=1,2$. 
Since the generators $t_{v^j}, t_{f^j}$ of $C^*(\Lambda_I)$ satisfy (ECK1) and (ECK3),
\begin{align*}
S_f^* S_f &= \left( \sum_{1 \le j \le m(s(f))} t_{f^j} \right)^* \left( \sum_{1 \le j \le m(s(f))} t_{f^j} \right) \\
&= \sum_{1 \le j \le m(s(f))} t_{f^j}^* t_{f^j} \text{ since } s ( f^i ) = s(f)^i \neq s(f)^j = s ( f^j ) \text{ if} \ i \neq j \\
&= \sum_{1 \le j \le m(s(f))} t_{s(f)^j} =  S_{s(f)} \text{ by definition.}
\end{align*}

\noindent
For (ECK2) let $g , g'\in \Lambda^{\varepsilon_1}$ and $h, h' \in \Lambda^{\varepsilon_2}$ be such that $gh \sim  h'g'$ in $\Lambda$. Suppose that $h \in \mathcal E_{s(g)}^\ell$ and $g' \in \mathcal E_{s(h')}^m$. Then the fact that $T := \{ t_x: x \in \Lambda_I^0 \cup \Lambda_I^{\varepsilon_1} \cup \Lambda_I^{\varepsilon_2}\}$ is a family of partial isometries satisfying (ECK1)  allows us to conclude that
\[t_{g^j} t_{h^k} =t_{g^j} t_{s(g)^j} t_{r(h)^\ell} t_{h^k}\]
is zero unless $j= \ell$. Moreover, the definition of $\sim_I$, together with the fact that $T$ satisfies (ECK2), implies that $t_{g^\ell} t_{h^k} = t_{(h')^m} t_{(g')^k}. $ Since $gh \sim h'g'$ implies in particular that $s(h) = s(g')$, 
\begin{align*}
S_g S_h &= \left( \sum_{1 \le j \le m(s(g))} t_{g^j} \right) \left( \sum_{1 \le k \le m(s(h))} t_{h^k} \right) = \sum_{1\leq k \leq m(s(h))} t_{g^\ell} t_{h^k} \\
&= \sum_{1 \le k \le m(s(g'))} t_{(h')^m} t_{(g')^k} = \sum_{1\leq j' \leq m(s(h'))} t_{(h')^{j'}}\sum_{1 \le k \le m(s(g'))} t_{(g')^k} \\
&= S_{h'} S_{g'}. 
\end{align*}

\noindent
For (ECK4), 
recall that every $g \in \Lambda_I^{\varepsilon_i}$ is of the form $g = f^k$ for some $f \in \Lambda^{\varepsilon_i}$, and that $r_{\Lambda_I}(f^k) = r_\Lambda(f)^j$ if $f \in \mathcal E^j_{r(f)}$.  Therefore, for any $v \in \Lambda^0$ and $i=1,2$ we can write 
\[ 
v\Lambda_I^{\varepsilon_i} = \bigcup_{1\leq j \leq m(v)} \bigcup_{f \in \mathcal E^j_{v}} \{ f^k: 1 \leq k \leq m(s_\Lambda(f))\}.\]
Moreover, since $t_{f^k} = t_{f^k} t_{s(f)^k}$ by (ECK3), we have $t_{f^k} t_{f^\ell }^* = \delta_{\ell,k} t_{f^k} t_{f^\ell}^*$ by (ECK1).
It follows that 
\begin{align*}
S_v = \sum_{1 \le j \le m(v)} t_{v^j} &= \sum_{1 \le j \le m(v)} \sum_{f \in \E_v^j} \sum_{1 \leq k \leq m(s(f))} t_{f^k} t_{f^k}^* \text{ by (ECK4) in $C^* ( \Lambda_I )$} \\
&= 
\sum_{f \in v\Lambda^{\varepsilon_i} } \sum_{1\leq k \leq m(s(f))} t_{f^k} t_{f^k}^* = \sum_{f\in v\Lambda^{\varepsilon_i}} \sum_{1 \leq k, \ell \leq m(s(f))} t_{f^k} t_{f^\ell}^* \\
&= \sum_{f\in v\Lambda^{\varepsilon_i}} S_f S_f^*,
\end{align*}
so $\{ S_v, S_f \}$ satisfies (ECK4). By the universal property of $C^* ( \Lambda )$ there is a  $*$-homomorphism $\pi_{S} : C^* ( \Lambda ) \to C^* ( \Lambda_I )$ taking $s_x$ to $S_x$ for all $x \in \Lambda^0 \cup \big( \Lambda^{\varepsilon_1} \sqcup \Lambda^{\varepsilon_2} \big)$. 
Moreover, the definition of $S_x$ ensures that $\pi_S$ is gauge-equivariant.

We claim that $\pi_S$ is onto. To do this we show that the generators of $C^* ( \Lambda_I )$ lie in the image $C^* ( \{ S_x : x \in \Lambda^0 \cup (\Lambda^{\varepsilon_1} \sqcup \Lambda^{\varepsilon_2})\} )$ of $\pi_S$. 

Fix $v^j$, where $1 \le j \le m(v)$. Recall that, for any $i = 1,2$, 
\[ v^j \Lambda_I^{\varepsilon_i} = \{ f^k: d(f) = \varepsilon_i, f \in \E^j_v, 1 \leq k \leq m(s(f))\}.
\]
Using the fact that the generators of $C^*(\Lambda_I)$ satisfy (ECK4), we conclude that for each $i=1,2,$
\[
S_v \left( \sum_{f \in \E^j_v \cap \Lambda^{\varepsilon_i}} S_f S_f^* \right) = \left( \sum_{1 \le k \le m(v)} t_{v^k} \right) \left( \sum_{f \in \E^j_v \cap \Lambda^{\varepsilon_i}} \sum_{1 \le \ell  \le m(s(f))} t_{f^\ell} t_{f^\ell}^* \right) = t_{v^j} \sum_{ g \in v^j \Lambda_I^{\varepsilon_i}} t_{g} t_{g}^*= t_{v^j} ,
\]
so each generator $t_{v^j}$ of $C^*(\Lambda_I)$ lies in the image of $\pi_S$.

Applying (ECK1) now implies that, for any $1\leq j \leq m(s(f))$,
\[ t_{f^j} = t_{f^j} t_{s(f)^j} = \left( \sum_{1\leq k \leq m(s(f))} t_{f^k} \right) t_{s(f)^j} = S_f t_{s(f)^j} \]
also lies in the image of $\pi_S$.  We conclude that $\pi_S: C^*(\Lambda) \to C^*(\Lambda_I)$ is onto.

To see that $\pi_S$ is injective, we invoke the gauge-invariant uniqueness theorem \cite[Theorem 3.4]{kp}. Since $\pi_S$ is gauge-equivariant, it is injective because  $S_v$ is nonzero for any $v \in \Lambda^0$, thanks to the universal property of $C^*(\Lambda_I)$.  That is, $\pi_S$ is a $*$-isomorphism.

Finally, we check that $\pi_S(\mathcal D(\Lambda)) = \mathcal D(\Lambda_I).$  Given $\lambda \in \Lambda$, choose a representative $e_1 \cdots e_n$ of $\Lambda$ with each $e_j \in \Lambda^{\varepsilon_i}$ for some $i=1,2$.  As $t_{e_j^k} t_{e_{j+1}^m} = 0$ unless $e_{j+1} \in \mathcal E_{s(e_j)}^k,$ 
\[ S_{e_1} \cdots S_{e_n} = \sum_{1\leq \ell \leq m(s(e_n))} t_{e_1^{m_1}} \cdots t_{e_{n-1}^{m_{n-1}}} t_{e_n^\ell},\]
where $e_j \in \mathcal E^{m_{j-1}}_{r(e_j)}$ for all $2 \leq j \leq n.$  By (ECK1) and (ECK3), $t_{e_n^\ell} t_{e_n^k}^* = \delta_{\ell, k} t_{e_n^\ell} t_{e_n^\ell}^*$.  Thus,   
\[ S_\lambda S_\lambda^* = (S_{e_1} \cdots S_{e_n})(S_{e_1} \cdots S_{e_n})^* = \sum_{1\leq \ell \leq m(s(e_n))} t_{e_1^{m_1}} \cdots t_{e_{n-1}^{m_{n-1}}} t_{e_n^\ell}(t_{e_1^{m_1}} \cdots t_{e_{n-1}^{m_{n-1}}} t_{e_n^\ell})^*\]
lies in $\mathcal D(\Lambda_I)$, being a finite sum of elements of the form $t_\lambda t_\lambda^*.$  As every $*$-isomorphism is norm-preserving, we conclude that $\pi_S(\mathcal D(\Lambda)) \subseteq \mathcal D(\Lambda_I).$ 

To show that $\pi_S(\mathcal D(\Lambda))= \mathcal D(\Lambda_I)$, since $\pi_S$ is norm-preserving it suffices to show that $t_\lambda t_\lambda^* \in \pi_S(\mathcal D(\Lambda))$ for all $\lambda \in \Lambda_I$.  So, fix $\lambda = e_1^{j_1} e_2^{j_2} \cdots e_n^{j_n} \in \Lambda_I$, and observe that since $s(e_n^{j_n}) = s(e_n)^{j_n},$
\[ s(\lambda) \Lambda_I^{\varepsilon_i} = \{ e^\ell: e \in \Lambda^{\varepsilon_i} \cap \mathcal E_{s(e_n)}^{j_n} , \ 1 \leq \ell \leq m(s(e))\}. \]
Therefore, by (ECK4), 
\[ t_\lambda = \sum_{e \in \Lambda^{\varepsilon_i} \cap \mathcal E_{s(e_n)}^{j_n} } \sum_{1\leq \ell \leq m(s(e))} t_\lambda t_{e^\ell} t_{e^\ell}^* = \sum_{e \in \Lambda^{\varepsilon_i} \cap \mathcal E_{s(e_n)}^{j_n} } t_\lambda S_e S_e^*.\]
As above, recall that for all $1 \leq i \leq n-1$, there is a unique $1 \leq j_i \leq m(s(e_i))$ so that $t_{e_1}^{j_1} \cdots t_{e_n}^{j_n} \not= 0$ [namely, $j_i = m$ iff $e_{i+1} \in \mathcal E^m_{s(e_i)}$].  Consequently, using the fact that $s(e_n^k) = s(e_n)^k,$
\[ \sum_{e \in \Lambda^{\varepsilon_i} \cap \mathcal E_{s(e_n)}^{j_n} } S_{e_1} \cdots S_{e_n} S_e  ( S_{e_1} \cdots S_{e_n} S_e )^* = \sum_{e \in \Lambda^{\varepsilon_i} \cap \mathcal E_{s(e_n)}^{j_n} } t_\lambda S_e S_e^* t_\lambda^* = t_\lambda t_\lambda^* \in \pi_S(\mathcal D(\Lambda)). \]
We conclude that $\pi_S$ is diagonal preserving, as claimed.
\end{proof}

\subsection{What about outsplitting?}
The following definition was introduced in \cite[Definition 1.5.1]{listhartke}.

\begin{definition}[2-graph outsplitting]
Let $\Lambda$ be a row-finite source-free 2-graph. Let $G= (\Lambda^0, 
\Lambda^1, r, s)$ be its 1-skeleton. An \emph{outsplitting partition} of $G$ is a partition $\mathcal{E}= \{\mathcal{E}_w: w\in \Lambda^0\}$ of $\Lambda^{\epsilon_1}\sqcup \Lambda^{\epsilon_2}$, where $\mathcal{E}_w = \{ \E_w^1, \ldots, \E_w^{m(w)}\} $ is a partition of $s^{-1}(w)\cap \Lambda^1$ satisfying the following analogue of the pairing condition: 
\[ \text{ if } f, g \in s^{-1}(w)\cap \Lambda^1 \text{ and } \exists a, b \in G^1 \text{ s.t. } af \sim bg, \text{ then } f \in \mathcal E_w^j \iff g \in \mathcal E_w^j.\]

We then define a 2-colored graph $((\Lambda_O^0, \Lambda_O^1, r_O, s_O), d)$ where 
\begin{itemize}
    \item $\Lambda_O^0= \{v^j: v\in \Lambda^0, 1 \leq j \leq m(v)\},$
    \item $\Lambda_O^{\epsilon_i}= \{f^j: f\in \Lambda^{\epsilon_i}, 1\leq j \leq m(r(f)) \}, $
    \item $s_{\Lambda_O}(f^j)= s(f)^k$ if $f \in \mathcal E_{s(f)}^k,$
   \item $r_{\Lambda_O}(f^j)= r(f)^j$,
   \item $d(f^j) = d(f).$
\end{itemize}    
The commuting squares in $\Lambda_O$ are given by 
\[ f^i g^j \sim_O a^i b^k \iff fg \sim ab , f \in \mathcal E^j_{r(g)}, a \in \mathcal E^k_{r(b)}.\]
\end{definition}
An argument analogous to Theorem \ref{thm:inplitcomplete} (cf.~also \cite[Claim 1.5.1]{listhartke}) will show that $\Lambda_O$ is a 2-graph.

In general, by \cite[Theorem 4.0.1]{listhartke}, 2-graph outsplitting results in a stable isomorphism of $C^*$-algebras but not an on-the-nose isomorphism $C^*(\Lambda) \cong C^*(\Lambda_O)$. 
Therefore, by \cite[Theorem 3.2]{carlsen-rout-kgraph}, outsplitting (even for directed graphs; see \cite[Example 5.1]{bates-pask}) does not give rise to an eventual conjugacy, and in particular does not give rise to a conjugacy.  In particular, we can't expect to describe 2-graph outsplitting in terms of Johnson--Madden outsplitting, or other related moves.

\bibliographystyle{amsalpha}
\bibliography{eagbib}

\newcommand{\etalchar}[1]{$^{#1}$}
\providecommand{\bysame}{\leavevmode\hbox to3em{\hrulefill}\thinspace}
\providecommand{\MR}{\relax\ifhmode\unskip\space\fi MR }
\providecommand{\MRhref}[2]{%
  \href{http://www.ams.org/mathscinet-getitem?mr=#1}{#2}
}
\providecommand{\href}[2]{#2}
\begin{thebibliography}{HRSW13}

\bibitem[ABCE23]{ABCE}
B.~Armstrong, K.~A. Brix, T.~M. Carlsen, and S.~Eilers, \emph{Conjugacy of
  local homeomorphisms via groupoids and {${\rm C}^*$}-algebras}, Ergodic
  Theory Dynam. Systems \textbf{43} (2023), no.~8, 2516--2537.

\bibitem[AER18]{aer-1}
S.~E. Arklint, S.~Eilers, and E.~Ruiz, \emph{A dynamical characterization of
  diagonal-preserving *-isomorphisms of graph {$C^*$}-algebras}, Ergodic Theory
  Dynam. Systems \textbf{38} (2018), no.~7, 2401--2421.

\bibitem[AER22]{aer-2}
\bysame, \emph{Geometric classification of isomorphism of unital graph
  {$C^*$}-algebras}, New York J. Math. \textbf{28} (2022), 927--957.

\bibitem[Aso00]{Aso}
H.~Aso, \emph{Conjugacy of ${\ZZ}^2$-subshifts and textile systems}, Publ.
  RIMS.\ Kyoto Univ. \textbf{36} (2000), 1--18.

\bibitem[BC20a]{brix-carlsen-2}
K.~A. Brix and T.~M. Carlsen, \emph{Cuntz-{K}rieger algebras and one-sided
  conjugacy of shifts of finite type and their groupoids}, J. Aust. Math. Soc.
  \textbf{109} (2020), no.~3, 289--298.

\bibitem[BC20b]{brix-carlsen}
\bysame, \emph{{$\rm C^*$}-algebras, groupoids and covers of shift spaces},
  Trans. Amer. Math. Soc. Ser. B \textbf{7} (2020), 134--185.

\bibitem[BCW17]{brownlowe-carlsen-whittaker}
N.~Brownlowe, T.M. Carlsen, and M.F. Whittaker, \emph{Graph algebras and orbit
  equivalence}, Ergodic Theory and Dynamical Systems \textbf{37} (2017), no.~2,
  389--417.

\bibitem[BHRS02]{bhrs}
T.~Bates, J.-H. Hong, I.~Raeburn, and W.~Szyma\'nski, \emph{The ideal structure
  of the ${C}^*$-algebras of infinite graphs}, Illinois J. Math. \textbf{46}
  (2002), no.~4, 1159--1176.

\bibitem[BP04]{bates-pask}
T.~Bates and D.~Pask, \emph{Flow equivalence of graph algebras}, Ergodic Theory
  Dynam. Systems \textbf{24} (2004), no.~2, 367--382.

\bibitem[Bri22]{brix-balanced}
K.~A. Brix, \emph{Balanced strong shift equivalence, balanced in-splits, and
  eventual conjugacy}, Ergodic Theory Dynam. Systems \textbf{42} (2022), no.~1,
  19--39.

\bibitem[CDOE24]{carlsen-doron-eilers}
T.~M. Carlsen, A.~Dor-On, and S.~Eilers, \emph{Shift equivalences through the
  lens of {C}untz-{K}rieger algebras}, Anal. PDE \textbf{17} (2024), no.~1,
  345--377.

\bibitem[CEOR19]{CEOR}
T.~M. Carlsen, S.~Eilers, E.~Ortega, and G.~Restorff, \emph{Flow equivalence
  and orbit equivalence for shifts of finite type and isomorphism of their
  groupoids}, J. Math. Anal. Appl. \textbf{469} (2019), no.~2, 1088--1110.

\bibitem[CK80]{cuntz-krieger}
J.~Cuntz and W.~Krieger, \emph{A class of {$C\sp{\ast} $}-algebras and
  topological {M}arkov chains}, Invent. Math. \textbf{56} (1980), no.~3,
  251--268.

\bibitem[CR17]{carlsen-rout}
T.~M. Carlsen and J.~Rout, \emph{Diagonal-preserving gauge-invariant
  isomorphisms of graph {$C^\ast$}-algebras}, J. Funct. Anal. \textbf{273}
  (2017), no.~9, 2981--2993.

\bibitem[CR21]{carlsen-rout-kgraph}
\bysame, \emph{Orbit equivalence of higher-rank graphs}, J. Operator Theory
  \textbf{86} (2021), no.~2, 395--424.

\bibitem[Cun81]{cuntz-K-thy}
J.~Cuntz, \emph{A class of ${C}^*$-algebras and topological {M}arkov chains.
  {II}. reducible chains and the {E}xt-functor for ${C}^*$-algebras}, Invent.
  Math. \textbf{63} (1981), no.~1, 25--40.

\bibitem[EFG{\etalchar{+}}22]{EFGGGP}
C.~Eckhardt, K.~Fieldhouse, D.~Gent, E.~Gillaspy, I.~Gonzales, and D.~Pask,
  \emph{Moves on $k$-graphs preserving {M}orita equivalence}, Canad. J. Math.
  \textbf{74} (2022), 655--685.

\bibitem[ERRS17]{errs-cuntz}
S.~Eilers, G.~Restorff, E.~Ruiz, and A.~P.~W. S\o{}rensen, \emph{Invariance of
  the {C}untz splice}, Math. Ann. \textbf{369} (2017), no.~3-4, 1061--1080.

\bibitem[ERRS21]{errs}
S.~Eilers, G.~Restorff, E.~Ruiz, and A.P.~W. S\o{}rensen, \emph{The complete
  classification of unital graph {$C^*$}-algebras: geometric and strong}, Duke
  Math. J. \textbf{170} (2021), no.~11, 2421--2517.

\bibitem[ES12]{evans-sims}
D.~G. Evans and A.~Sims, \emph{When is the {C}untz-{K}rieger algebra of a
  higher-rank graph approximately finite-dimensional?}, J. Funct. Anal.
  \textbf{263} (2012), no.~1, 183--215.

\bibitem[FGKP15]{FGKP}
C.~Farsi, E.~Gillaspy, S.~Kang, and J.~Packer, \emph{Separable representations,
  {KMS} states, and wavelets for higher-rank graphs}, J. Math. Anal. Appl.
  \textbf{434} (2015), no.~1, 241--270.

\bibitem[FGLP21]{FGLP}
C.~Farsi, E.~Gillaspy, N.S. Larsen, and J.~Packer, \emph{Generalized gauge
  actions on $k$-graph ${C}^*$-algebras: {KMS} states and {H}ausdorff
  structure}, Indiana Univ. Math. J. \textbf{70} (2021), 669--709.

\bibitem[GPS95]{giordano-putnam-skau}
T.~Giordano, I.F. Putnam, and C.F. Skau, \emph{Topological orbit equivalence
  and {$C^*$}-crossed products}, J. Reine Angew. Math. \textbf{469} (1995),
  51--111.

\bibitem[HLRS14]{aHLRS14}
A.~an Huef, M.~Laca, I.~Raeburn, and A.~Sims, \emph{{KMS} states on
  {$C^*$}-algebras associated to higher-rank graphs}, J. Funct. Anal.
  \textbf{266} (2014), 265--283.

\bibitem[HLRS15]{aHLRS}
A.~an Huef, M.~Laca, I.~Raeburn, and A.~Sims, \emph{{KMS} states on the
  ${C}^*$-algebra of a higher-rank graph and periodicity in the path space}, J.
  Funct. Anal. \textbf{268} (2015), 1840--1875.

\bibitem[HRSW13]{hazle-raeburn-sims-webster}
R.~Hazlewood, I.~Raeburn, A.~Sims, and S.B.G. Webster, \emph{Remarks on some
  fundamental results about higher-rank graphs and their {$C^*$}-algebras},
  Proc. Edinb. Math. Soc. (2) \textbf{56} (2013), no.~2, 575--597.

\bibitem[HS04]{hong-syman}
J.-H. Hong and W.~Szyma\'nski, \emph{The primitive ideal space of the
  ${C}^*$-algebras of infinite graphs}, Journal of the Mathematical Society of
  Japan \textbf{56} (2004), 45--64.

\bibitem[JM99]{johnson-madden}
A.~S.~A. Johnson and K.~M. Madden, \emph{The decomposition theorem for
  two-dimensional shifts of finite type}, Proc. Amer. Math. Soc. \textbf{127}
  (1999), no.~5, 1533--1543.

\bibitem[Kit98]{Kitchens}
B.P. Kitchens, \emph{Symbolic dynamics: One-sided, two-sided and countable
  state markov shifts}, Universitext, Springer-Verlag, Berlin, 1998.

\bibitem[KP00]{kp}
A.~Kumjian and D.~Pask, \emph{Higher rank graph ${C}^*$-algebras}, New York J.
  Math. \textbf{6} (2000), 1--20.

\bibitem[KP03]{KumjianPask3}
A.~Kumjian and D.~Pask, \emph{Actions of $\mathbb{Z}^k$ asscoated to
  higher-rank graphs}, Ergod.\ Thy.\ \& Dynam.\ Sys. \textbf{23} (2003),
  1153--1172.

\bibitem[KPR98]{kpr}
A.~Kumjian, D.~Pask, and I.~Raeburn, \emph{{C}untz-{K}rieger algebras of
  directed graphs}, Pacific J. Math. \textbf{184} (1998), no.~1, 161--174.

\bibitem[KPRR97]{kprr}
A.~Kumjian, D.~Pask, I.~Raeburn, and J.~Renault, \emph{Graphs, groupoids and
  {C}untz-{K}rieger algebras}, J. Funct. Anal. \textbf{144} (1997), 505--541.

\bibitem[KPS12]{Kumjian-Pask-Sims-Homology}
A.~Kumjian, D.~Pask, and A.~Sims, \emph{Homology for higher-rank graph
  ${C}^*$-algebras and twisted higher-rank graph $c^*$-algebras}, J.\ Funct.\
  Anal. \textbf{263} (2012), 1539--1574.

\bibitem[KPS15]{Kumjian-Pask-Sims-Twisted}
\bysame, \emph{On twisted higher rank graph ${C}^*$-algebras}, Trans.\ Amer.\
  Math.\ Soc. \textbf{367} (2015), 5177--5126.

\bibitem[Lis24]{listhartke}
B.~Listhartke, \emph{${C}^*$-equivalences of $k$-graph and $\mathbb{N}$-graph
  algebras through graph transformations}, Ph.D. thesis, Kansas State
  University, 2024.

\bibitem[LM95]{lind-marcus}
D.~Lind and B.~Marcus, \emph{An introduction to symbolic dynamics and coding},
  Cambridge University Press, Cambridge, 1995.

\bibitem[LP10]{Lewin-Pask}
P.~Lewin and D.~Pask, \emph{Simplicity of $2$-graph algebras associated to
  dynamical systems}, Bull.\ Malaysian Math.\ Sci.\ Soc. \textbf{33} (2010),
  177--196.

\bibitem[Mat10]{matsumoto}
K.~Matsumoto, \emph{Orbit equivalence of topological {M}arkov shifts and
  {C}untz-{K}rieger algebras}, Pacific J. Math. \textbf{246} (2010), no.~1,
  199--225.

\bibitem[ML75]{maclane}
S.~Mac~Lane, \emph{Homology}, Springer-Verlag, 1975.

\bibitem[MM14]{matsumoto-matui}
K.~Matsumoto and H.~Matui, \emph{Continuous orbit equivalence of topological
  {M}arkov shifts and {C}untz-{K}rieger algebras}, Kyoto J. Math. \textbf{54}
  (2014), no.~4, 863--877.

\bibitem[MPR14]{Maloney-Pask-Raeburn}
B.~Maloney, D.~Pask, and I.~Raeburn, \emph{Skew products of higher-rank graphs
  and crossed products by semigroups}, Semigroup Forum \textbf{88} (2014),
  162--176.

\bibitem[MRS92]{mann-raeburn-sutherland}
M.~H. Mann, Iain Raeburn, and C.~E. Sutherland, \emph{Representations of finite
  groups and {C}untz-{K}rieger algebras}, Bull. Austral. Math. Soc. \textbf{46}
  (1992), no.~2, 225--243.

\bibitem[Nas95]{nasu}
M.~Nasu, \emph{Textile systems for endomorphisms and automorphisms of the
  shift}, Mem. Amer. Math. Soc. \textbf{114} (1995), no.~546, viii+215.

\bibitem[PQR04]{pask-quigg-raeburn}
D.~Pask, J.~Quigg, and I.~Raeburn, \emph{Fundamental groupoids of
  {$k$}-graphs}, New York J. Math. \textbf{10} (2004), 195--207.

\bibitem[PRW09]{pask-raeburn-weaver}
D.~Pask, I.~Raeburn, and N.~A. Weaver, \emph{A family of 2-graphs arising from
  two-dimensional subshifts}, Ergodic Theory Dynam. Systems \textbf{29} (2009),
  no.~5, 1613--1639.

\bibitem[Rae05]{raeburn}
I.~Raeburn, \emph{Graph algebras}, CBMS Regional Conference Series in
  Mathematics, vol. 103, Published for the Conference Board of the Mathematical
  Sciences, Washington, DC; by the American Mathematical Society, Providence,
  RI, 2005.

\bibitem[RS04]{raeburn-szyman}
I.~Raeburn and W.~Szyma\'nski, \emph{{C}untz-{K}rieger algebras of infinite
  graphs and matrices}, Trans. Amer. Math. Soc. \textbf{356} (2004), no.~1,
  39--59.

\bibitem[RSY03]{RSY1}
I.~Raeburn, A.~Sims, and T.~Yeend, \emph{Higher-rank graphs and their
  ${C}^*$-algebras}, Proc. Edinburgh Math. Soc. \textbf{46} (2003), 99--115.

\bibitem[RSY04]{rsy2}
\bysame, \emph{The ${C}^*$-algebras of finitely aligned higher-rank graphs}, J.
  Funct. Anal. \textbf{213} (2004), 206--240.

\bibitem[Sch98]{Schmidt}
K.~Schmidt, \emph{Tilings, fundamental cocyles and fundamental groups of
  symbolic ${\ZZ}^d$-actions}, Ergod.\ Th.\ \& Dynam,\ Sys. \textbf{18} (1998),
  1473--1525.

\bibitem[S\o13]{sorensen-first}
A.P.W. S\o{}rensen, \emph{Geometric classification of simple graph algebras},
  Ergodic Theory Dynam. Systems \textbf{33} (2013), no.~4, 1199--1220.

\bibitem[SZ08]{Skalski-Zacharias}
A.~Skalski and J.~Zacharias, \emph{Entropy of shifts on higher-rank graph $c^*$
  -algebras}, Houston J.\ Math. \textbf{34} (2008), 269--282.

\bibitem[Tan13]{Yuxiang-Tang}
Y.~Tang, \emph{Tiling systems and $2$-graphs associated to textile systems},
  Ph.D. thesis, University of Wollongong, 2013,
  \url{http://ro.uow.edu.au/theses/4003}.

\end{thebibliography}
\end{document}